\newtheorem{defn}{Definition}[section]
\newtheorem{corollary}[defn]{Corollary}
\newtheorem{lemma}[defn]{Lemma}
\newtheorem{thm}[defn]{Theorem}
\newtheorem{theorem}[defn]{Theorem}
\newtheorem{cor}[defn]{Corollary}
\newtheorem{prop}[defn]{Proposition}
\newtheorem*{theorem*}{Theorem}
\theoremstyle{definition}
\newtheorem*{ack}{Acknowledgements}
\newtheorem{remark}[defn]{Remark}
\newcommand{\CC}{\mathbb C}
\newcommand{\Q}{\mathbb Q}
\newcommand{\Z}{\mathbb Z}
\newcommand{\OK}{\mathcal{O}_K}
\newcommand{\Of}{\mathcal{O}_{K,f}}
\newcommand{\cC}{\mathcal{C}}
\newcommand{\OO}{\mathcal{O}}
\newcommand{\Rats}{\mathbb{Q}}
\newcommand{\Gal}{\operatorname{Gal}}
\newcommand{\Aut}{\operatorname{Aut}}
\newcommand{\GQ}{\Gal(\overline{\Rats}/\Rats)}
\newcommand{\GL}{\operatorname{GL}}
\begin{document}



\title[Abelian Division Fields and CM]{Elliptic curves with  complex multiplication and abelian division fields}

\author{Asimina S. Hamakiotes}
\address{University of Connecticut, Department of Mathematics, Storrs, CT 06269, USA}
\email{asimina.hamakiotes@uconn.edu} 
\urladdr{https://asiminah.github.io/}

\author{\'Alvaro Lozano-Robledo}
\address{University of Connecticut, Department of Mathematics, Storrs, CT 06269, USA}
\email{alvaro.lozano-robledo@uconn.edu} 
\urladdr{http://alozano.clas.uconn.edu}





\begin{abstract} Let $K$ be an imaginary quadratic field, and let $\mathcal{O}_{K,f}$ be an order in $K$ of conductor $f\geq 1$. Let $E$ be an elliptic curve with CM by $\mathcal{O}_{K,f}$, such that $E$ is defined by a model over $\mathbb{Q}(j_{K,f})$, where $j_{K,f}=j(E)$. In this article, we classify the values of $N\geq 2$ and the elliptic curves $E$ such that (i) the division field $\Q(j_{K,f},E[N])$ is an abelian extension of $\Q(j_{K,f})$, and (ii) the $N$-division field coincides with the $N$-th cyclotomic extension of the base field.
\end{abstract}

\maketitle



\section{Introduction}
Let $F$ be a number field and let $E$ be an elliptic curve defined over $F$. Let $\overline{F}$ be a fixed algebraic closure of $F$, let $N\geq 2$ be an integer, and let $E[N]=E(\overline{F})[N]$ be the $N$-torsion subgroup of $E(\overline{F})$, i.e., the subgroup formed by all the $\overline{F}$-rational points on $E$ of finite order dividing $N$. It is then natural to study the extension $F(E[N])/F$ where $F(E[N])$ denotes the field of definition of the coordinates of points in $E[N]$, which is usually called the $N$-division field of $E/F$. Since the extension $F(E[N])/F$ is Galois, it is also natural to study its Galois group $\Gal(F(E[N])/F)$, and the Galois representations that arise from the natural action of Galois on the $N$-torsion subgroup of the elliptic curve $E$.

The existence of the Weil pairing implies that $F(E[N])$ contains all the $N$-th roots of unity of $\overline{F}$, i.e., $F(\zeta_N)\subseteq F(E[N])$, where $\zeta_N$ is a primitive $N$-th root of unity (\cite[Ch. III, Cor. 8.1.1]{silverman1}). Here we are interested in classifying all number fields $F$ and elliptic curves $E/F$ with one of the following properties:
\begin{enumerate}
    \item  The extension $F(E[N])/F$ is abelian, i.e., $\Gal(F(E[N])/F)$ is abelian.
    \item The $N$-division field equals the $N$-th cyclotomic field, i.e., $F(E[N])=F(\zeta_N)$.
\end{enumerate}

 Previously, the work of Halberstadt, Merel \cite{merel}, Merel and Stein \cite{merel-stein}, and Rebolledo \cite{rebolledo} showed that if $p$ is prime, and $F(E[p])=\Q(\zeta_p)$, then $p=2,3,5$ or $p>1000$. When $F=\Q$,  Paladino \cite{paladino} classified all the curves $E/\Q$ with $\Q(E[3])=\Q(\zeta_3)$.
In \cite{gonzalez-jimenez-lozano-robledo}, Gonz\'alez-Jim\'enez and the second author proved that $\Q(E[N])=\Q(\zeta_N)$ only for $N=2,3,4,$ or $5$. More generally, they proved that $\Q(E[N])/\Q$ can be abelian only for $N=2,3,4,5,6,$ or $8$, and classified all the elliptic curves $E/\Q$ with this property. Moreover, for each $N$, they classified the possible abelian Galois groups that occur for a division field. The classification of abelian division fields of elliptic curves over $\Q$ has numerous applications, for example in: the classification of torsion subgroups of elliptic curves \cite{chou1}, \cite{enrique}, \cite{chou2}, \cite{chou3}; the classification of isogeny-torsion graphs \cite{chiloyan1}, \cite{chiloyan2}; Brauer groups \cite{varillyviray}; non-monogenic number fields \cite{hanson1}, \cite{hanson2}; congruences between elliptic curves \cite{cremona}; or the classification of Galois representations \cite{lozano-galoiscm}, \cite{elladic}, among others.

The goal of this article is a complete classification of all elliptic curves with complex multiplication defined over their minimal field of definition that satisfy (1) or (2).  Let $K$ be an imaginary quadratic field, and let $\OK$ be the ring of integers of $K$ with discriminant $\Delta_K$. Let $f\geq 1$ be an integer and let $\mathcal{O}_{K,f}$ be the order of $K$ of conductor $f$. Let $j(\Lambda)$ be the modular $j$-invariant function on lattices $\Lambda\subseteq \CC$, and let $j(\mathbb{C}/\mathcal{O}_{K,f})$ be the $j$-invariant associated to the order $\mathcal{O}_{K,f}$ when regarded as a complex lattice. The theory of complex multiplication shows that $j(\mathbb{C}/\mathcal{O}_{K,f})$ is an algebraic integer (see \cite[Ch. 2, \S 6]{silverman2}). Let $j_{K,f}$ be an arbitrary Galois conjugate of $j(\mathbb{C}/\mathcal{O}_{K,f})$. Then, an elliptic curve $E/\mathbb{Q}(j_{K,f})$ with $j(E)=j_{K,f}$ has complex multiplication by $\mathcal{O}_{K,f}$, and we say that $\Q(j_{K,f})$ is a minimal field of definition for elliptic curves with CM by $\mathcal{O}_{K,f}$ (note that there are several such conjugate fields). The main results of this paper are as follows.

The first main theorem classifies those curves with CM that satisfy (1), i.e., those curves with CM such that the $N$-th division field is abelian over the field of definition.

\begin{theorem}\label{main_thm}\label{thm-main}\label{thm-main1}
    Let $E/\mathbb{Q}(j_{K,f})$ be an elliptic curve with CM by $\mathcal{O}_{K,f}$, $f\geq 1$. Let $N\geq 2$ and let $G_{E,N}=\Gal(\Q(j_{K,f},E[N])/\Q(j_{K,f}))$ be the Galois group of the $N$-division field of $E$. Then $G_{E,N}$ is only abelian for $N=2,3$, and $4$, and $G_{E,N}\cong (\Z/2\Z)^k$ for some $0\leq k\leq 3$. Moreover:

\begin{enumerate}
        \item If $N=2$, then $G_{E,2}$ is abelian if and only if one of the following holds: 
        \begin{enumerate} \item $j_{K,f}\neq 0,1728$ and either
      \begin{itemize}
            \item $\Delta_Kf^2\equiv 0 \bmod 4$, or 
            \item  $\Delta_K\equiv 1 \bmod 8$ and $f\equiv 1 \bmod 2$.
        \end{itemize}
        In this case $G_{E,2}\cong \Z/2\Z$.
        \item $j_{K,f}=1728$. In this case $G_{E,2}$ is always abelian with $G_{E,2}\cong \{0\}$ or $\Z/2\Z$ according to whether $E$ is given by $y^2=x^3-dx$ with $d$ a square or a non-square in $\Z$, respectively.
        \item $j_{K,f}=0$ and $E/\Q$ is given by $y^2=x^3+d$ with $d$ a cube in $\Z$. In this case $G_{E,2}\cong \Z/2\Z$.
        \end{enumerate}

        \item If $N=3$, then $G_{E,3}$ is abelian if and only if $j(E)=0$ and $E/\Q$ is given by $y^2=x^3+d$ such that $-4d$ is a cube in $\Z$. If in addition $d$ and $-3d$ are not squares, then  $G_{E,3}\cong (\Z/2\Z)^2$, and  if $d$ or $-3d$ is a square, then $G_{E,3}\cong \Z/2\Z$.

        \item If $N=4$, then $G_{E,4}$ is abelian if and only if $j(E)=1728$ and $E/\Q$ is given by $y^2=x^3+dx$ with $d\in\{\pm 1,\pm 4\}$ or $d=\pm t^2$ for some square-free integer $t\not\in \{\pm 1,\pm 2\}$, in which case $G_{E,4}\cong (\Z/2\Z)^2$ or $(\Z/2\Z)^3$, respectively.

\end{enumerate}

\end{theorem}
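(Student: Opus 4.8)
The plan is to reduce everything to a computation inside the normalizer of a Cartan subgroup of $\GL_2(\Z/N\Z)$, using the standard theory of Galois representations attached to CM elliptic curves. For an elliptic curve $E/\Q(j_{K,f})$ with CM by $\mathcal{O}_{K,f}$, the image of the mod-$N$ Galois representation $\rho_{E,N}\colon \Gal(\overline{\Q(j_{K,f})}/\Q(j_{K,f}))\to \GL_2(\Z/N\Z)$ lands (after a suitable choice of basis) in the normalizer $\mathcal{N}_{\delta,\phi}(N)$ of a Cartan subgroup determined by the discriminant data $\delta=\Delta_K f^2$ and a parameter $\phi$. The subgroup $\rho_{E,N}(\cdots)\cap C_{\delta,\phi}(N)$ of index $1$ or $2$ is the image of the absolute Galois group of the CM field $K(j_{K,f})$, and it is always \emph{abelian} because complex multiplication forces the image in the Cartan to be commutative. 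Therefore $G_{E,N}$ is abelian if and only if the full image $\rho_{E,N}(\cdots)$ is abelian, and my first step is to make this reduction precise: $G_{E,N}$ is abelian iff the image lies in an abelian subgroup of $\mathcal{N}_{\delta,\phi}(N)$.

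Next I would carry out the group theory. The key structural fact is that an abelian subgroup of the normalizer of a Cartan that is \emph{not} contained in the Cartan itself must have every element outside the Cartan act by an involution that commutes with the Cartan part, and one checks that such abelian subgroups can only exist for small $N$. Concretely, if $c$ denotes complex conjugation (an element of the normalizer outside the Cartan), then for $G_{E,N}$ to be abelian we need $c$ to commute with the entire Cartan image; the conjugation action of $c$ on the Cartan $C_{\delta,\phi}(N)\cong (\mathcal{O}_{K,f}/N\mathcal{O}_{K,f})^\times$ is the Galois/complex-conjugation automorphism $\alpha\mapsto \overline{\alpha}$, so commutativity forces $\overline{\alpha}=\alpha$ for all $\alpha$ in the image, i.e. the Cartan image is fixed pointwise by conjugation. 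I would then enumerate, for each $N$, when $(\mathcal{O}_{K,f}/N\mathcal{O}_{K,f})^\times$ can have its relevant subgroup fixed by conjugation; this is an elementary but careful computation modulo $N$ that pins down $N\in\{2,3,4\}$ and forces the remaining cases to involve only the special $j$-invariants $0$ and $1728$ (equivalently $\Delta_K\in\{-3,-4\}$) together with the rational $2,3,4$-torsion constraints.

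The third step is to translate the group-theoretic conditions into the explicit model conditions stated in (1)(a)--(c), (2), and (3). For $N=2$ and generic $j_{K,f}$, abelianness of $G_{E,2}\cong \Z/2\Z$ is automatic once the mod-$2$ image is contained in a Cartan-type subgroup, and the two displayed congruence conditions ($\Delta_Kf^2\equiv 0\bmod 4$, or $\Delta_K\equiv 1\bmod 8$ with $f$ odd) are exactly the conditions under which $2$ splits or ramifies appropriately so that the mod-$2$ image is abelian; this requires tracking the $2$-adic behavior of the order $\mathcal{O}_{K,f}$. For the special $j$-invariants I would use the explicit Weierstrass models $y^2=x^3-dx$ (for $j=1728$) and $y^2=x^3+d$ (for $j=0$), where the extra automorphisms enlarge the relevant torsion and the abelianness becomes a direct condition on whether certain quantities ($d$, $-3d$, $-4d$) are squares or cubes in $\Z$. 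The $2$-, $3$-, and $4$-division polynomials of these models can be written down and factored explicitly, and the Galois group of the splitting field is computed directly in terms of these square/cube conditions, yielding the groups $(\Z/2\Z)^k$ with the stated $k$.

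The main obstacle I anticipate is the case analysis for the special curves, particularly $N=4$ with $j(E)=1728$, where one must compute the full mod-$4$ image and verify that it is abelian precisely for $d\in\{\pm1,\pm4\}$ or $d=\pm t^2$ with $t$ squarefree and $t\notin\{\pm1,\pm2\}$. Here the subtlety is that the $4$-division field involves both the $x$- and $y$-coordinates of $4$-torsion points, and the abelianness of the Galois group is sensitive to arithmetic of $d$ modulo squares in a way that is finer than the mod-$2$ analysis; distinguishing the $(\Z/2\Z)^2$ versus $(\Z/2\Z)^3$ cases demands careful bookkeeping of which quadratic subfields appear in $\Q(E[4])$. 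I would organize this by first reducing to the quartic twist $d=1$ as a base case, computing $\Q(E[4])$ there explicitly, and then tracking how twisting by a nonsquare $t$ adjoins an independent quadratic extension $\Q(\sqrt{t})$, which is exactly what raises $k$ from $2$ to $3$. Establishing that no \emph{other} values of $d$ give an abelian group, and that $N\geq 5$ never does, is where the group-theoretic nonexistence argument from the second step must be invoked to close off all remaining possibilities.
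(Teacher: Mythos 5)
Your overall strategy is the same as the paper's: place the image in the normalizer $\mathcal{N}_{\delta,\phi}(N)$ of a Cartan subgroup, observe that abelianness forces complex conjugation to commute with the Cartan part of the image, which (since conjugation acts as $\alpha\mapsto\overline{\alpha}$ on $\cC_{\delta,\phi}(N)\cong(\Of/N\Of)^\times$) forces that part into the conjugation-fixed subgroup, and then finish the special $j$-invariants $0$ and $1728$ by explicit division-field computations for the models $y^2=x^3+d$ and $y^2=x^3+dx$. This is precisely the paper's Lemma \ref{abelian_conditions} ($2b\equiv 0$ and $b\phi\equiv 0 \bmod N$), Corollary \ref{nonabelian}, and Lemma \ref{lem-4abelianj1728}. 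However, your step 2 has a genuine gap: ``enumerate when the relevant subgroup can be fixed by conjugation'' does not close the argument, because for $j_{K,f}\neq 0,1728$ the a priori index bound on the image (a divisor of $|\Of^\times/\mathcal{O}_{K,f,N}^\times|$, hence of $2$) is \emph{not} smaller than the index of the conjugation-fixed subgroup in several relevant cases. For example, if $3$ splits in $\Of$ then the split Cartan mod $3$ has order $4$ and its conjugation-fixed diagonal has index $2$, so an abelian index-$2$ image is consistent with the index bound; similarly for $N=4$ when $\Delta_K\equiv 1\bmod 8$. Excluding these requires knowing \emph{which} subgroups actually occur as CM images: the full normalizer at primes of good reduction, and at $\ell=2$ only specific index-$2$ subgroups, each of which contains an element $c_{\delta,0}(\pm1,\pm1)$ with $b$ a unit. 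This is the content of the explicit classification in \cite{lozano-galoiscm} (Theorems 1.2, 1.4--1.8) that the paper invokes proposition by proposition in Section \ref{applying_results}; your sketch substitutes a counting argument that is too coarse to do this work.

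A second, smaller gap is the composite case. Your prime-power analysis reduces $N\geq 5$ to $N=6$ or $12$, but for $N=6$ both local conditions are individually satisfiable at $j=0$: $G_{E,2}$ abelian requires $d$ to be a cube, while $G_{E,3}$ abelian requires $-4d$ to be a cube. The paper's Proposition \ref{n=6} rules out $N=6$ by the arithmetic incompatibility of these two conditions ($-4$ is not a cube in $\Z$); this is not a consequence of the ``group-theoretic nonexistence argument from the second step'' as you assert, and some argument of this kind (or an index-multiplicativity argument showing the mod-$6$ image would need index at least $9>6$) must be supplied.
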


Our second main theorem classifies those curves with CM that satisfy property (2), i.e., curves with CM such that the $N$-th division field is cyclotomic over their minimal field of definition.

\begin{theorem}\label{thm-main2}
    Let $E/\mathbb{Q}(j_{K,f})$ be an elliptic curve with CM by $\mathcal{O}_{K,f}$, with $f \geq 1$, and let $N\geq 2$. Then, $\mathbb{Q}(j_{K,f}, E[N]) = \mathbb{Q}(j_{K,f},\zeta_N)$ if and only if 
    \begin{enumerate}
        \item $N=2$, and $j(E)=1728$ with $E/\Q$ given by a form $y^2=x^3-d^2x$ for some integer $d$, or
        \item $N=3$, and $j(E)=0$ with $E/\Q$ given by a form $y^2=x^3+d$ for some integer $d$ such that $d$ or $-3d$ is a square and $-4d$ is a cube. 
    \end{enumerate}
\end{theorem}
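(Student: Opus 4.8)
The plan is to deduce Theorem~\ref{thm-main2} from Theorem~\ref{thm-main1} by a degree comparison. The crucial observation is that cyclotomic extensions are abelian, so the equality $\Q(j_{K,f},E[N])=\Q(j_{K,f},\zeta_N)$ forces $G_{E,N}=\Gal(\Q(j_{K,f},E[N])/\Q(j_{K,f}))$ to be abelian; by Theorem~\ref{thm-main1} this already restricts us to $N\in\{2,3,4\}$ together with the explicit curves listed there. Moreover, the Weil pairing gives the containment $\Q(j_{K,f},\zeta_N)\subseteq \Q(j_{K,f},E[N])$, so the desired equality of fields is equivalent to the equality of degrees $[\Q(j_{K,f},E[N]):\Q(j_{K,f})]=[\Q(j_{K,f},\zeta_N):\Q(j_{K,f})]$. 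I would phrase this representation-theoretically: writing $\rho_{E,N}$ for the mod-$N$ representation, the relation $\det\circ\rho_{E,N}=\chi_N$, where $\chi_N$ is the mod-$N$ cyclotomic character, identifies $\Gal(\Q(j_{K,f},\zeta_N)/\Q(j_{K,f}))$ with the image $\det(G_{E,N})$. Hence the equality holds if and only if $\det$ is injective on $G_{E,N}$, i.e.\ $G_{E,N}\cap \SL_2(\Z/N\Z)=\{I\}$, which, since $\det(G_{E,N})$ is a quotient of $G_{E,N}$, is the same as the numerical condition $|G_{E,N}|=[\Q(j_{K,f},\zeta_N):\Q(j_{K,f})]$.

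With this reformulation in hand, I would run through the three abelian cases of Theorem~\ref{thm-main1} and compare orders. For $N=2$ one has $\zeta_2=-1\in\Q(j_{K,f})$, so the cyclotomic degree is $1$ and the condition becomes $G_{E,2}=\{0\}$; by part~(1) of Theorem~\ref{thm-main1} this occurs precisely in case~(1)(b) when $E\colon y^2=x^3-dx$ with $d$ a square, that is $y^2=x^3-d^2x$, whose full $2$-torsion $\{(0,0),(\pm d,0)\}$ is rational. For $N=3$, since $j(E)=0$ forces $f=1$ and $\Q(j_{K,f})=\Q$, one has $[\Q(\zeta_3):\Q]=2$, so the condition becomes $|G_{E,3}|=2$; by part~(2) of Theorem~\ref{thm-main1} this is exactly the case where $-4d$ is a cube and $d$ or $-3d$ is a square, in which $G_{E,3}\cong\Z/2\Z$, while the case $G_{E,3}\cong(\Z/2\Z)^2$ is excluded. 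Finally, for $N=4$ one again has $[\Q(\zeta_4):\Q]=2$, but part~(3) of Theorem~\ref{thm-main1} gives $G_{E,4}\cong(\Z/2\Z)^2$ or $(\Z/2\Z)^3$, of order $4$ or $8$, so the numerical condition never holds and $N=4$ contributes nothing. Collecting the surviving cases yields exactly the two families in the statement.

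The mathematical content of this argument is essentially entirely imported from Theorem~\ref{thm-main1}; the only genuinely new ingredient is the clean dichotomy between the two conditions via the determinant, so I expect no serious obstacle. The one point demanding care is the justification that $\det(G_{E,N})$ really computes the cyclotomic degree over the correct base field---this uses $\det\circ\rho_{E,N}=\chi_N$ together with the fact that the surviving $j$-invariants $0$ and $1728$ force $\Q(j_{K,f})=\Q$, so that the relevant degrees $[\Q(\zeta_N):\Q]$ are the familiar values $1,2,2$ for $N=2,3,4$---and the bookkeeping that translates the hypothesis ``$d$ a square'' into rationality of the full $2$-torsion. Once these are checked, the equivalences fall out of the order comparison.
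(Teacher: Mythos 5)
Your proposal is correct and follows essentially the same route as the paper: both reduce to $N\in\{2,3,4\}$ via Theorem~\ref{thm-main1} and then compare $|G_{E,N}|$ with the cyclotomic degree $[\Q(j_{K,f},\zeta_N):\Q(j_{K,f})]$ case by case, using the Weil-pairing containment $\Q(j_{K,f},\zeta_N)\subseteq\Q(j_{K,f},E[N])$ to turn the field equality into an order count. Your determinant formulation $\det\circ\rho_{E,N}=\chi_N$ is just a slightly more explicit packaging of the same comparison the paper performs.
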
 

\subsection{Structure of the paper} The paper is organized as follows. In Section \ref{examples} we provide some examples of curves that satisfy the conditions of Theorems \ref{thm-main1} and \ref{thm-main2}. In Section \ref{proofs} we recall some basic results about the image of Galois representations attached to CM elliptic curves, summarize the main results from \cite{lozano-galoiscm}, and prove some preliminary results to understand what subgroups of normalizers of Cartan subgroups are  abelian. In Section \ref{applying_results} we apply the results from Section \ref{proofs} to the subgroups of $\GL(2,\Z_p)$ that can occur as images of $\rho_{E,p^\infty}$ from \cite{lozano-galoiscm} and analyze under what conditions we have that $G_{E,N}=\operatorname{im} \rho_{E,N}$ is abelian in the prime power case. In Section \ref{proof-thm-main1} we complete the proof of Theorem \ref{thm-main1} and in Section \ref{proof-thm-main2} we complete the proof of Theorem \ref{thm-main2}. 

\begin{ack}
    The authors would like to thank Enrique Gonz\'alez-Jim\'enez for helpful comments and suggestions.
\end{ack}

\section{Examples}\label{examples}

In this section we provide a few examples of curves (with LMFDB labels, \cite{lmfdb}) that satisfy the conditions of Theorems \ref{thm-main1} and \ref{thm-main2}.
\subsection{$N=2$}
\begin{itemize}
    \item $E_1/\Q: y^2=x^3-x$ (\href{http://www.lmfdb.org/EllipticCurve/Q/32/a/3}{32.a3}) has abelian $2$-division field with $G_{E_1,2}\cong \{0\}$, and $j(E_1)=1728$.
    \item $E_2/\Q: y^2=x^3-2x$ (\href{http://www.lmfdb.org/EllipticCurve/Q/256/b/1}{256.b1}) has $G_{E_2,2}\cong \Z/2\Z$, and $j(E_2)=1728$.
    \item $E_3/\Q: y^2=x^3+1$ (\href{http://www.lmfdb.org/EllipticCurve/Q/36/a/4}{36.a4}) has $G_{E_3,2}\cong \Z/2\Z$, and $j(E_3)=0$.
    \item $E_4/\Q: y^2=x^3+x^2-13x-21$ (\href{http://www.lmfdb.org/EllipticCurve/Q/256/a/1}{256.a1}) has $G_{E_4,2}\cong \Z/2\Z$, and $j(E_4)=8000$. Here $\Delta_K=-8$ and $f=1$, so $\Delta_K f^2 \equiv 0 \bmod 4$.
    \item $E_5/\Q: y^2+xy=x^3-x^2-107x+552$ (\href{http://www.lmfdb.org/EllipticCurve/Q/49/a/2}{49.a2}) has $G_{E_5,2}\cong \Z/2\Z$, and $j(E_5)=-3375$. Here $\Delta_K=-7\equiv 1 \bmod 8$ and $f=1$.

\item In addition, here is an example of an elliptic curve that is not defined over $\Q$. Let $E_6$ be given by
$${y}^2+\sqrt{2}{x}{y}={x}^{3}+{x}^{2}+\left(15\sqrt{2}-22\right){x}+46\sqrt{2}-69$$
which is defined over $\Q(\sqrt{2})$. This is the elliptic curve \href{http://www.lmfdb.org/EllipticCurve/2.2.8.1/32.1/a/1}{32.1-a1} defined over $\Q(\sqrt{2})$. We note that $j(E_6)= -29071392966 \sqrt{2} + 41113158120$ so there is no model over $\Q$. Indeed, this curve has complex multiplication by $\Z[\sqrt{-16}]$ which is the suborder of conductor $f=4$ of $\OK=\Z[i]$. In particular, $\Delta_K f^2 = -4\cdot 16 = -64\equiv 0 \bmod 4$, so Theorem 1.1 implies that $G_{E_6,2}\cong \Z/2\Z$. This fact can be verified by checking that $E_6(\Q(\sqrt{2}))[2]\cong \Z/2\Z$ is generated by a non-trivial point of two torsion defined over $\Q(\sqrt{2})$, namely
$$P=\left(2 \sqrt{2} - \frac{3}{2} , \frac{3}{4} \sqrt{2} - 2 \right).$$
\item Similarly, let $E_7$ be given by
$${y}^2+{x}{y}+\varphi{y}={x}^{3}-{x}^{2}-2\varphi{x}+\varphi$$
where $\varphi=(1+\sqrt{5})/2$ is the golden ratio, so that $E_7$ is defined over $\Q(\sqrt{5})$. This is the curve \href{http://www.lmfdb.org/EllipticCurve/2.2.5.1/81.1/a/1}{81.1-a1} defined over $\Q(\sqrt{5})$. We note that $j(E_7)= -85995 \varphi - 52515$ so $E_7$ has no model over $\Q$. This curve has CM by $\Z[(1+\sqrt{-15})/2]$ and therefore $\Delta_K=-15$ and $f=1$. In particular, $\Delta_K\equiv 1 \bmod 8$ and Theorem 1.1 implies that $G_{E_7,2}\cong \Z/2\Z$. Once again this can be verified: $E(\Q(\sqrt{5}))[6]\cong \Z/6\Z$ so there is a unique non-trivial $2$-torsion point defined over $\Q(\sqrt{5})$, namely $Q = \left(-\varphi + 3 , 3 \varphi - 6 \right).$
\end{itemize}

\subsection{$N=3$}
\begin{itemize} 
    \item $E_8/\Q: y^2=x^3+2$ (\href{https://www.lmfdb.org/EllipticCurve/Q/1728/n/4}{1728.n4}) has $G_{E_8,3}\cong (\Z/2\Z)^2$, and $j(E_8)=0$. Here $-4d=-4\cdot 2=-8$ is a cube, but $d=2$ and $-3d=-6$ are not cubes.
    \item $E_9/\Q: y^2 = x^3 + 16$ (\href{https://www.lmfdb.org/EllipticCurve/Q/27/a/4}{27.a4}) has $G_{E_9,3}\cong \Z/2\Z$, and $j(E_9)=0$. Here $-4d=-4\cdot 16=-64=-2^6$ is a cube and $d=16$ is a square. Moreover, here we have $\Q(E[3])=\Q(\sqrt{-3})=\Q(\zeta_3)$.
\end{itemize}

\subsection{$N=4$}
\begin{itemize}
    \item $E_{10}/\Q: y^2=x^3-4x$ (\href{https://www.lmfdb.org/EllipticCurve/Q/64/a/3}{64.a3}) has $G_{E_{10},4}\cong (\Z/2\Z)^2$, and $j(E_{10})=1728$. Here $d=-4$.
    \item $E_{11}/\Q: y^2=x^3+9x$ (\href{https://www.lmfdb.org/EllipticCurve/Q/576/c/4}{576.c4}) has $G_{E_{11},4}\cong (\Z/2\Z)^3$, and $j(E_{11})=1728$. Here $d=3^2$. 
\end{itemize}


\section{Background and Preliminary Results}\label{proofs}

In this section, we will prove some preliminary results. For simplicity, we will establish some notation. Let $K$ be an imaginary quadratic field, and let $\OK$ be the ring of integers of $K$ with discriminant $\Delta_K$. Let $f\geq 1$ be an integer and let $\mathcal{O}_{K,f}$ be the order of $K$ of conductor $f$. Let $j_{K,f}$ be a CM $j$-invariant as in the introduction and let $N\geq 2$. We define associated constants $\delta$ and $\phi$ as follows:
	\begin{itemize}
\item If $\Delta_Kf^2\equiv 0\bmod 4$, let $\delta=\Delta_K f^2/4$, and $\phi=0$.
		\item If $\Delta_Kf^2\equiv 1 \bmod 4$, let $\delta=\frac{(\Delta_K-1)}{4}f^2$, let $\phi=f$.
	\end{itemize}
We define matrices in $\GL(2,\Z/N\Z)$ by
$$c_\varepsilon=\left(\begin{array}{cc} -\varepsilon & 0\\ \phi & \varepsilon\\\end{array}\right), \quad \text{ and } \quad c_{\delta,\phi}(a,b)=\left(\begin{array}{cc}a+b\phi & b\\ \delta b & a\\ \end{array}\right)$$ where $\varepsilon \in \{\pm 1\}$ and $a,b \in \Z/N\Z$ such that $\det(c_{\delta,\phi}(a,b))\in (\Z/N\Z)^\times$. 
We define the Cartan subgroup $\cC_{\delta,\phi}(N)$ of $\GL(2,\Z/N\Z)$ by
$$\cC_{\delta,\phi}(N)=\left\{c_{\delta,\phi}(a,b): a,b\in\Z/N\Z,\  \det(c_{\delta,\phi}(a,b)) \in (\Z/N\Z)^\times \right\},$$
and $\mathcal{N}_{\delta,\phi}(N) = \left\langle \cC_{\delta,\phi}(N),c_1\right\rangle$. We will sometimes call $\mathcal{N}_{\delta,\phi}(N)$ the ``normalizer'' of $\cC_{\delta,\phi}(N)$ in $\GL(2,\Z/N\Z)$ but we emphasize here that, while this is the case when $N$ is prime, it may not be the actual normalizer for composite values of $N$ (see \cite{lozano-galoiscm}, Section 5). Finally, we write $\mathcal{N}_{\delta,\phi} = \varprojlim \mathcal{N}_{\delta,\phi}(N)$.

First, we recall results from \cite{lozano-galoiscm} that provide details about the image of Galois representations attached to CM elliptic curves.

\begin{theorem}[\cite{lozano-galoiscm}, Theorems 1.1 and 1.2]\label{thm-cmrep-intro-alvaro}
	Let $E/\Q(j_{K,f})$ be an elliptic curve with CM by $\OO_{K,f}$, let $N\geq 3$, and let $\rho_{E,N}$ be the Galois representation $\Gal(\overline{\Q(j_{K,f})}/\Q(j_{K,f})) \to \Aut(E[N])\cong \GL(2,\Z/N\Z)$, and let $\rho_{E}$ be the Galois representation $\Gal(\overline{\Q(j_{K,f})}/\Q(j_{K,f})) \to \varprojlim \Aut(E[N])\cong \GL(2,\widehat{\Z})$. Then,
	\begin{enumerate}
            \item There is a $\Z/N\Z$-basis of $E[N]$ such that the image of $\rho_{E,N}$
	is contained in $\mathcal{N}_{\delta,\phi}(N)$, and	 the index of the image of $\rho_{E,N}$ in $\mathcal{N}_{\delta,\phi}(N)$ is a divisor of the order of $\Of^\times/\mathcal{O}_{K,f,N}^\times$, where $\mathcal{O}_{K,f,N}^\times=\{u\in\Of^\times: u\equiv 1 \bmod N\Of\}$.
		\item There is a compatible system of bases of $E[N]$ such that the image of $\rho_E$ is contained in $\mathcal{N}_{\delta,\phi}$, and the index of the image of $\rho_{E}$ in $\mathcal{N}_{\delta,\phi}$ is a divisor of the order $\Of^\times$. In particular, the index is a divisor of $4$ or $6$. 
	\end{enumerate} 
\end{theorem}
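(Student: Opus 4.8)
The plan is to deduce the statement from the theory of complex multiplication, by uniformizing $E$ over $\CC$ and combining the $\Of$-module structure of the torsion with the reciprocity law. Write $F=\Q(j_{K,f})$ and let $H=FK$ be the compositum with $K$. Since $[\Q(j_{K,f}):\Q]=[K(j_{K,f}):K]$ equals the class number of $\Of$, we have $K\not\subseteq F$ and $[H:F]=2$, so $\Gal(\overline F/F)$ sits in an exact sequence $1\to G_H\to G_F\to\Gal(H/F)\to 1$ with $G_H=\Gal(\overline F/H)$ and $\Gal(H/F)\cong\Z/2\Z$. Uniformizing $E\cong\CC/\Of$ identifies $E[N]\cong \tfrac1N\Of/\Of\cong\Of/N\Of$, which is free of rank one over $\Of/N\Of$ for every $N$ (independently of $\gcd(N,f)$). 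Fixing a generator $P$ of $E[N]$ as an $\Of/N\Of$-module and writing $\Of=\Z[\tau]$ with $\tau^2=\phi\tau+\delta$, I would take the $\Z/N\Z$-basis $\{\tau P,P\}$; a direct computation with the standard generator of $\Of$ recovers exactly the constants $\delta,\phi$ defined above and shows that multiplication by $a+b\tau$ has matrix $c_{\delta,\phi}(a,b)$ in this basis.

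With this basis fixed, the containment in the normalizer is formal. Every $\sigma\in G_H$ commutes with the endomorphisms of $E$ (which are defined over $H$), hence acts $\Of$-linearly on $E[N]$; an $\Of/N\Of$-linear automorphism of a free rank-one module is multiplication by a unit, so $\rho_{E,N}(\sigma)\in(\Of/N\Of)^\times=\cC_{\delta,\phi}(N)$. Any $\sigma\in G_F\setminus G_H$ restricts to the nontrivial element of $\Gal(H/F)$ and therefore conjugates the CM action by complex conjugation, $\sigma\alpha\sigma^{-1}=\overline\alpha$ for $\alpha\in\End(E)$; such a $\sigma$ acts semilinearly, and a direct computation shows its matrix lies in the coset $\cC_{\delta,\phi}(N)\,c_1$, where $c_1$ is precisely the matrix of $\alpha\mapsto\overline\alpha$ in the chosen basis. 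Thus $\operatorname{im}\rho_{E,N}\subseteq\mathcal{N}_{\delta,\phi}(N)=\cC_{\delta,\phi}(N)\sqcup\cC_{\delta,\phi}(N)c_1$. For $N\geq 3$ the two cosets are genuinely distinct (one checks $c_1\notin\cC_{\delta,\phi}(N)$ unless $N\mid 2$ and $\phi=0$, which explains the hypothesis $N\geq 3$), and because $K\not\subseteq F$ the nontrivial coset is actually met, so $\operatorname{im}\rho_{E,N}$ surjects onto $\mathcal{N}_{\delta,\phi}(N)/\cC_{\delta,\phi}(N)\cong\Z/2\Z$. Since the cosets are disjoint we get $\operatorname{im}\rho_{E,N}\cap\cC_{\delta,\phi}(N)=\rho_{E,N}(G_H)$, whence
$$[\mathcal{N}_{\delta,\phi}(N):\operatorname{im}\rho_{E,N}]=[\cC_{\delta,\phi}(N):\rho_{E,N}(G_H)].$$

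The heart of the argument is to bound this last index, and here I would invoke the main theorem of complex multiplication. Under the identification $\cC_{\delta,\phi}(N)\cong(\Of/N\Of)^\times$, the restriction $\rho_{E,N}|_{G_H}$ is governed by the Artin reciprocity map in the ring class field setting: the field $H(E[N])$ is contained in the ray class field of $\Of$ of conductor $N$, whose Galois group over $H$ is $(\Of/N\Of)^\times$ modulo the image of the global units $\Of^\times$. Consequently $\rho_{E,N}(G_H)$ contains the image of $(\Of/N\Of)^\times$ modulo the reduction of $\Of^\times$, so its index in $\cC_{\delta,\phi}(N)$ divides $|\Of^\times/\{u\in\Of^\times:u\equiv 1\bmod N\Of\}|=|\Of^\times/\mathcal{O}_{K,f,N}^\times|$, which proves part (1). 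For part (2) I would pass to the inverse limit over $N$: the compatible bases glue, the image of $\rho_E$ lands in $\mathcal{N}_{\delta,\phi}$, and the index there divides $|\Of^\times|$, which is $2$, $4$, or $6$ (the last two only for $\Of=\Z[i]$ and $\Of=\Z[\zeta_3]$); in particular it divides $4$ or $6$. The step I expect to be the main obstacle is exactly this reciprocity bound: one must make the CM reciprocity law explicit for a possibly \emph{non-maximal} order $\Of$ and for the specific quadratic twist defining $E/F$, and verify that the only obstruction to surjectivity of $\rho_{E,N}(G_H)$ onto the Cartan is the reduction of the finitely many global units. Controlling this cokernel at each finite level $N$ (rather than merely in the limit) is the delicate point.
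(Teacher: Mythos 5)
This statement is not proved in the paper at all: it is quoted verbatim from Theorems~1.1 and~1.2 of the cited reference \cite{lozano-galoiscm}, so there is no internal proof to compare against. Judged on its own terms, your sketch follows the same broad strategy as that reference: the identification $E[N]\cong\Of/N\Of$, the basis $\{\tau P,P\}$ realizing multiplication by $a+b\tau$ as $c_{\delta,\phi}(a,b)$ and complex conjugation as $c_1$, the splitting of $G_F$ over $G_H$ with $H=FK$, and the reduction of the index bound to $[\cC_{\delta,\phi}(N):\rho_{E,N}(G_H)]$ are all correct and are essentially how the containment in $\mathcal{N}_{\delta,\phi}(N)$ is established there (modulo the small point that $E$ corresponds to $\CC/\mathfrak{a}$ for a proper $\Of$-ideal $\mathfrak{a}$, whose $N$-torsion is free of rank one over $\Of/N\Of$ because $\mathfrak{a}$ is invertible, not because the lattice is literally $\Of$).

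The genuine gap is the step you yourself flag as the obstacle, and as written it is not merely incomplete but points the wrong way. You assert that $H(E[N])$ is contained in the ray class field of $\Of$ of conductor $N$ and conclude that $\rho_{E,N}(G_H)$ is \emph{large}. First, a containment of $H(E[N])$ in an abelian extension of $H$ of known degree would only bound $\rho_{E,N}(G_H)$ from \emph{above}, which is useless here; the theorem requires a lower bound on the image. Second, the containment is false in general: the ray class field of conductor $N$ is $H(h(E[N]))$, the field generated by \emph{Weber function} values, and $[H(E[N]):H(h(E[N]))]$ can be as large as $|\Of^\times/\mathcal{O}_{K,f,N}^\times|$ (e.g.\ $\Q(i)(E[3])$ has degree $8$ over $\Q(i)$ while the ray class field of conductor $3$ has degree $2$). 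The correct route is: (a) the first main theorem of CM \emph{for orders} identifies $H(h(E[N]))$ with the ring/ray class field of conductor $N$ for $\Of$, so $\Gal(H(h(E[N]))/H)\cong(\Of/N\Of)^\times$ modulo the reduction of $\Of^\times$; (b) the kernel of $\rho_{E,N}(G_H)\twoheadrightarrow\Gal(H(h(E[N]))/H)$ lands inside the reduction of $\Aut(E)=\Of^\times$, whence $[(\Of/N\Of)^\times:\rho_{E,N}(G_H)]$ divides $|\Of^\times/\mathcal{O}_{K,f,N}^\times|$. Step (a) for non-maximal orders is the substantive arithmetic input (Söhngen/Franz, reproved in \cite{lozano-galoiscm}), and neither (a) nor (b) appears in your proposal; without them parts (1) and (2) are not established.
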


Our goal is to understand what subgroups of $\mathcal{N}_{\delta,\phi}(N)$ are images of Galois representations and abelian. First, we recall that the Cartan subgroup $\cC_{\delta,\phi}(N)$ is abelian.

\begin{lemma}\label{lem-cartanisabelian}
    Let $N\geq 2$. The group $\cC_{\delta,\phi}(N)$ is isomorphic to $(\mathcal{O}_{K,f}/N\mathcal{O}_{K,f})^\times$ so, in particular, it is an abelian subgroup of $\GL(2,\Z/N\Z)$.
\end{lemma}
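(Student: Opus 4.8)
The plan is to exhibit an explicit isomorphism between $\cC_{\delta,\phi}(N)$ and $(\mathcal{O}_{K,f}/N\mathcal{O}_{K,f})^\times$, from which commutativity follows immediately since the latter is the unit group of a commutative ring. First I would recall that the order $\mathcal{O}_{K,f}$ has a $\Z$-basis $\{1,\tau\}$, where $\tau$ is a root of the characteristic polynomial associated to the data $(\delta,\phi)$; concretely, multiplication by $\tau$ on $\mathcal{O}_{K,f}$ (viewed as a rank-$2$ free $\Z$-module) is represented, with respect to this basis, by the companion-type matrix $\left(\begin{smallmatrix} \phi & 1 \\ \delta & 0 \end{smallmatrix}\right)$, matching the definition of $c_{\delta,\phi}(a,b)$ as $a\cdot I + b\cdot M_\tau$ where $M_\tau$ is that multiplication matrix. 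The key point is that the constants $\delta$ and $\phi$ were defined precisely so that $\tau$ satisfies $\tau^2 = \phi\tau + \delta$ (one checks this separately in the two congruence cases $\Delta_K f^2 \equiv 0$ and $\equiv 1 \bmod 4$), so that $M_\tau^2 = \phi M_\tau + \delta I$.

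Next I would define the map $\Psi\colon \mathcal{O}_{K,f}/N\mathcal{O}_{K,f} \to M_2(\Z/N\Z)$ sending the class of $a + b\tau$ to $c_{\delta,\phi}(a,b) = aI + bM_\tau$, reducing all entries mod $N$. I would verify that $\Psi$ is a ring homomorphism: additivity is clear, and multiplicativity reduces to checking that $\Psi((a+b\tau)(a'+b'\tau)) = \Psi(a+b\tau)\Psi(a'+b'\tau)$, which holds because both sides use only the single relation $M_\tau^2 = \phi M_\tau + \delta I$ (equivalently $\tau^2 = \phi\tau + \delta$). This is a routine but essential computation. The map is well-defined on the quotient mod $N$ since reduction is a ring homomorphism, and it is injective because $\{I, M_\tau\}$ are linearly independent over $\Z/N\Z$ (the off-diagonal entry $b$ in $c_{\delta,\phi}(a,b)$ recovers $b$, and then $a$ is recovered from the lower-right entry), so $\Psi$ is a ring isomorphism onto the subring $\{c_{\delta,\phi}(a,b) : a,b \in \Z/N\Z\}$ of $M_2(\Z/N\Z)$.

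Finally, I would pass to unit groups. A ring isomorphism carries units to units, so $\Psi$ restricts to a group isomorphism from $(\mathcal{O}_{K,f}/N\mathcal{O}_{K,f})^\times$ onto the group of invertible matrices of the form $c_{\delta,\phi}(a,b)$, and invertibility in $M_2(\Z/N\Z)$ is exactly the condition $\det(c_{\delta,\phi}(a,b)) \in (\Z/N\Z)^\times$ appearing in the definition of $\cC_{\delta,\phi}(N)$. Thus $\cC_{\delta,\phi}(N) \cong (\mathcal{O}_{K,f}/N\mathcal{O}_{K,f})^\times$ as groups. Since $\mathcal{O}_{K,f}/N\mathcal{O}_{K,f}$ is a commutative ring, its unit group is abelian, and therefore $\cC_{\delta,\phi}(N)$ is an abelian subgroup of $\GL(2,\Z/N\Z)$.

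I expect the only mildly delicate step to be confirming that the defining relation $\tau^2 = \phi\tau + \delta$ holds in both cases of the definition of $(\delta,\phi)$; once that identity is in hand, the ring-homomorphism check and the passage to units are formal. This is why I would isolate the verification of $M_\tau^2 = \phi M_\tau + \delta I$ as the central computation, treating everything downstream as a direct consequence of $\Psi$ being a ring isomorphism.
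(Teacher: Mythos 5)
Your argument is correct, and it is the standard one: realize $\cC_{\delta,\phi}(N)$ as the image of the regular representation of $\mathcal{O}_{K,f}/N\mathcal{O}_{K,f}$ on the basis $\{1,\tau\}$ with $\tau^2=\phi\tau+\delta$, and pass to unit groups. The paper itself gives no proof here --- it only cites Lemma 2.5 and Remark 2.6 of \cite{lozano-galoiscm}, which establish exactly this identification --- so your proposal supplies, correctly, the argument that the citation encapsulates.
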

\begin{proof}
    See \cite{lozano-galoiscm}, Lemma 2.5 and Remark 2.6.
\end{proof}

Next, we give conditions that will help us characterize when a subgroup of  $\mathcal{N}_{\delta,\phi}(N)$ is abelian. 

\begin{lemma}\label{abelian_conditions}
     Let $N\geq 2$ and let  $G\subseteq \mathcal{N}_{\delta,\phi}(N)$ be a subgroup. If $c_1, c_{\delta,\phi}(a,b) \in G$, for some $a,b\in \Z/N\Z$, such that $c_1 \cdot c_{\delta,\phi}(a,b) = c_{\delta,\phi}(a,b) \cdot c_1$, then $b\phi \equiv 0 \bmod N$ and $2b\equiv 0 \bmod N$.  Moreover, if $\phi=0$, and if $c_\varepsilon,c_{\delta,0}(a,b)\in G$ for some $\varepsilon \in \{\pm 1\}$, such that $c_\varepsilon \cdot c_{\delta,0}(a,b) = c_{\delta,0}(a,b) \cdot c_\varepsilon$ then the same conclusion holds.
     \end{lemma}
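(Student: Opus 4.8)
The plan is to prove this by a direct matrix computation: write out the two products $c_1\cdot c_{\delta,\phi}(a,b)$ and $c_{\delta,\phi}(a,b)\cdot c_1$ explicitly, and then equate them entry by entry modulo $N$. Since $c_1=\left(\begin{smallmatrix}-1&0\\\phi&1\end{smallmatrix}\right)$ and $c_{\delta,\phi}(a,b)=\left(\begin{smallmatrix}a+b\phi&b\\\delta b&a\end{smallmatrix}\right)$, both products are $2\times 2$ matrices whose entries are explicit $\Z$-linear combinations of $a,b,\phi,\delta$, so the commutation relation $c_1\, c_{\delta,\phi}(a,b)=c_{\delta,\phi}(a,b)\, c_1$ becomes four congruences mod $N$, and the whole statement reduces to reading them off.

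The key step is to extract the conditions from the top row. First I would compute that the top row of $c_1\cdot c_{\delta,\phi}(a,b)$ is $\big(-(a+b\phi),\,-b\big)$ while the top row of $c_{\delta,\phi}(a,b)\cdot c_1$ is $\big(-a,\,b\big)$. Equating the $(1,1)$ entries gives $-a-b\phi\equiv -a$, i.e.\ $b\phi\equiv 0\bmod N$; equating the $(1,2)$ entries gives $-b\equiv b$, i.e.\ $2b\equiv 0\bmod N$. These are exactly the two claimed conclusions. I then have to check that the bottom row produces nothing stronger: the $(2,2)$ entries give $a+b\phi\equiv a$, recovering $b\phi\equiv 0$ again, and the $(2,1)$ entries give $\phi a+b\phi^2+\delta b\equiv -\delta b+a\phi$, i.e.\ $b\phi^2+2\delta b\equiv 0\bmod N$. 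The only point requiring care, such as it is, is to observe that this last congruence is automatic once we know $b\phi\equiv 0$ and $2b\equiv 0$: indeed $b\phi^2=(b\phi)\phi\equiv 0$ and $2\delta b=\delta(2b)\equiv 0$. Hence the full commutation relation is equivalent to the two stated congruences, with no extra constraint.

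For the ``moreover'' clause I would specialize to $\phi=0$ and repeat the computation with $c_\varepsilon=\left(\begin{smallmatrix}-\varepsilon&0\\0&\varepsilon\end{smallmatrix}\right)$ in place of $c_1$. Here $c_\varepsilon\cdot c_{\delta,0}(a,b)$ and $c_{\delta,0}(a,b)\cdot c_\varepsilon$ agree on the diagonal automatically, the $(1,2)$ entries give $-\varepsilon b\equiv \varepsilon b$, and the $(2,1)$ entries give $\varepsilon\delta b\equiv -\varepsilon\delta b$. Since $\varepsilon\in\{\pm1\}$ is a unit in $\Z/N\Z$, it cancels, yielding $2b\equiv 0\bmod N$ (and $2\delta b\equiv 0$, again subsumed). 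As $\phi=0$ makes $b\phi\equiv 0$ hold trivially, the same conclusion follows. The argument is entirely elementary and self-contained; there is no genuine obstacle beyond confirming that the lower-left entry imposes no condition beyond $b\phi\equiv0$ and $2b\equiv0$.
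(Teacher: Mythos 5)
Your proof is correct and follows essentially the same route as the paper: a direct entry-by-entry comparison of the two products, with the top row yielding $b\phi\equiv 0$ and $2b\equiv 0$ and the bottom row giving nothing new. The only cosmetic difference is in the ``moreover'' clause, where the paper observes $c_\varepsilon=(\varepsilon\cdot\operatorname{Id})\cdot c_1$ to reduce to the first case instead of recomputing the products, but the content is identical.
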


\begin{proof}
    Suppose that $c_1$ and $c_{\delta,\phi}(a,b)\in G$, where $a,b\in \mathbb{Z}/N\mathbb{Z}$ and $a^2+ab\phi-\delta b^2 \in (\mathbb{Z}/N\mathbb{Z})^\times$, and suppose the two matrices commute:
    \begin{align*}
        \left(\begin{array}{cc} a+b\phi & b \\ \delta b & a\\\end{array}\right)\left(\begin{array}{cc} -1 & 0\\ \phi & 1\\\end{array}\right) \ &\equiv \ \left(\begin{array}{cc} -1 & 0\\ \phi & 1\\\end{array}\right)\left(\begin{array}{cc} a+b\phi & b \\ \delta b & a\\\end{array}\right) \ \bmod N, \text{ and so} \\
        \left(\begin{array}{cc} -a & b\\ -\delta b + a\phi & a\\\end{array}\right) \ &\equiv \ \left(\begin{array}{cc} -a-b\phi & -b \\ a\phi + b\phi^2 + \delta b & b\phi + a\\\end{array}\right) \ \bmod N.
    \end{align*}
    In order for the last equivalence to hold, we must have that
    \begin{itemize}
        \item $-a \equiv -a-b\phi \bmod N$ and therefore $ b\phi \equiv 0 \bmod N$, 
        \item $b \equiv -b \bmod N$ and so $2b \equiv 0 \bmod N$,
        \item $-\delta b + a\phi \equiv a \phi + b\phi^2 + \delta b \bmod N$ which implies $b\phi^2 + 2\delta b \equiv 0 \bmod N$, and
        \item $a \equiv b\phi + a \bmod N$ which also implies that $b\phi \equiv 0 \bmod N$.
    \end{itemize}
    In particular, $b\phi \equiv 0 \bmod N$ and $2b \equiv 0 \bmod N$, as desired. If $\phi=0$, we note that $c_\varepsilon = (\varepsilon\cdot \operatorname{Id})\cdot c_1$, so $c_\varepsilon \cdot c_{\delta,0}(a,b) = c_{\delta,0}(a,b) \cdot c_\varepsilon$ implies $b\phi \equiv 0 \bmod N$ and $2b \equiv 0 \bmod N$ as well.
\end{proof}

As a corollary, we obtain a simple way to prove that a subgroup is non-abelian. 

\begin{corollary}\label{nonabelian}
Let $N> 2$ and let  $G\subseteq \mathcal{N}_{\delta,\phi}(N)$ be a subgroup. If $c_1\in G$ (or $\phi=0$ and $c_\varepsilon\in G$) and $c_{\delta,\phi}(a,b)\in G$ with $b\in (\Z/N\Z)^\times$, then $G$ is non-abelian.
\end{corollary}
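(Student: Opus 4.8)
The plan is to derive Corollary \ref{nonabelian} directly from Lemma \ref{abelian_conditions} by a contrapositive argument. Suppose for contradiction that $G$ is abelian. Since $c_1 \in G$ (or $\phi = 0$ and $c_\varepsilon \in G$) and $c_{\delta,\phi}(a,b) \in G$, the hypothesis that $G$ is abelian forces these two elements to commute. Lemma \ref{abelian_conditions} then applies and yields the two congruences $b\phi \equiv 0 \bmod N$ and $2b \equiv 0 \bmod N$. The whole argument thus reduces to showing that these two congruences are incompatible with the standing assumption that $N > 2$ and $b \in (\Z/N\Z)^\times$.

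The key observation is that $b \in (\Z/N\Z)^\times$ means $b$ is a unit, hence cancellable modulo $N$. From $2b \equiv 0 \bmod N$ I can multiply by $b^{-1}$ to conclude $2 \equiv 0 \bmod N$, i.e. $N \mid 2$. Since $N > 2$ this is impossible, giving the contradiction immediately. (One need not even invoke the first congruence, though one could equally run the argument through $b\phi \equiv 0$ only when $\phi$ is itself a unit, which is not guaranteed; the cleaner route is via $2b \equiv 0$.) Therefore no such $G$ can be abelian, which is precisely the claim.

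I expect there to be essentially no obstacle here, as the corollary is a short deduction from the lemma; the only point requiring a moment's care is making sure the hypotheses of Lemma \ref{abelian_conditions} are met in the $\phi = 0$ case. In that branch the lemma explicitly allows $c_\varepsilon$ in place of $c_1$ and still delivers the same conclusion $2b \equiv 0 \bmod N$, so the unit-cancellation step goes through verbatim. I would write the proof in just a few lines: assume abelianness, commute the two displayed generators, apply the lemma, cancel the unit $b$ in $2b \equiv 0$, and contradict $N > 2$.
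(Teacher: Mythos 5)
Your proposal is correct and follows exactly the paper's argument: assume $G$ abelian, apply Lemma \ref{abelian_conditions} to obtain $2b\equiv 0 \bmod N$, cancel the unit $b$ to get $2\equiv 0\bmod N$, and contradict $N>2$. Your side remark that the congruence $b\phi\equiv 0$ is not needed is also consistent with how the paper uses the lemma here.
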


\begin{proof}
    Assume for a contradiction that $G\subseteq \mathcal{N}_{\delta,\phi}(N)$ is abelian. Then, $c_1$ (or $c_\varepsilon$ if $\phi=0$) and $c_{\delta,\phi}(a,b)$ commute, and by Lemma \ref{abelian_conditions} we have that $b\phi \equiv 0 \bmod N$ and $2b \equiv 0 \bmod N$. If $N>2$ and $b\in (\Z/N\Z)^\times$, then the equivalence $2b \equiv 0 \bmod N$ implies that $2 \equiv 0 \bmod N$, which is not possible since $N>2$. Therefore, $G$ cannot be abelian.  
\end{proof}

The following lemma determines the isomorphism type of $\mathcal{N}_{\delta,\phi}(2) $.

\begin{lemma}\label{lem-abelianincase2} Let $N=2$.
$$\mathcal{N}_{\delta,\phi}(2) \cong \begin{cases}
    S_3 & \text{ if } \delta\equiv \phi \equiv 1 \bmod 2, \text{ or}\\
    \Z/2\Z & \text{ otherwise.}
\end{cases}$$
\end{lemma}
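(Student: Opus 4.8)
The plan is to exploit that every entry of $c_{\delta,\phi}(a,b)$ and of $c_1$ lies in $\Z/2\Z$, so the group $\mathcal{N}_{\delta,\phi}(2)$ depends only on the residues of $\delta$ and $\phi$ modulo $2$; it therefore suffices to treat the four cases determined by $(\delta \bmod 2,\phi\bmod 2)$. In each case I would first pin down the Cartan subgroup $\cC_{\delta,\phi}(2)$ by deciding which pairs $(a,b)\in(\Z/2\Z)^2$ give an invertible matrix. Over $\Z/2\Z$ one has $\det c_{\delta,\phi}(a,b)=a^2+ab\phi-\delta b^2\equiv a+ab\phi+\delta b \bmod 2$, and since $(\Z/2\Z)^\times=\{1\}$ invertibility means this equals $1$. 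Since $(0,0)$ is never invertible, I only need to test $(a,b)\in\{(1,0),(0,1),(1,1)\}$: one finds $\det\equiv 1$ for $(1,0)$ always, for $(0,1)$ iff $\delta\equiv 1$, and for $(1,1)$ iff $\delta\equiv\phi$. Hence $\cC_{\delta,\phi}(2)$ has order $3$ exactly when $\delta\equiv\phi\equiv 1\bmod 2$, and order $\le 2$ otherwise — consistent with Lemma \ref{lem-cartanisabelian}, as $(\mathcal{O}_{K,f}/2\mathcal{O}_{K,f})^\times\cong\F_4^\times\cong\Z/3\Z$ precisely in the inert case.

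For the three ``otherwise'' cases I would simply exhibit the elements, using $c_1\equiv\left(\begin{smallmatrix}1&0\\\phi&1\end{smallmatrix}\right)\bmod 2$. When $\delta\equiv\phi\equiv 0$ the Cartan is $\{\mathrm{Id},c_{\delta,0}(1,1)\}\cong\Z/2\Z$ and $c_1=\mathrm{Id}$; when $\delta\equiv 1,\phi\equiv 0$ again $\cC\cong\Z/2\Z$ (generated by $c_{\delta,0}(0,1)$) and $c_1=\mathrm{Id}$; and when $\delta\equiv 0,\phi\equiv 1$ the Cartan is trivial but $c_1\equiv\left(\begin{smallmatrix}1&0\\1&1\end{smallmatrix}\right)$ has order $2$. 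In all three situations $\mathcal{N}_{\delta,\phi}(2)=\langle\cC_{\delta,\phi}(2),c_1\rangle$ has exactly two elements, so $\mathcal{N}_{\delta,\phi}(2)\cong\Z/2\Z$.

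The one case needing more than bookkeeping is $\delta\equiv\phi\equiv 1\bmod 2$, where $\cC_{\delta,\phi}(2)=\{\mathrm{Id},c_{\delta,\phi}(0,1),c_{\delta,\phi}(1,1)\}\cong\Z/3\Z$ and $c_1\equiv\left(\begin{smallmatrix}1&0\\1&1\end{smallmatrix}\right)$ has order $2$. To finish I would show $\mathcal{N}_{\delta,\phi}(2)$ is non-abelian by applying Lemma \ref{abelian_conditions} to $c_1$ and $c_{\delta,\phi}(0,1)$: if they commuted we would need $b\phi\equiv 0\bmod 2$, but here $b=\phi=1$, a contradiction. (This is exactly the place where I must invoke Lemma \ref{abelian_conditions} directly, since Corollary \ref{nonabelian} is stated only for $N>2$.) Since $\cC_{\delta,\phi}(2)$ is abelian by Lemma \ref{lem-cartanisabelian} while $\mathcal{N}_{\delta,\phi}(2)$ is not, we get $c_1\notin\cC_{\delta,\phi}(2)$; and as $N=2$ is prime, $\cC_{\delta,\phi}(2)$ is normal of index $2$ in its normalizer, so $|\mathcal{N}_{\delta,\phi}(2)|=6$. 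A non-abelian group of order $6$ is $S_3$, as claimed. (Equivalently, $\cC_{\delta,\phi}(2)$ is the Sylow $3$-subgroup of $\GL(2,\F_2)\cong S_3$, whose normalizer is all of $\GL(2,\F_2)$.) The main point to get right is thus solely the non-abelian case; the rest is a finite, essentially mechanical enumeration, and the only genuine subtlety is that at $N=2$ one cannot cite Corollary \ref{nonabelian} and must fall back on Lemma \ref{abelian_conditions}.
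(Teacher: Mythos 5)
Your proof is correct and follows essentially the same route as the paper: a finite case check over the four parities of $(\delta,\phi)$ inside $\GL(2,\mathbb{Z}/2\mathbb{Z})\cong S_3$. The only cosmetic difference is how the inert case is clinched — the paper observes that the two order-$3$ matrices of $\GL(2,\mathbb{Z}/2\mathbb{Z})$ lie in $\mathcal{N}_{\delta,\phi}(2)$ exactly when $\delta\equiv\phi\equiv 1\bmod 2$ and that all six matrices are then distinct, whereas you size the Cartan via the determinant formula and get non-abelianness from Lemma \ref{abelian_conditions}; both are valid, and your remark that Corollary \ref{nonabelian} cannot be invoked at $N=2$ is well taken.
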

\begin{proof}
The following inclusion of matrices with coefficients in $\Z/2\Z$
$$\mathcal{N}_{\delta,\phi}(2)\subseteq \left\{\left(\begin{array}{cc} 1 & 0 \\ 0 & 1\\\end{array}\right), \left(\begin{array}{cc} 1+\phi & 1 \\ \delta  & 1\\\end{array}\right),\left(\begin{array}{cc} \phi & 1 \\ \delta  & 0\\\end{array}\right),\left(\begin{array}{cc} 1 & 0\\ \phi & 1\\\end{array}\right),\left(\begin{array}{cc} 1 & 1\\ \delta+\phi & 1\\\end{array}\right),\left(\begin{array}{cc} 0 & 1\\ \delta & 0\\\end{array}\right)\right\}$$
implies that the only elements of order $3$ in $\GL(2,\Z/2\Z)$, which are $\left(\begin{array}{cc} 1 & 1 \\ 1 & 0\\\end{array}\right)$ and $\left(\begin{array}{cc} 0 & 1 \\ 1 & 1\\\end{array}\right)$, only belong to $\mathcal{N}_{\delta,\phi}(2)$ when $\delta\equiv \phi\equiv 1\bmod 2$ (and if this is the case, all the matrices are distinct mod $2$ and $\mathcal{N}_{\delta,\phi}(2)\cong S_3$). Thus, if $\delta$ or $\phi\equiv 0 \bmod 2$, then $\mathcal{N}_{\delta,\phi}(2)$ is either trivial or isomorphic to $\Z/2\Z$ but at least one of the matrices
$$\left(\begin{array}{cc} 1 & 0\\ \phi & 1\\\end{array}\right),\left(\begin{array}{cc} 1 & 1\\ \delta+\phi & 1\\\end{array}\right),\left(\begin{array}{cc} 0 & 1\\ \delta & 0\\\end{array}\right)$$
is not the identity and belongs to $\mathcal{N}_{\delta,\phi}(2)$, and so $\mathcal{N}_{\delta,\phi}(2)\cong\Z/2\Z$ as desired.
\end{proof}

\begin{remark}\label{rem-surjectivity}
    Let $N=p_1^{e_1}p_2^{e_2}\cdots p_r^{e_r}$, where for $1\leq i \leq r$ the $p_i$ are prime and $e_i \in \Z^+$. Then, for each $p_i \mid N$, the projection map $\pi\colon \mathcal{N}_{\delta,\phi}(N) \to \mathcal{N}_{\delta,\phi}(p_i^e)$ is surjective, for any $1\leq e\leq e_i$. Indeed, by the Chinese remainder theorem, we have the following surjective  projection
    $$\mathcal{N}_{\delta,\phi}(N) \ \cong \ \prod_{i=1}^r \mathcal{N}_{\delta,\phi}(p_i^{e_i}) \ \to \ \mathcal{N}_{\delta,\phi}(p_i^{e_i}).$$ Moreover, if $c_{\delta,\phi}(\Bar{a},\Bar{b})\in \mathcal{N}_{\delta,\phi}(p_i^e)$, for some $1\leq e\leq e_i$ and for some $\Bar{a},\Bar{b}\bmod p_i^e$ such that $\det(c_{\delta,\phi}(\Bar{a},\Bar{b})) \not\equiv 0 \bmod p_i$,  then  $c_{\delta,\phi}(a,b)\in \mathcal{N}_{\delta,\phi}(p_i^{e_i})$ for any $a\equiv \Bar{a}$ and $b\equiv \Bar{b}\bmod p_i^e$, since $\det(c_{\delta,\phi}(a,b))\equiv \det(c_{\delta,\phi}(\Bar{a},\Bar{b}))\not\equiv 0 \bmod p_i$. Since $c_1$ maps to $c_1$, it follows that the projection $\mathcal{N}_{\delta,\phi}(p_i^{e_i})\to \mathcal{N}_{\delta,\phi}(p_i^e)$ is surjective as claimed.
\end{remark}

The following result tells us when the largest possible mod-$N$ CM image is abelian. 

\begin{theorem}\label{abelian_n=2}\label{only_abelian}\label{thm-fullnormalizerabelian}
   Let $N\geq 2$. The group $\mathcal{N}_{\delta,\phi}(N)$ is abelian if and only if  $N=2$ and $\phi\equiv 0 \bmod 2$ or $(\delta,\phi) \equiv (0,1) \bmod 2$. Moreover, if $\mathcal{N}_{\delta,\phi}(2)$ is abelian, then it is isomorphic to $\Z/2\Z$ as a group.
\end{theorem}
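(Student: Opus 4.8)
The statement combines an ``if and only if'' with a ``moreover'', and the plan is to dispose of $N=2$ entirely through Lemma \ref{lem-abelianincase2}, so that the real work is to prove $\mathcal{N}_{\delta,\phi}(N)$ is non-abelian for \emph{every} $N>2$. The organizing device for the latter is Remark \ref{rem-surjectivity}: since a quotient of an abelian group is abelian, and since $\mathcal{N}_{\delta,\phi}(N)$ surjects onto $\mathcal{N}_{\delta,\phi}(p^e)$ for each prime power $p^e\mid N$, it suffices to exhibit, for each $N>2$, a single prime power $q\mid N$ with $q>2$ for which $\mathcal{N}_{\delta,\phi}(q)$ is already non-abelian. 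Because any $N>2$ is divisible either by an odd prime $p$ or by $4$ (if $N=2^e>2$ then $e\geq 2$), I would reduce the whole problem to the two cases $q=p$ (odd prime) and $q=4$.

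For $N=2$, Lemma \ref{lem-abelianincase2} tells us $\mathcal{N}_{\delta,\phi}(2)\cong S_3$ exactly when $\delta\equiv\phi\equiv 1\bmod 2$ and $\mathcal{N}_{\delta,\phi}(2)\cong\Z/2\Z$ otherwise. Since $S_3$ is non-abelian and $\Z/2\Z$ is abelian, $\mathcal{N}_{\delta,\phi}(2)$ is abelian precisely when $(\delta,\phi)\not\equiv(1,1)\bmod 2$. The remaining step here is pure bookkeeping: the three residue classes $(\delta,\phi)\equiv(0,0),(1,0),(0,1)\bmod 2$ are exactly those covered by ``$\phi\equiv 0\bmod 2$ or $(\delta,\phi)\equiv(0,1)\bmod 2$'', matching the stated condition, and in all of them the group is $\Z/2\Z$, which gives the ``moreover'' clause at once.

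For $N>2$ I would argue at the two prime powers. If $p$ is an odd prime dividing $N$, then $c_1\in\mathcal{N}_{\delta,\phi}(p)$ and I want a Cartan element with $b$ a unit so that Corollary \ref{nonabelian} applies; taking $b=1$, the determinant is $a^2+a\phi-\delta$, and since $x^2+\phi x-\delta$ has at most two roots modulo $p\geq 3$ there is a choice of $a$ making it a unit, so $c_{\delta,\phi}(a,1)\in\mathcal{N}_{\delta,\phi}(p)$ and the group is non-abelian. If instead $4\mid N$, I would show $\mathcal{N}_{\delta,\phi}(4)$ is non-abelian directly from Lemma \ref{abelian_conditions} by exhibiting a Cartan element that fails to commute with $c_1$, via a short case analysis modulo $2$: $c_{\delta,\phi}(0,1)$ (with $2b=2\not\equiv 0\bmod 4$) works when $\delta$ is odd; $c_{\delta,\phi}(1,1)$ works when $\delta\equiv\phi\equiv 0\bmod 2$; and $c_{\delta,\phi}(1,2)$ (with $b\phi=2\phi\not\equiv 0\bmod 4$) works when $\phi$ is odd. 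These three cases exhaust all $(\delta,\phi)\bmod 2$, so $\mathcal{N}_{\delta,\phi}(4)$ is non-abelian. Projecting via Remark \ref{rem-surjectivity} then shows $\mathcal{N}_{\delta,\phi}(N)$ is non-abelian for every $N>2$, completing the ``only if'' direction; the ``if'' direction and ``moreover'' were already handled by the $N=2$ analysis.

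The main obstacle, and the reason the two prime powers must be treated differently, is the case $p=2$ with $(\delta,\phi)\equiv(0,1)\bmod 2$: there every invertible Cartan element is forced to have $b\equiv 0\bmod 2$ (the determinant reduces to $a(a+b)\bmod 2$, a unit only when $b$ is even), so $b$ can never be a unit and Corollary \ref{nonabelian} is unavailable. This is exactly why at $q=4$ one must descend to the finer obstruction of Lemma \ref{abelian_conditions} and use the element $c_{\delta,\phi}(1,2)$, whose $b=2$ satisfies $b\phi\not\equiv 0\bmod 4$ even though $2b\equiv 0\bmod 4$. Once this case is isolated, the surjectivity reduction makes a single representative prime power suffice for all composite $N$.
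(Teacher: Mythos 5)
Your proposal is correct and follows essentially the same route as the paper: dispose of $N=2$ via Lemma \ref{lem-abelianincase2}, reduce $N>2$ to the prime powers $q=4$ and $q=p$ odd via Remark \ref{rem-surjectivity}, and exhibit a Cartan element violating the commutation conditions of Lemma \ref{abelian_conditions}/Corollary \ref{nonabelian}, with the $q=4$ case handled by exactly the same three matrices $c_{\delta,\phi}(0,1)$, $c_{\delta,\phi}(1,1)$, $c_{\delta,\phi}(1,2)$. The only (minor, and in fact slightly cleaner) deviation is that for odd $p$ you replace the paper's case analysis on whether $\delta$ and $\phi$ are units with a single counting argument --- the quadratic $a^2+a\phi-\delta$ has at most two roots mod $p\geq 3$, so some $c_{\delta,\phi}(a,1)$ lies in the Cartan subgroup --- which is valid.
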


\begin{proof}
When $N=2$, the group $\mathcal{N}_{\delta,\phi}(N)$ is abelian if and only if $\phi\equiv 0 \bmod 2$ or $(\delta,\phi) \equiv (0,1) \bmod 2$, and if it is abelian, it is isomorphic to $\Z/2\Z$, by Lemma \ref{lem-abelianincase2}. Thus, we may assume $N\geq 3$. 

Let $N=p_1^{e_1}p_2^{e_2}\cdots p_r^{e_r}>2$. Then,  either $N$ is divisible by $4$ or there is a prime $p>2$ such that $N\equiv 0\bmod p$. Further, since the projection maps $\mathcal{N}_{\delta,\phi}(N)\to \mathcal{N}_{\delta,\phi}(p_i^e)$, for any $1\leq e\leq e_i$, are surjective (Remark \ref{rem-surjectivity}), it suffices to show that $\mathcal{N}_{\delta,\phi}(4)$ and $\mathcal{N}_{\delta,\phi}(p_i)$ for $p_i>2$ are non-abelian. Thus, let $N=q$ where $q=4$ or an odd prime $p$.

We have that $c_1\in \mathcal{N}_{\delta,\phi}(q)$. By Lemma \ref{abelian_conditions}, and in order to show that $\mathcal{N}_{\delta,\phi}(q)$ is non-abelian,  it suffices to exhibit an element $c_{\delta,\phi}(a,b)\in \mathcal{N}_{\delta,\phi}(q)$ such that $2b\not\equiv 0 \bmod q$ or $b\phi\not\equiv 0 \bmod q$. 
\begin{itemize}
	\item If $\delta$ is a unit mod $q$, then $c_{\delta,\phi}(0,1)=\left(\begin{array}{cc} \phi & 1\\ \delta & 0\\\end{array}\right)$ has determinant $\det(c_{\delta,\phi}(0,1))\equiv -\delta \bmod q$ (a unit), and therefore $c_{\delta,\phi}(0,1)\in \mathcal{N}_{\delta,\phi}(q)$. In this case $2b\equiv 2\not\equiv 0 \bmod q$.
 
	\item If $\delta$ and $\phi$ are not units mod $q$, then $c_{\delta,\phi}(1,1)=\left(\begin{array}{cc} 1+\phi & 1\\ \delta & 1\\\end{array}\right)$ has determinant $\det(c_{\delta,\phi}(1,1))\equiv 1+\phi-\delta \bmod q$ (a unit), and therefore $c_{\delta,\phi}(1,1)\in \mathcal{N}_{\delta,\phi}(q)$. In this case $2b\equiv 2\not\equiv 0 \bmod q$.
 
\item If $q=4$ and $\phi \equiv 1 \bmod 2$, then $c_{\delta,\phi}(1,2)=\left(\begin{array}{cc} 1+2\phi & 2\\ 2\delta & 1\\\end{array}\right)$ has determinant $\det(c_{\delta,\phi}(1,2))\equiv 1+2\phi\equiv 1\bmod 2$, and therefore $c_{\delta,\phi}(1,2)\in \mathcal{N}_{\delta,\phi}(4)$. In this case $b\phi\equiv 2\not\equiv 0 \bmod 4$.
 
	\item If $q=p$, and if $\delta\equiv 0 \bmod p$ and $\phi \not\equiv 0 \bmod p$, let $b\in (\Z/p\Z)^\times$ such that $b\not\equiv -\phi^{-1} \bmod p$. Notice that $p\geq 3$ so there are $p-2>0$ choices for $b\in (\Z/p\Z)^\times \setminus \{-\phi^{-1}\bmod p\}$. Then, $c_{\delta,\phi}(1,b)=\left(\begin{array}{cc} 1+b\phi & b\\ \delta b & 1\\\end{array}\right)$ has determinant $\det(c_{\delta,\phi}(1,b))\equiv 1+b\phi\not\equiv 0\bmod p$, and therefore $c_{\delta,\phi}(1,b)\in \mathcal{N}_{\delta,\phi}(p)$. In this case $2b\not\equiv 0 \bmod p$.
\end{itemize}
Thus, in all cases, there is $c_1$ and a Cartan element $c_{\delta,\phi}(a,b)\in \mathcal{N}_{\delta,\phi}(q)$ that show (via Lemma \ref{abelian_conditions}) that $\mathcal{N}_{\delta,\phi}(q)$ cannot be abelian for $q=4$ or an odd prime $p$. This concludes the proof of the theorem.
\end{proof}


\section{Abelian and non-abelian division fields in the prime power case}\label{applying_results}

In \cite{lozano-galoiscm}, the second author explicitly describes the groups of $\GL(2, \Z_p)$ that can occur as images of the Galois representation $\rho_{E,p^\infty}\colon \GQ \to \Aut(T_p(E))\cong \GL(2,\Z_p)$, up to conjugation, for an elliptic curve $E/\Q(j_{K,f})$ with CM by an arbitrary order $\mathcal{O}_{K,f}$. In this section, we will apply the results from Section \ref{proofs} to all possible images $G_{E,N} = \operatorname{im} \rho_{E,N}$ from \cite{lozano-galoiscm} and analyze under what circumstances we have that  $G_{E,N}$ is abelian, when $N$ is a prime power. We will deal with general composite values of $N$ in Section \ref{sec-proof-main-theorem1}.

We begin with the case of primes that do not divide the discriminant of the order.

\begin{prop}\label{prop-goodrednprimes}
Let $E/\Q(j_{K,f})$ be an elliptic curve with CM by an order $\Of$ of $K$ of conductor $f\geq 1$, with $j_{K,f}\neq 0$. Let $p$ be a prime that does not divide $2\Delta_Kf$. Then, the image $G_{E,p}$ of $\rho_{E,p}$, and $G_{E,p^\infty}$ are non-abelian. 
\end{prop}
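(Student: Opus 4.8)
The plan is to argue by contradiction, assuming $G_{E,p}$ is abelian and deriving a numerical contradiction for all but a few small primes, which are then dispatched by extra input. Non-abelianness of $G_{E,p^\infty}$ comes for free once $G_{E,p}$ is known to be non-abelian: reduction modulo $p$ realizes $G_{E,p}=\operatorname{im}\rho_{E,p}$ as a quotient of $G_{E,p^\infty}=\operatorname{im}\rho_{E,p^\infty}$, and a group with a non-abelian quotient is itself non-abelian. So I would focus entirely on $G_{E,p}$.

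First I would record the relevant structure, writing $C=\mathcal{C}_{\delta,\phi}(p)$ and $\mathcal{N}=\mathcal{N}_{\delta,\phi}(p)$, so that $[\mathcal{N}:C]=2$. Since $p\nmid 2\Delta_K f$, the prime $p$ is odd, coprime to the conductor, and unramified in $\mathcal{O}_{K,f}$, so $\mathcal{O}_{K,f}/p\mathcal{O}_{K,f}$ is either $\mathbb{F}_{p^2}$ ($p$ inert) or $\mathbb{F}_p\times\mathbb{F}_p$ ($p$ split); by Lemma \ref{lem-cartanisabelian}, $C\cong(\mathcal{O}_{K,f}/p\mathcal{O}_{K,f})^\times$ has order $p^2-1$ or $(p-1)^2$, while its subgroup of scalar matrices has order $p-1$. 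The crucial point is that $G_{E,p}\not\subseteq C$: the field $\mathbb{Q}(j_{K,f})$ is totally real, so complex conjugation lies in $\Gal(\overline{\mathbb{Q}(j_{K,f})}/\mathbb{Q}(j_{K,f}))$ and acts nontrivially on $K$, hence does not commute with the CM action and maps under $\rho_{E,p}$ into $\mathcal{N}\setminus C$. Finally, since $p$ is odd no nontrivial unit of $\mathcal{O}_{K,f}$ is congruent to $1$ modulo $p$, so $\mathcal{O}_{K,f,p}^\times=\{1\}$, and Theorem \ref{thm-cmrep-intro-alvaro}(1) gives that $[\mathcal{N}:G_{E,p}]=[C:G_{E,p}\cap C]$ divides $w:=|\mathcal{O}_{K,f}^\times|$, which equals $2$ or $4$ because $j_{K,f}\neq 0$.

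Now suppose $G_{E,p}$ is abelian. Choosing any $g\in G_{E,p}\setminus C$, conjugation by $g$ induces on $C\cong(\mathcal{O}_{K,f}/p\mathcal{O}_{K,f})^\times$ the nontrivial automorphism $\sigma$ coming from the nontrivial $\mathbb{F}_p$-automorphism of $\mathcal{O}_{K,f}/p\mathcal{O}_{K,f}$ (this is exactly the computation behind Lemma \ref{abelian_conditions}, whose conclusion $b\equiv 0\bmod p$ says that a Cartan element commuting with the reflection must be scalar). Abelianness forces $G_{E,p}\cap C\subseteq C^\sigma$, and $C^\sigma$ is precisely the group of scalars, of order $p-1$. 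Hence $[C:G_{E,p}\cap C]\geq (p^2-1)/(p-1)=p+1$ in the inert case and $\geq(p-1)^2/(p-1)=p-1$ in the split case; comparing with $[C:G_{E,p}\cap C]\mid w\leq 4$ leaves only $p\leq 3$ (inert) and $p\leq 5$ (split). The prime $p=5$ is then eliminated by the Weil pairing: $\det\circ\rho_{E,p}$ is the surjective mod-$p$ cyclotomic character, scalars have square determinant, and the image of complex conjugation has determinant $-1$, so $G_{E,p}\cap C\subseteq\{\text{scalars}\}$ would force $\det(G_{E,p})\subseteq(\mathbb{F}_p^\times)^2\cup(-1)(\mathbb{F}_p^\times)^2$, a proper subgroup whenever $-1$ is a square, i.e. whenever $p\equiv 1\bmod 4$. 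For every remaining prime this contradicts surjectivity of the determinant, so $G_{E,p}\cap C$ contains a non-scalar $c_{\delta,\phi}(a,b)$ with $b\in(\mathbb{Z}/p\mathbb{Z})^\times$, and one could alternatively finish directly through Corollary \ref{nonabelian}.

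I expect the real obstacle to be the single prime $p=3$ with $p\equiv 3\bmod 4$, where both the counting bound and the determinant are inconclusive: $\mathcal{N}_{\delta,\phi}(3)$ genuinely contains abelian subgroups of index dividing $w$ whose Cartan part is only $\{\pm\mathrm{Id}\}$ (for example, for $j_{K,f}=1728$ with $3$ inert, the group $\langle -\mathrm{Id},\,g\rangle$ for an involution $g\in\mathcal{N}_{\delta,\phi}(3)\setminus C$ of determinant $-1$). Excluding this configuration cannot be done by group theory alone and needs CM-specific information: I would either invoke the explicit list of possible images $G_{E,3}$ for good primes from \cite{lozano-galoiscm} used throughout this section, or argue by class field theory that the image of $\rho_{E,3}$ on $\Gal(\overline{\mathbb{Q}(j_{K,f})}/K\mathbb{Q}(j_{K,f}))$ is strictly larger than $\{\pm\mathrm{Id}\}$, since the associated $3$-torsion field properly contains $K\mathbb{Q}(j_{K,f})(\zeta_3)$ (here using $3\nmid\Delta_K f$, so that $\zeta_3\notin K$). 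This CM input, rather than the uniform argument above, is where the genuine work lies.
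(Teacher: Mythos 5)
Your proposal is correct in outline but takes a genuinely different route from the paper. The paper's proof is two lines: it invokes the much stronger input \cite[Thm.~1.2(4)]{lozano-galoiscm}, which says that for $p\nmid 2\Delta_Kf$ and $j_{K,f}\neq 0$ the image of $\rho_{E,p^\infty}$ is \emph{all} of $\mathcal{N}_{\delta,\phi}(p^\infty)$, and then applies Theorem \ref{thm-fullnormalizerabelian} to conclude that the full normalizer mod $p$ is non-abelian. You instead use only the index bound of Theorem \ref{thm-cmrep-intro-alvaro}(1) together with the structure of $\mathcal{N}_{\delta,\phi}(p)$: abelianness plus an element outside the Cartan forces the Cartan part of the image into the scalars, and comparing $[\mathcal{C}_{\delta,\phi}(p):\{\text{scalars}\}]\in\{p-1,p+1\}$ with the divisor-of-$|\mathcal{O}_{K,f}^\times|\leq 4$ bound eliminates all $p>5$, with $p=5$ killed by the determinant. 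This is more self-contained (it does not need the full image classification from the reference) and it correctly isolates $p=3$ as the one case that group theory plus the index bound cannot decide.

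Two caveats. First, your justification that $G_{E,p}\not\subseteq\mathcal{C}_{\delta,\phi}(p)$ via ``complex conjugation lies in the absolute Galois group of $\Q(j_{K,f})$ because that field is totally real'' is not quite right: for a general Galois conjugate $j_{K,f}$, the field $\Q(j_{K,f})$ need not be real. What is true, and suffices, is that $K\not\subseteq\Q(j_{K,f})$ (a degree count inside the ring class field), so there exist $\sigma$ not fixing $K$, and these map outside the Cartan. Second, the $p=3$ case is left as a sketch; of your two proposed fixes, the class-field-theoretic one is correct and in fact makes the whole argument uniform. Writing $F=\Q(j_{K,f})$: since $p\nmid\Delta_Kf$, the ring class field $FK=H_f$ is unramified at $p$, hence linearly disjoint from $\Q(\zeta_p)$, so the mod-$p$ cyclotomic character is already surjective on $\Gal(\overline{F}/FK)$ and $\det(G_{E,p}\cap\mathcal{C}_{\delta,\phi}(p))=(\Z/p\Z)^\times$. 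This contradicts $G_{E,p}\cap\mathcal{C}_{\delta,\phi}(p)\subseteq\{\text{scalars}\}$ (whose determinants are squares) for every odd $p$, subsuming both your counting step and your $p=5$ argument, and closes your proof without appealing to the image classification.
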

\begin{proof}
    Let $E/\Q(j_{K,f})$ and $p$ be as in the statement. Then, \cite[Theorem 1.2.(4)]{lozano-galoiscm} implies that the image of $\rho_{E,p^\infty}$ is isomorphic to $\mathcal{N}_{\delta,\phi}(\Z_p^\infty)$ and, therefore, the image $G_{E,p}$ of $\rho_{E,p}$ is isomorphic to $\mathcal{N}_{\delta,\phi}(\Z/p\Z)$. Since $p>2$, now Theorem \ref{thm-fullnormalizerabelian} implies that $G_{E,p}$ cannot be abelian, and therefore $G_{E,p^\infty}$ is non-abelian.
\end{proof}

Next we treat the case of elliptic curves with $j(E)=0$ and $p>3$ (i.e., primes $p$ that do not divide $2f\Delta_K=-6$). Before we state and prove the next two propositions we remark that even though we generally define $\delta$ and $\phi$ and choose a basis of $T_p(E)$ such that the image of $\rho_{E,p^\infty}$ is contained in $\mathcal{N}_{\delta,\phi}(\Z_p^\infty)$, in some cases one can choose a different basis so that the image is contained in the simpler form group $\mathcal{N}_{\delta',0}(\Z_p^\infty)$, for some value of $\delta'\in \Z_p$. See the beginning of \cite[Section 5]{lozano-galoiscm} for more details on this.

\begin{prop}\label{prop-jzero-goodredn} 
    Let $E/\Q$ be an elliptic curve with $j(E)=0$ and let $p>3$ be a prime. Then, the image $G_{E,p}$ of $\rho_{E,p}$ is non-abelian. Therefore, the image of $\rho_{E,p^\infty}$ is also non-abelian. 
\end{prop}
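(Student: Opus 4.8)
The plan is to reduce to the two structural facts that drive Corollary \ref{nonabelian} and then run the commutator computation of Lemma \ref{abelian_conditions}. Since $j(E)=0$, the curve $E$ has CM by the maximal order $\OK$ of $K=\Q(\sqrt{-3})$, so $\Delta_K=-3$ and $f=1$. This is precisely the case excluded from Proposition \ref{prop-goodrednprimes}: here $\Of^\times=\OK^\times$ has order $6$, so by Theorem \ref{thm-cmrep-intro-alvaro} the image $G_{E,p}$ need not equal the full normalizer $\mathcal{N}_{\delta,\phi}(p)$ but only sits inside it with index dividing $6$, and we cannot simply quote Theorem \ref{thm-fullnormalizerabelian}. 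Because $p>3$ does not divide $\Delta_K$, I would first invoke the remark preceding this proposition to fix a basis of $E[p]$ for which $G_{E,p}\subseteq\mathcal{N}_{\delta,0}(p)$ with $\delta$ a unit modulo $p$ (so that $\cC_{\delta,0}(p)\cong(\OK/p\OK)^\times$ is nondegenerate), and set $H=G_{E,p}\cap\cC_{\delta,0}(p)$.

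Next I would record two facts about $G_{E,p}$. \emph{(i) $G_{E,p}$ has a non-Cartan element.} By the theory of complex multiplication, $H$ is the image of $\Gal(\overline{\Q}/K)$, and the quotient $G_{E,p}/H$ maps onto the nontrivial group $\mathcal{N}_{\delta,0}(p)/\cC_{\delta,0}(p)$ via the surjection $\Gal(\overline{\Q}/\Q)\twoheadrightarrow\Gal(K/\Q)$; since $K\neq\Q$, there is a non-Cartan element $g=c_{\delta,0}(a_0,b_0)\,c_1\in G_{E,p}$. \emph{(ii) $H$ contains a Cartan element with $b$ a unit.} The Weil pairing gives $\det\circ\rho_{E,p}=\chi_p$, the mod-$p$ cyclotomic character, and its restriction to $\Gal(\overline{\Q}/K)$ is surjective onto $(\Z/p\Z)^\times$ because $\Q(\zeta_p)$ and $K=\Q(\zeta_3)$ are linearly disjoint over $\Q$ (this uses $p>3$). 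The scalar matrices $a\cdot\mathrm{Id}\in\cC_{\delta,0}(p)$ have determinant $a^2$, realizing only squares in $(\Z/p\Z)^\times$; hence $H$ cannot consist of scalars, and there is $c_{\delta,0}(a,b)\in H$ with $b\not\equiv 0\bmod p$, i.e. $b\in(\Z/p\Z)^\times$.

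To conclude, suppose for contradiction that $G_{E,p}$ is abelian. Then $g$ commutes with $c_{\delta,0}(a,b)$. Repeating the computation in the proof of Lemma \ref{abelian_conditions}, now with the general non-Cartan element $c_{\delta,0}(a_0,b_0)c_1$ in place of $c_1$, the equality $g\cdot c_{\delta,0}(a,b)=c_{\delta,0}(a,b)\cdot g$ forces $a_0 b\equiv 0$ and $\delta b_0 b\equiv 0 \bmod p$; since $\delta$ and $b$ are units this yields $a_0\equiv b_0\equiv 0\bmod p$, whence $g$ is the zero matrix, contradicting $g\in\GL(2,\Z/p\Z)$. Therefore $G_{E,p}$ is non-abelian. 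Finally, as $G_{E,p}$ is a quotient of $G_{E,p^\infty}$ under reduction modulo $p$, the image of $\rho_{E,p^\infty}$ is non-abelian as well.

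The step I expect to be the crux is fact (ii): ruling out that the Cartan part $H$ degenerates into the scalar subgroup. This is exactly where $p>3$ enters, guaranteeing that $\Q(\zeta_3)$ and $\Q(\zeta_p)$ are linearly disjoint so that the cyclotomic character stays surjective after restriction to $\Gal(\overline{\Q}/K)$; this surjectivity neutralizes the index-$6$ ambiguity in the size of $H$ uniformly, including the small primes $p=5,7$ where $H$ could a priori have index as large as $6$ in $\cC_{\delta,0}(p)$. As an alternative I could bypass (i) and (ii) by quoting the explicit list of possible images $G_{E,p}$ for $j(E)=0$ and $p>3$ from \cite{lozano-galoiscm}, verifying in each case that the listed generators include a reflection $c_\varepsilon$ together with a Cartan element whose parameter $b$ is a unit, and then applying Corollary \ref{nonabelian} verbatim.
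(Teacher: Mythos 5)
Your proof is correct, but it takes a genuinely different route from the paper. The paper's proof quotes the explicit classification of $G_{E,p^\infty}$ for $j=0$ and $p>3$ from \cite[Theorem 1.4]{lozano-galoiscm}, splits into three cases according to $p\bmod 9$, and for each listed image exhibits a reflection together with a Cartan element with $b$ a unit, then applies Corollary \ref{nonabelian} (or a direct check for the split group appearing when $p\equiv 4,7\bmod 9$). You instead give a uniform, classification-free argument: (i) $G_{E,p}$ contains a non-Cartan element because $K\subseteq \Q(E[p])$ and Galois acts $\OK$-semilinearly, and (ii) the Cartan part contains a non-scalar element because $\det\circ\rho_{E,p}=\chi_p$ restricts surjectively to $G_K$ (linear disjointness of $\Q(\zeta_3)$ and $\Q(\zeta_p)$, which is exactly where $p>3$ enters) while scalars only realize square determinants; the commutator computation with a general non-Cartan element $c_{\delta,0}(a_0,b_0)c_1$ in place of $c_1$ then forces $a_0\equiv b_0\equiv 0$, a contradiction. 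I checked that computation and it is right (equivalently: conjugation by any element of $\mathcal{N}_{\delta,0}(p)\setminus\cC_{\delta,0}(p)$ sends $c_{\delta,0}(a,b)$ to $c_{\delta,0}(a,-b)$, so an abelian image forces $2b\equiv 0$). Your approach is shorter, avoids the mod-$9$ case analysis entirely, and in fact subsumes Proposition \ref{prop-goodrednprimes} as well; what the paper's route buys is consistency with the rest of Section \ref{applying_results}, where the explicit images are needed anyway for the finer index and Weierstrass-model statements. Two small points to tighten: your assertion that $H$ \emph{equals} $\rho_{E,p}(G_K)$ and that $G_{E,p}/H$ surjects onto $\mathcal{N}_{\delta,0}(p)/\cC_{\delta,0}(p)$ deserves the one-line justification via semilinearity (if $\rho(\sigma)$ with $\sigma|_K\neq\operatorname{id}$ lay in the abelian Cartan, conjugation by it would be trivial on $(\OK/p\OK)^\times$, contradicting that complex conjugation acts nontrivially for $p$ unramified); and only the containment $\rho_{E,p}(G_K)\subseteq H$ is actually needed for step (ii).
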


\begin{proof} Let $E/\Q$ be an elliptic curve with $j(E)=0$. Thus, $E$ has CM by $\Of$, where 
$K=\Q(\sqrt{-3})$, $f=1$, and $\OO_{K,f} = \OO_K = \Z[(1+\sqrt{-3})/2]$. Let $p>3$ be a prime. Then, the $p$-adic image of $\rho_{E,p^\infty}$ is described by Theorem 1.4 of \cite{lozano-galoiscm}. In particular, the image depends on the class of $p\bmod 9$, so we consider three separate cases, which in turn correspond to parts (1), (2), and (3) of \cite[Thm. 1.4]{lozano-galoiscm}.
\begin{enumerate}
    \item If $p\equiv \pm 1 \bmod 9$, then there is a $\Z_p$-adic basis of $T_p(E)$ such that $G_{E,p}\cong \mathcal{N}_{\delta',0}(p)$ and $G_{E,p^\infty}\cong  \mathcal{N}_{\delta',0}(p^\infty)$, where $\delta' = \Delta_K/4=-3/4$. Since $p>2$, by Theorem \ref{abelian_n=2}, we have that the image $G_{E,p}$ is non-abelian. Since $G_{E,p^\infty}\equiv G_{E,p}\bmod p$ and $G_{E,p}$ is non-abelian, the full $p$-adic image $G_{E,p^\infty}$ is also non-abelian. 

\item If $p\equiv 2$ or $5 \bmod 9$, then either $G_{E,p}\cong \mathcal{N}_{\delta',0}(p)$, or $G_{E,p}\cong \langle \cC_{\delta',0}(p)^3, c_1\rangle$, with $\delta'=-3/4$ as before. In the case that $G_{E,p}\cong \mathcal{N}_{\delta',0}(p)$, the image $G_{E,p}$ is non-abelian as in part (1). If $G_{E,p}\cong \langle \cC_{\delta',0}(p)^3, c_1 \rangle$, consider $c_{\delta',0}(0,1)^3 = c_{\delta',0}(0,\delta')\in \cC_{\delta',0}(p)^3$, where $b \equiv \delta' \bmod p$. Since $b\equiv \delta'\equiv -3/4\in (\Z/p\Z)^\times$, by Corollary \ref{nonabelian}, the image $G_{E,p}$ is non-abelian, and therefore $G_{E,p^\infty}$ is also non-abelian.

\item If $p\equiv 4$ or $7\bmod 9$, then either $G_{E,p^\infty}\cong\mathcal{N}_{\delta',0}(p^\infty)$, or there is a $\Z_p$-basis of $T_p(E)$ such that $G_{E,p^\infty}$ is isomorphic to the subgroup 
\[
H = \left\langle \left\{\left(\begin{array}{cc} a & 0\\ 0 & b\\ \end{array}\right) : a/b \in (\Z_p^\times)^3\right\}, \gamma = \left(\begin{array}{cc} 0 & 1\\ 1 & 0\\ \end{array}\right)\right\rangle.
\]
If $G_{E,p^\infty}=\mathcal{N}_{\delta',0}(p^\infty)$, then image $G_{E,p}$ and $G_{E,p^\infty}$ are non-abelian as before. If $G_{E,p^\infty}\cong H$, we note that 
$$\left(\begin{array}{cc} -1 & 0\\ 0 & 1\\ \end{array}\right)\cdot \left(\begin{array}{cc} 0 & 1\\ 1 & 0\\ \end{array}\right)=\left(\begin{array}{cc} 0 & -1\\ 1 & 0\\ \end{array}\right)\neq \left(\begin{array}{cc} 0 & 1\\ -1 & 0\\ \end{array}\right)=\left(\begin{array}{cc} 0 & 1\\ 1 & 0\\ \end{array}\right)\cdot \left(\begin{array}{cc} -1 & 0\\ 0 & 1\\ \end{array}\right),$$
shows that $H\bmod p$ is non-abelian, since $-1\in (\Z_p^\times)^3$. Hence $G_{E,p}\equiv H\bmod p$ is non-abelian for all $p>2$, and $G_{E,p^\infty}$ is also non-abelian. 
\end{enumerate}
Thus, in all cases we have that $G_{E,p}$ and $G_{E,p^\infty}$ are non-abelian groups.
\end{proof}

The next result describes the possible abelian images for an elliptic curve $E/\Q(j_{K,f})$ with CM by $\mathcal{O}_{K,f}$, for any $f\geq 1$, when $p>2$ divides $f\Delta_K$.

\begin{prop}\label{thm-badprimes-intro}\label{cor-abelianforp3}\label{prop-abelianforp3}
    Let $E/\Q(j_{K,f})$ be an elliptic curve with CM by $\Of$ of conductor $f\geq 1$. Let $p$ be an odd prime dividing $f\Delta_K$ (thus, $j\neq 1728$ where $f\Delta_K=-4$), and choose a $\Z_p$-basis of $T_p(E)$ such that $G_{E,p^\infty}$ is a subgroup of $\mathcal{N}_{\delta',0}(p^\infty)$ with $\delta'=\Delta_Kf^2/4$. Then, $G_{E,p^n}$ for $n\geq 2$, and $G_{E,p^\infty}$ are non-abelian. Moreover,   if $G_{E,p}$ is abelian, then $j_{K,f}=0$ and $p=3$, and we are in one of the two following cases according to the $3$-adic image of $E$:
        \begin{enumerate}
            \item  $G_{E,3^\infty}$ is conjugate to the subgroup generated by $c_{\varepsilon}$ and 
		$$H_1 = \left\{ \left(\begin{array}{cc} a & b\\ -3b/4 & a\\ \end{array}\right): a\in \Z_3^\times,\ b\equiv 0 \bmod 3 \right\}.$$
        In this case, $[\mathcal{N}_{\delta',0}(3^\infty):G_{E,3^\infty}]=3$ and $G_{E,3}\cong (\Z/2\Z)^2$ is abelian, but the image $G_{E,3^n}$ is non-abelian for $n\geq 2$. Moreover, $E/\Q$ can be given by Weierstrass form $y^2=x^3+d$ with $d\in\Z$ such that $d$ and $-3d$ are not squares and $-4d$ is a cube.

            \item  $G_{E,3^\infty}$ is conjugate  to the subgroup generated by $c_{\varepsilon}$ and one of
		$$ H_1' = \left\{ \left(\begin{array}{cc}  a & b\\ -3b/4 & a\\ \end{array}\right) : a\equiv 1,\ b\equiv 0 \bmod 3\Z_3 \right\}.$$
        In this case, $[\mathcal{N}_{\delta',0}(3^\infty):G_{E,3^\infty}]=6$ and $G_{E,3}\cong \Z/2\Z$ is abelian, but the image $G_{E,3^n}$ is non-abelian for $n\geq 2$. Moreover, $E/\Q$ can be given by Weierstrass form $y^2=x^3+d$ with $d\in\Z$ such that $d$ or $-3d$ is a square and $-4d$ is a cube.
        \end{enumerate}
\end{prop}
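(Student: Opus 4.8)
The plan is to run through the list of possible $p$-adic images $G_{E,p^\infty}$ for odd $p\mid f\Delta_K$ furnished by \cite{lozano-galoiscm}, and to test abelianness using Lemma \ref{abelian_conditions} and Corollary \ref{nonabelian}. Since we work in the $\phi=0$ form and $p$ is odd, $2$ is a unit modulo $p^n$, so Lemma \ref{abelian_conditions} reads: $c_\varepsilon$ commutes with $c_{\delta',0}(a,b)$ if and only if $b\equiv 0\bmod p^n$. The images described in \cite{lozano-galoiscm} are always of the shape $\langle(\text{Cartan part}),c_\varepsilon\rangle$, so a reflection $c_\varepsilon$ is always present; because the Cartan is abelian (Lemma \ref{lem-cartanisabelian}), the entire question reduces to whether $G_{E,p^n}\cap\cC_{\delta',0}(p^n)$ contains an element $c_{\delta',0}(a,b)$ with $b\not\equiv 0\bmod p^n$.

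First I would dispose of the case $j_{K,f}\neq 0$. Here $\Of^\times=\{\pm 1\}$ (as $j\neq 1728$ is also excluded), so by Theorem \ref{thm-cmrep-intro-alvaro}(1) the index $[\mathcal{N}_{\delta',0}(p):G_{E,p}]$ divides $2$; since $G_{E,p}$ contains a reflection, this forces $[\cC_{\delta',0}(p):G_{E,p}\cap\cC_{\delta',0}(p)]\mid 2$. I would then record the uniform structural fact that for odd $p\mid f\Delta_K$ one has $\Of/p\Of\cong\F_p[\varepsilon]/(\varepsilon^2)$ (dual numbers), so by Lemma \ref{lem-cartanisabelian} the Cartan $\cC_{\delta',0}(p)\cong(\Of/p\Of)^\times$ has order $p(p-1)$, with scalar subgroup of index $p\geq 3$. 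An index-$\leq 2$ subgroup of a group of order $p(p-1)$ cannot lie inside the scalars (order $p-1$) when $p\geq 3$, so $G_{E,p}\cap\cC_{\delta',0}(p)$ contains some $c_{\delta',0}(a,b)$ with $b\not\equiv 0\bmod p$; with $c_\varepsilon$ present, Lemma \ref{abelian_conditions} (or Corollary \ref{nonabelian}) shows $G_{E,p}$ is non-abelian, whence $G_{E,p^n}$ and $G_{E,p^\infty}$ are non-abelian.

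Next I would treat $j_{K,f}=0$, where $K=\Q(\sqrt{-3})$, $f=1$, $\Delta_K=-3$, so odd $p\mid f\Delta_K$ forces $p=3$, and $\Of^\times$ has order $6$ while $\cC_{\delta',0}(3)\cong(\OK/3\OK)^\times\cong\Z/6\Z$. By Theorem \ref{thm-cmrep-intro-alvaro}(1) the index $[\mathcal{N}_{\delta',0}(3):G_{E,3}]$ divides $6$, and $G_{E,3}$ is abelian exactly when $G_{E,3}\cap\cC_{\delta',0}(3)$ sits inside the scalar subgroup (order $2$), i.e. has index $3$ or $6$ in $\cC_{\delta',0}(3)$; indices $1$ and $2$ capture an element with $b\not\equiv0\bmod 3$ and are non-abelian. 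Matching indices $3$ and $6$ against the classification of possible $3$-adic images in \cite{lozano-galoiscm} yields exactly $\langle c_\varepsilon,H_1\rangle$ (index $3$, $G_{E,3}\cong(\Z/2\Z)^2$) and $\langle c_\varepsilon,H_1'\rangle$ (index $6$, $G_{E,3}\cong\Z/2\Z$). For $n\geq 2$, reducing $H_1$ and $H_1'$ modulo $9$ shows the constraint $b\equiv 0\bmod 3$ still permits $b=3\not\equiv 0\bmod 9$, so $G_{E,3^n}$ contains $c_{\delta',0}(a,3)$ with $2b=6\not\equiv 0\bmod 9$; by Lemma \ref{abelian_conditions} this fails to commute with $c_\varepsilon$, so $G_{E,3^n}$ ($n\geq 2$) and $G_{E,3^\infty}$ are non-abelian, establishing the non-abelian assertions in full.

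Finally I would translate the group data into the stated Weierstrass conditions. For $E:y^2=x^3+d$ the $3$-division polynomial factors as $3x(x^3+4d)$: the factor $x$ always supplies one rational $3$-isogeny with kernel $\langle(0,\sqrt{d})\rangle$, and $-4d$ being a cube is precisely the condition for $x^3+4d$ to have a rational root, giving a \emph{second} independent rational $3$-isogeny — equivalently, for the mod-$3$ Cartan part to be scalar ($b\equiv 0\bmod 3$), which is the defining feature of $H_1$ and $H_1'$. The finer split is governed by the existence of a genuine rational $3$-torsion point: $(0,\sqrt{d})$ is rational iff $d$ is a square, and $(\sqrt[3]{-4d},\sqrt{-3d})$ is rational iff $-3d$ is a square (and $d,-3d$ cannot both be squares, as $-3$ is not a square). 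No rational point forces the full image $(\Z/2\Z)^2$ (case $H_1$, index $3$: neither $d$ nor $-3d$ a square), while one rational point drops the index to $6$, giving $\Z/2\Z$ (case $H_1'$: $d$ or $-3d$ a square). I expect the main obstacle to be exactly this last translation — extracting from \cite{lozano-galoiscm} the precise correspondence between the index-$3$ and index-$6$ subgroups and the cube/square conditions on $d$, together with verifying that no other odd-$p$ image in the classification is abelian, which rests on the uniform dual-number description of $\Of/p\Of$ in both the ramified ($p\mid\Delta_K$) and conductor ($p\mid f$) cases.
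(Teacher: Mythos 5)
Your proposal is correct in substance and reaches the same conclusions, but two of its ingredients differ genuinely from the paper's proof. For $j_{K,f}\neq 0$ the paper simply runs through the explicit list of possible images in case (a) of \cite[Theorem 1.5]{lozano-galoiscm} and exhibits a Cartan element $c_{\delta',0}(1,1)$ with $b$ a unit; you instead combine the index bound from Theorem \ref{thm-cmrep-intro-alvaro} ($[\mathcal{N}_{\delta',0}(p):G_{E,p}]\mid 2$ since $\Of^\times=\{\pm1\}$) with the observation that $\Of/p\Of$ is the ring of dual numbers for every odd $p\mid f\Delta_K$, so $\cC_{\delta',0}(p)$ has order $p(p-1)$ and its scalar subgroup has index $p\geq 3$; a counting argument then forces a non-scalar Cartan element into $G_{E,p}$. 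This is more uniform and avoids quoting the shape of the image, at the small cost of needing the (true, and implicit in the classification) fact that $G_{E,p}$ contains a reflection. For the Weierstrass conditions the paper cites Zywina's Proposition 1.16 and his list of five mod-$3$ images for $j=0$, whereas you derive them directly from the $3$-division polynomial $3x(x^3+4d)$ and the rationality of $(0,\sqrt{d})$ and $(\sqrt[3]{-4d},\sqrt{-3d})$; this is a legitimate alternative, though the identification of ``two independent rational $3$-isogenies'' with ``$b\equiv 0\bmod 3$ in the chosen $\mathcal{N}_{\delta',0}$-basis'' deserves a sentence of justification. The one place you compress too much is the phrase ``matching indices $3$ and $6$ against the classification yields exactly $H_1$ and $H_1'$'': the index of $G_{E,3^\infty}$ in $\mathcal{N}_{\delta',0}(3^\infty)$ does \emph{not} determine whether the mod-$3$ Cartan part is scalar — the groups $H_2,H_3$ (resp.\ $H_2',H_3'$) in \cite{lozano-galoiscm} also have index $3$ (resp.\ $6$) yet contain elements with $b\in(\Z/3\Z)^\times$ and are non-abelian mod $3$ — so you must still perform the finite check on all seven proper subgroups in the list, exactly as the paper does. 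Your mod-$9$ argument (taking $b=3$ with $2b\not\equiv 0\bmod 9$) matches the paper's (which uses $b=6$) and correctly settles $n\geq 2$.
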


\begin{proof}
 Let $E/\Q(j_{K,f})$ be an elliptic curve with CM by $\Of$ of conductor $f\geq 1$. Let $p$ be an odd prime dividing $f\Delta_K$ (this assumption automatical excludes $j_{K,f}=1728$). The image of $\rho_{E,p^\infty}$ is given in this case by \cite[Theorem 1.5]{lozano-galoiscm}. There are two cases to consider according to whether $j_{K,f}\neq 0$ or $j_{K,f}=0$, which correspond accordingly to cases (a) and (b) of the statement of \cite[Thm. 1.5]{lozano-galoiscm}.
\begin{enumerate}
    \item[(a)] If $j_{K,f}\neq 0,1728$, then either $G_{E,p^\infty} \cong  \mathcal{N}_{\delta',0}(p^\infty)$ or $G_{E,p^\infty}$ is isomorphic to the group of $\GL(2,\Z_p)$ generated by $c_\varepsilon$ and 
\[
H = \left\{ \left(\begin{array}{cc} a & b\\ \delta b & a\\ \end{array}\right): a\in {\Z_p^\times}^2, b\in  \Z_p\right\},
\]
where $\delta' = \Delta_Kf^2/4\equiv 0 \bmod p$. 
In the case that $G_{E,p^\infty} \cong \mathcal{N}_{\delta',0}(p^\infty)$, by Theorem \ref{abelian_n=2}, since $p>2$, we have that the image $G_{E,p}$ is non-abelian, and hence, the image $G_{E,p^\infty}$ is also non-abelian. Now let us consider the case that $G_{E,p^\infty}\cong  \langle c_\varepsilon, H\rangle$. Observe that $c_{\delta',0}(1,1) \in G_{E,p}$ (its determinant is a unit congruent to $1^2\equiv 1 \bmod p$). Since $b\equiv 1 \in (\Z/p\Z)^\times$, Corollary \ref{nonabelian} implies that the image $G_{E,p}$ is non-abelian, and hence, the image $G_{E,p^\infty}$ is also non-abelian. 

\item[(b)] Now let $j_{K,f}=0$, and so $\Delta_Kf^2 = -3$. Then, $p=3$ and either $G_{E,3^\infty} \cong \mathcal{N}_{\delta',0}(3^\infty)$ or $[\mathcal{N}_{\delta',0}(3^\infty):G_{E,3^\infty}]=2,3,$ or $6$ and one of the upcoming (i), (ii), or (iii) hold. In the case that $G_{E,3^\infty} \cong \mathcal{N}_{\delta',0}(3^\infty)$, the  images $G_{E,3}$ and  $G_{E,3^\infty}$ are non-abelian as before. If that is not the case, then $[\mathcal{N}_{\delta',0}(3^\infty):G_{E,3^\infty}]=2,3,$ or $6$, which we consider as separate cases.
\begin{enumerate}
    \item[(i)] If $[\mathcal{N}_{\delta',0}(3^\infty):G_{E,3^\infty}]=2$, then $G_{E,3^\infty}$ is generated by $c_\varepsilon$ and \[\left\{ \left(\begin{array}{cc}  a & b\\ -3b/4 & a\\ \end{array}\right): a,b\in  \Z_3,\ a\equiv 1 \bmod 3\right\}.\] Let $a,b \equiv 1 \bmod 3$. Observe that $c_{\delta',0}(1,1) \in G_{E,3}$. Since $b \in (\Z/3\Z)^\times$, by Corollary \ref{nonabelian}, we have that $G_{E,3}$ and  $G_{E,3^\infty}$ are non-abelian. 

    \item[(ii)] If $[\mathcal{N}_{\delta',0}(3^\infty):G_{E,3^\infty}]=3$, then we have the following three cases:
    \begin{itemize}
        \item[(1)] $G_{E,3^\infty}$ is generated by $c_\varepsilon$ and 
        \[
        H_1 = \left\{ \left(\begin{array}{cc} a & b\\ -3b/4 & a\\ \end{array}\right): a\in \Z_3^\times,\ b\equiv 0 \bmod 3 \right\}.
        \]
        Observe that $G_{E,3}$ is generated by the diagonal matrices (i.e., $H_1\bmod 3$) and $c_\varepsilon$, which commute. Therefore, the image $G_{E,3}$ is abelian in this case and $G_{E,3}\cong \Z/2\Z\oplus \Z/2\Z$. However, we will show that the image $G_{E,9}$ with index 3 is non-abelian. Let $a\equiv 1 \bmod 9$ and $b \equiv 6 \bmod 9$, so $a \in (\Z/3\Z)^\times$ and $b \equiv 0 \bmod 3$. Observe that $c_{\delta',0}(1,6) \in G_{E,9}$ since its determinant is congruent to $1 \bmod 3$, and $2b \equiv 12 \not\equiv 0 \bmod 9$. Thus, by Lemma \ref{abelian_conditions}, we have that the image $G_{E,9}$ is non-abelian. Therefore, $G_{E,3^\infty} = \langle c_\varepsilon, H_1\rangle$ is non-abelian, and the image $G_{E,3^n}\cong  \langle c_\varepsilon, H_1\rangle$ is non-abelian for $n\geq 2$.

        \item[(2)] $G_{E,3^\infty}$ is generated by $c_\varepsilon$ and 
        \[
        H_2 = \left\langle \left(\begin{array}{cc}  2 & 0\\ 0 & 2\\ \end{array}\right) ,\left(\begin{array}{cc}  1 & 1\\ -3/4 & 1\\ \end{array}\right)\right\rangle.
        \]
        In this case, observe that $c_{\delta',0}(1,1) \in  G_{E,3}$. Since $b \in (\Z/3\Z)^\times$, by Corollary \ref{nonabelian}, we have that the image $G_{E,3}$ is non-abelian. Therefore, the image $G_{E,3^\infty}= \langle c_\varepsilon, H_2\rangle$ with index 3 is non-abelian. 

        \item[(3)] $G_{E,3^\infty}$ is generated by $c_\varepsilon$ and 
        \[
        H_3 = \left\langle \left(\begin{array}{cc}  2 & 0\\ 0 & 2\\ \end{array}\right) ,\left(\begin{array}{cc}  -5/4 & 1/2\\ -3/8 & -5/4\\ \end{array}\right)\right\rangle.
        \]
        In this case, observe that $c_{\delta',0}(-5/4,1/2) \in G_{E,3}$. Since $b\equiv 1/2 \equiv 2 \in (\Z/3\Z)^\times$, by Corollary \ref{nonabelian}, we have that the image $G_{E,3}$ is non-abelian. Therefore, the image $G_{E,3^\infty} = \langle c_\varepsilon, H_3\rangle$ with index 3 is non-abelian.
    \end{itemize}

    \item[(iii)] If $[\mathcal{N}_{\delta',0}(3^\infty):G_{E,3^\infty}]=6$, then we have the following three cases:
    \begin{itemize}
        \item[(1)] $G_{E,3^\infty}$ is generated by $c_\varepsilon$ and 
        \[
        H_1' = \left\{ \left(\begin{array}{cc}  a & b\\ -3b/4 & a\\ \end{array}\right) : a\equiv 1,\ b\equiv 0 \bmod 3\Z_3 \right\}.
        \]
        Observe that $H_1' \equiv \{\operatorname{Id}\} \bmod 3$, so $G_{E,3}\equiv \langle c_\varepsilon,H_1'\rangle \equiv \{\operatorname{Id},c_\varepsilon\}\bmod 3$ is abelian and isomorphic to $\Z/2\Z$. However, we claim that the image $G_{E,9}$ is non-abelian. Let $a\equiv 1 \bmod 9$ and $b \equiv 6 \bmod 9$, so $a \in (\Z/3\Z)^\times$ and $b \equiv 0 \bmod 3$. Observe that $c_{\delta',0}(1,6) \in G_{E,9}$ and $2b \equiv 12 \not\equiv 0 \bmod 9$. Thus, by Lemma \ref{abelian_conditions}, we have that the image $G_{E,9}$ is non-abelian. Therefore, $G_{E,3^\infty} = \langle c_\varepsilon, H_1'\rangle$ is non-abelian, and the image $G_{E,3^n}\cong  \langle c_\varepsilon, H_1'\rangle$ is non-abelian for $n\geq 2$.

        \item[(2)] $G_{E,3^\infty}$ is generated by $c_\varepsilon$ and 
        \[
        H_2' = \left\langle \left(\begin{array}{cc}  4 & 0\\ 0 & 4\\ \end{array}\right) ,\left(\begin{array}{cc}  1 & 1\\ -3/4 & 1\\ \end{array}\right)\right\rangle.
        \]
        In this case, observe that $c_{\delta',0}(1,1) \in G_{E,3}$. Since $b \in (\Z/3\Z)^\times$, by Corollary \ref{nonabelian}, we have that the image $G_{E,3}$ is non-abelian. Therefore, the image $G_{E,3^\infty} = \langle c_\varepsilon, H_2'\rangle$ with index 6 is non-abelian.

        \item[(3)] $G_{E,3^\infty}$ is generated by $c_\varepsilon$ and 
        \[
        H_3' = \left\langle \left(\begin{array}{cc}  4 & 0\\ 0 & 4\\ \end{array}\right) ,\left(\begin{array}{cc}  -5/4 & 1/2\\ -3/8 & -5/4\\ \end{array}\right)\right\rangle.
        \]
        In this case, observe that $c_{\delta',0}(-5/4,1/2) \in G_{E,3}$. Since $b\equiv 1/2\equiv 2 \in (\Z/3\Z)^\times$, by Corollary \ref{nonabelian}, we have that the image $G_{E,3}$ is non-abelian. Therefore, the image $G_{E,3^\infty} = \langle c_\varepsilon, H_3'\rangle$ with index 6 is non-abelian.
    \end{itemize}
\end{enumerate}
\end{enumerate}
Thus in all cases $G_{E,p^\infty}$ and $G_{E,p^n}$ for $n\geq 2$ are non-abelian as claimed, and the only cases where $G_{E,p}$ is abelian are as described in the statement. The Weierstrass forms given in the statement follow from Proposition 1.16 in \cite{zywina}. Indeed, Zywina shows that there are $5$ possible images modulo $3$ for an elliptic curve $E/\Q$ with $j(E)=0$, namely $B(3)$, $H_{3,1}$, $H_{3,2}$, $H_{1,1}$, or $G_1$, in the notation of \cite[Section 1.2]{zywina}. Out of these, it is easy to check that only $G_1$ and $H_{1,1}$ are abelian, of index $3$ and $6$, respectively. This concludes the proof.
\end{proof}

Before we move on to study the mod-$2^n$ and $2$-adic images of CM curves, we write down a corollary for $N=2$ that follows from our previous Lemma \ref{lem-abelianincase2} and Theorem \ref{abelian_n=2}, which established that $\mathcal{N}_{\delta,\phi}(2)$ is abelian if and only if $\phi\equiv 0 \bmod 2$ or $(\delta,\phi) \equiv (0,1) \bmod 2$.

\begin{prop}\label{prop-case_of_n_eq_2}
    Let $E/\Q(j_{K,f})$ be an elliptic curve with CM by an order $\Of$ in an imaginary quadratic field $K$, and let $G_{E,2}=\operatorname{Im}(\rho_{E,2})$. 
    \begin{enumerate}
        \item If $G_{E,2}\subsetneq \mathcal{N}_{\delta,\phi}(2)$, then $G_{E,2}$ is abelian.
        \item If $G_{E,2}\cong \mathcal{N}_{\delta,\phi}(2)$, then $G_{E,2}$ is abelian if and only if $G_{E,2}\cong \Z/2\Z$ and either
        \begin{enumerate}
            \item $\Delta_Kf^2\equiv 0 \bmod 4$, or 
            \item  $\Delta_K\equiv 1 \bmod 8$ and $f\equiv 1 \bmod 2$.
        \end{enumerate}
    \end{enumerate}
\end{prop}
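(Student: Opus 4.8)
The plan is to reduce everything to the known structure of $\mathcal{N}_{\delta,\phi}(2)$. Projecting the $\widehat{\Z}$-image from part~(2) of Theorem~\ref{thm-cmrep-intro-alvaro} down to level~$2$ (the projection $\mathcal{N}_{\delta,\phi}\to\mathcal{N}_{\delta,\phi}(2)$ being surjective) gives $G_{E,2}\subseteq\mathcal{N}_{\delta,\phi}(2)$, and Lemma~\ref{lem-abelianincase2} tells us this ambient group is isomorphic to $S_3$ (when $\delta\equiv\phi\equiv 1\bmod 2$) or to $\Z/2\Z$ (otherwise). These two facts are all I will need.

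For part~(1) I would argue purely group-theoretically: every proper subgroup of $S_3$ has order $1$, $2$, or $3$ and is therefore cyclic, while every proper subgroup of $\Z/2\Z$ is trivial. Hence any $G_{E,2}\subsetneq\mathcal{N}_{\delta,\phi}(2)$ is abelian, with no reference to the arithmetic of $\delta$ and $\phi$.

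For part~(2), when $G_{E,2}=\mathcal{N}_{\delta,\phi}(2)$ the question is literally whether the full normalizer is abelian, which is answered by Theorem~\ref{thm-fullnormalizerabelian}: abelianness holds if and only if $\phi\equiv 0\bmod 2$ or $(\delta,\phi)\equiv(0,1)\bmod 2$, and in that event $\mathcal{N}_{\delta,\phi}(2)\cong\Z/2\Z$ automatically. Thus the clause $G_{E,2}\cong\Z/2\Z$ in the statement comes for free, and the only remaining task is to translate the parity condition on $(\delta,\phi)$ into the stated conditions on $(\Delta_K,f)$ using the two-case definition of $\delta$ and $\phi$.

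This translation is the sole computational step, and I expect the only (minor) difficulty to be keeping the parities straight. When $\Delta_K f^2\equiv 0\bmod 4$ we have $\phi=0$, so the normalizer is abelian and this is exactly condition~(a). When $\Delta_K f^2\equiv 1\bmod 4$ we have $\phi=f$ and $\delta=\frac{\Delta_K-1}{4}f^2$; here $\Delta_K f^2$ odd forces $f$ odd and $\Delta_K\equiv 1\bmod 4$, so $\phi=f$ is odd and the criterion collapses to $\delta\equiv 0\bmod 2$. As $f^2$ is odd, $\delta\equiv\frac{\Delta_K-1}{4}\bmod 2$, which vanishes precisely when $\Delta_K\equiv 1\bmod 8$, yielding condition~(b). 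Finally I would note that (a) and (b) are mutually exclusive---(b) forces $\Delta_K f^2\equiv 1\bmod 4$---so their disjunction recovers the criterion of Theorem~\ref{thm-fullnormalizerabelian} exactly, completing the proof.
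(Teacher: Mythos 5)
Your proposal is correct and follows essentially the same route as the paper: part (1) via the observation that $\mathcal{N}_{\delta,\phi}(2)$ sits inside $\GL(2,\Z/2\Z)\cong S_3$ so every proper subgroup is abelian, and part (2) by quoting Theorem \ref{thm-fullnormalizerabelian} and translating the parity condition on $(\delta,\phi)$ into the stated conditions on $(\Delta_K,f)$. Your parity bookkeeping (in particular that $\Delta_Kf^2\equiv 1\bmod 4$ forces $f$ odd, reducing the criterion to $\frac{\Delta_K-1}{4}$ even, i.e.\ $\Delta_K\equiv 1\bmod 8$) matches the paper's computation exactly.
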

\begin{proof} Notice that $G_{E,2}\subseteq \mathcal{N}_{\delta,\phi}(2)$ which is in turn a subgroup of $\GL(2,\Z/2\Z)\cong S_3$. Thus, any proper subgroup of $\mathcal{N}_{\delta,\phi}(2)$ must be abelian. On the other hand, if $G_{E,2}\cong \mathcal{N}_{\delta,\phi}(2)$, then Theorem \ref{abelian_n=2} says that it is abelian if and only if $\phi\equiv 0 \bmod 2$ or $(\delta,\phi) \equiv (0,1) \bmod 2$. Recall that
    	\begin{itemize}
\item If $\Delta_Kf^2\equiv 0\bmod 4$, let $\delta=\Delta_K f^2/4$, and $\phi=0$.
		\item If $\Delta_Kf^2\equiv 1 \bmod 4$, let $\delta=\frac{(\Delta_K-1)}{4}f^2$, let $\phi=f$.
	\end{itemize}
 If $\phi\equiv 0 \bmod 2$, then $\Delta_Kf^2\equiv 0 \bmod 4$, and therefore $\phi=0$. If $\delta\equiv 0 \bmod 2$ and $\phi\equiv 1 \bmod 2$, then we must have $f\equiv 1 \bmod 2$ and $(\Delta_K-1)/4\equiv 0 \bmod 2$, and therefore $\Delta_K\equiv 1 \bmod 8$. This concludes the proof.
\end{proof}

The following proposition treats the case of the $2$-adic Galois representation attached to an elliptic curve $E/\Q(j_{K,f})$ with CM by $\mathcal{O}_{K,f}$, with $j_{K,f}\neq 0,1728$.

\begin{prop}\label{thm-2adictwist-intro}\label{prop-jnot01728andell2}
    Let $E/\Q(j_{K,f})$ be an elliptic curve with CM by an order $\Of$ in an imaginary quadratic field $K$, with $j_{K,f}\neq 0$ or $1728$. 
    Then, the image $G_{E,2^n}$ is non-abelian for $n\geq 2$. Finally, $G_{E,2}$ is abelian if and only if $G_{E,2}\cong \Z/2\Z$ and either
      \begin{enumerate}
            \item $\Delta_Kf^2\equiv 0 \bmod 4$, or 
            \item  $\Delta_K\equiv 1 \bmod 8$ and $f\equiv 1 \bmod 2$.
        \end{enumerate}
\end{prop}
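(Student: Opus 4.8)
The plan is to prove the two assertions of the proposition separately: first the non-abelianness of $G_{E,2^n}$ for all $n\geq 2$, and then the exact characterization of abelianness at level $2$. Throughout I would lean on the explicit classification of the $2$-adic image $\operatorname{im}(\rho_{E,2^\infty})$ for $j_{K,f}\neq 0,1728$ from \cite{lozano-galoiscm}, together with Corollary \ref{nonabelian}, Theorem \ref{abelian_n=2}, and Proposition \ref{prop-case_of_n_eq_2}. The crucial distinction between the two assertions is that Corollary \ref{nonabelian} only forces non-abelianness when the level exceeds $2$, so the cases $n\geq 2$ and $n=1$ genuinely behave differently.

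For the first assertion I would invoke the description of $\operatorname{im}(\rho_{E,2^\infty})$ in \cite{lozano-galoiscm}. After choosing a $\Z_2$-basis of $T_2(E)$ realizing the simpler $\phi=0$ form, the image is generated by a reflection $c_\varepsilon$, a scalar $\alpha\operatorname{Id}$ with $\alpha\in\{3,5\}$, and a Cartan element of shape $c_{\delta,0}(1,1)$ or $c_{\delta,0}(-1,-1)$; the essential feature is that this Cartan generator has off-diagonal parameter $b=\pm1$, a unit modulo every $2^n$. Reducing modulo $2^n$ with $n\geq 2$ (so that $2^n>2$), the image $G_{E,2^n}$ contains both $c_\varepsilon$ and a Cartan element $c_{\delta,0}(a,b)$ with $b\in(\Z/2^n\Z)^\times$; since $\phi=0$, Corollary \ref{nonabelian} applies verbatim and yields that $G_{E,2^n}$ is non-abelian. (Equivalently, one reduces the finitely many explicit $2$-adic images listed in \cite{lozano-galoiscm} modulo $2^n$ and checks directly that the reflection and the unit-$b$ Cartan generator fail to commute; in the $2$-inert case one uses $c_1$ and $c_{\delta,\phi}(0,1)$ in place of $c_\varepsilon$ and $c_{\delta,0}(\pm1,\pm1)$.)

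For the second assertion, the backward implication is immediate since $\Z/2\Z$ is abelian. The content lies in the forward implication, and the key step I would isolate is the claim that \emph{for $j_{K,f}\neq 0,1728$ one has $G_{E,2}=\mathcal{N}_{\delta,\phi}(2)$}, i.e.\ the mod-$2$ image is the full ``normalizer'' at level $2$. Granting this, Theorem \ref{abelian_n=2} (equivalently Proposition \ref{prop-case_of_n_eq_2}, case (2)) finishes the argument: $\mathcal{N}_{\delta,\phi}(2)$ is abelian if and only if it is isomorphic to $\Z/2\Z$, which happens exactly when $\phi\equiv 0\bmod 2$ or $(\delta,\phi)\equiv(0,1)\bmod 2$, and unwinding the definitions of $\delta$ and $\phi$ these are precisely the conditions $\Delta_Kf^2\equiv 0\bmod 4$ and ($\Delta_K\equiv 1\bmod 8$ with $f$ odd). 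In particular, once $G_{E,2}=\mathcal{N}_{\delta,\phi}(2)$ is known, any abelian $G_{E,2}$ is forced to be $\cong\Z/2\Z$, which excludes the \emph{a priori} abelian possibilities $G_{E,2}\cong\{0\}$ (when $\mathcal{N}_{\delta,\phi}(2)\cong\Z/2\Z$) and $G_{E,2}\cong\Z/3\Z=A_3$ (the index-$2$ subgroup when $\mathcal{N}_{\delta,\phi}(2)\cong S_3$), and gives exactly the stated equivalence.

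To establish the key step $G_{E,2}=\mathcal{N}_{\delta,\phi}(2)$, I would work at level $4$, where Theorem \ref{thm-cmrep-intro-alvaro}(1) applies directly. Since $j_{K,f}\neq 0,1728$ we have $\Of^\times=\{\pm1\}$, and because $-1\not\equiv 1\bmod 4\Of$ we get $\OO_{K,f,4}^\times=\{1\}$; hence $[\mathcal{N}_{\delta,\phi}(4):G_{E,4}]$ divides $|\Of^\times/\OO_{K,f,4}^\times|=2$. Following the proof of Theorem \ref{thm-cmrep-intro-alvaro} in \cite{lozano-galoiscm}, the possible shortfall coset is generated by the scalar image of $-1\in\Of^\times$, namely $-\operatorname{Id}$. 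Since the projection $\mathcal{N}_{\delta,\phi}(4)\to\mathcal{N}_{\delta,\phi}(2)$ is surjective (Remark \ref{rem-surjectivity}) and $-\operatorname{Id}\equiv\operatorname{Id}\bmod 2$, this coset collapses under reduction modulo $2$, so $G_{E,2}=\mathrm{red}_2(G_{E,4})=\mathrm{red}_2(\mathcal{N}_{\delta,\phi}(4))=\mathcal{N}_{\delta,\phi}(2)$. I expect this to be the main obstacle: one must verify that the index-$\leq 2$ discrepancy between the image and the full normalizer is accounted for entirely by the scalar $-\operatorname{Id}$ arising from $\Of^\times$, because it is precisely the triviality of this scalar modulo $2$ that promotes $G_{E,2}$ to the full normalizer and rules out the spurious abelian groups $\{0\}$ and $\Z/3\Z$.
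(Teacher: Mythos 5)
Your overall strategy matches the paper's: both arguments rest on the explicit classification of $G_{E,2^\infty}$ from \cite[Theorem 1.6]{lozano-galoiscm}, both use Corollary \ref{nonabelian} applied to a Cartan element with unit off-diagonal parameter $b$ to get non-abelianness for $n\geq 2$ (with Theorem \ref{abelian_n=2} covering the full-normalizer branch, where your ``$\phi=0$'' description of the generators does not apply), and both reduce the $n=1$ analysis to Theorem \ref{abelian_n=2}/Proposition \ref{prop-case_of_n_eq_2}. The one genuinely different ingredient is your uniform claim that $G_{E,2}=\mathcal{N}_{\delta,\phi}(2)$ for all $j_{K,f}\neq 0,1728$. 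The paper never isolates this; instead it splits on whether $G_{E,2^\infty}$ equals $\mathcal{N}_{\delta,\phi}(\Z_2)$, and in the proper-subgroup case it computes the mod-$2$ reduction explicitly to be $\left\langle\left(\begin{smallmatrix}1&1\\0&1\end{smallmatrix}\right)\right\rangle\cong\Z/2\Z$ while noting that the classification forces $\Delta_Kf^2\equiv 0\bmod 16$ or $\Delta_K\equiv 0\bmod 8$, so condition (1) holds automatically and the equivalence closes. Your claim is the right thing to isolate: without it (or the paper's explicit computation), the ``only if'' direction would be vulnerable to $G_{E,2}\cong\{0\}$ or $\Z/3\Z$, or to an order-$2$ subgroup of an $S_3$-normalizer with neither congruence condition holding.

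The gap is in your proof of that claim. Knowing from Theorem \ref{thm-cmrep-intro-alvaro}(1) that $[\mathcal{N}_{\delta,\phi}(4):G_{E,4}]$ divides $2$ does not identify which index-$2$ subgroup occurs, and the assertion that the shortfall coset is ``generated by the scalar $-\operatorname{Id}$'' is precisely what needs proof: a priori an index-$2$ subgroup of $\mathcal{N}_{\delta,\phi}(4)$ can reduce to a proper subgroup of $\mathcal{N}_{\delta,\phi}(2)$, and you yourself flag this as the unverified obstacle. The repair is much simpler than your level-$4$ detour: apply Theorem \ref{thm-cmrep-intro-alvaro}(1) directly at $N=2$. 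Since $j_{K,f}\neq 0,1728$ gives $\Of^\times=\{\pm 1\}$, and $-1\equiv 1\bmod 2\Of$, the group $\Of^\times/\mathcal{O}_{K,f,2}^\times$ is trivial, so the index of $G_{E,2}$ in $\mathcal{N}_{\delta,\phi}(2)$ divides $1$ and the claim follows in one line. With that substitution your argument is complete and arguably cleaner than the paper's case analysis at level $2$.
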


\begin{proof}
Let $E/\Q(j_{K,f})$ be an elliptic curve with CM by an order $\Of$ of conductor $f\geq 1$. Let $p=2$ and $j_{K,f}\neq 0$ or $1728$. The image of $\rho_{E,2^\infty}$ is given in this case by \cite[Theorem 1.6]{lozano-galoiscm}. There are two cases to consider.

First, if $G_{E,2^\infty}\cong \mathcal{N}_{\delta,\phi}(\Z_2)$, then Theorem \ref{abelian_n=2} shows that $G_{E,2^\infty}$ and $G_{E,2^n}$ are non-abelian for all $n\geq 2$. And Prop. \ref{prop-case_of_n_eq_2} shows that $G_{E,2}$ will be abelian if and only if $\Delta_Kf^2\equiv 0\bmod 4$ or $\Delta_kf^2\equiv 1 \bmod 4$ and $\Delta_K\equiv 1 \bmod 8$.

Second, if $G_{E,2^\infty}\subsetneq \mathcal{N}_{\delta,\phi}(\Z_2)$, then \cite[Thm. 1.6]{lozano-galoiscm} says that for $\alpha \in \{3,5\}$, the image $G_{E,2^\infty}$ is a conjugate of the topological closures of
\[
H_1 = \left\langle \left(\begin{array}{cc} \varepsilon & 0\\ 0 & -\varepsilon\\\end{array}\right),\left(\begin{array}{cc} \alpha & 0\\ 0  & \alpha\\\end{array}\right),\left(\begin{array}{cc} 1 & 1\\ \delta  & 1\\\end{array}\right)\right\rangle \text{ or } H_2 = \left\langle \left(\begin{array}{cc} \varepsilon & 0\\ 0 & -\varepsilon\\\end{array}\right),\left(\begin{array}{cc} \alpha & 0\\ 0  & \alpha\\\end{array}\right),\left(\begin{array}{cc} -1 & -1\\ -\delta  & -1\\\end{array}\right)\right\rangle,
\]
where $H_1,H_2 \subseteq \GL(2,\Z_2)$. Moreover, either $\Delta_k f^2 \equiv 0 \bmod 16$ or $\Delta_K\equiv 0 \bmod 8$ (so in both cases $\Delta_K f^2\equiv 0 \bmod 4$). In either case, $\phi=0$ and $\delta=\Delta_K f^2/4 \equiv 0 \bmod 2$. Thus, $H_i \bmod 2$ reduces to 
\[
\left\langle \left(\begin{array}{cc} 1 & 1\\ 0  & 1\\\end{array}\right)\right\rangle \subseteq \GL(2,\Z/2\Z) \]
which is abelian and isomorphic to $\Z/2\Z$.

Next we shall show that $G_{E,4}$ is non-abelian. First we will look at $G_{E,2^\infty} = H_1$. Consider $c_{\delta,0}(1,1) \in H_1$ mod $4$. Since $b\in (\Z/4\Z)^\times$, by Corollary \ref{nonabelian}, we have that the image $G_{E,4}$ is non-abelian. Therefore, the image $G_{E,2^n}$ is non-abelian for $n\geq 2$. 

Now we will look at $G_{E,2^\infty} = H_2$. Consider $c_{\delta,0}(-1,-1) \in H_2 \bmod 4$. Since $b\in (\Z/4\Z)^\times$, by Corollary \ref{nonabelian}, we have that the image $G_{E,4}$ is non-abelian. Therefore, the image $G_{E,2^n}$ is non-abelian for $n\geq 2$.
\end{proof}

The following proposition will treat the case of the $2$-adic Galois representation attached to an elliptic curve $E/\Q(j_{K,f})$ with CM by $\mathcal{O}_{K,f}$, with $j_{K,f}=1728$. But, we first need a lemma.

\begin{lemma}\label{abelian_conditions_j1728}
     Let $N\geq 2$ and let  $G\subseteq \mathcal{N}_{\delta,0}(N)$ be a subgroup (so here $\phi=0$). Let
     \[
    \gamma' \in \left\{ c_1=\left(\begin{array}{cc} 1 & 0\\ 0 & -1\\\end{array}\right),c_{-1}=\left(\begin{array}{cc} -1 & 0\\ 0 & 1\\\end{array}\right),c_1'=\left(\begin{array}{cc} 0 & 1\\ 1 & 0\\\end{array}\right),c_{-1}'=\left(\begin{array}{cc} 0 & -1\\ -1 & 0\\\end{array}\right) \right\} = \Gamma', 
    \]
     If $\gamma', c_{\delta,0}(a,b) \in G$, for some $a,b\in \Z/N\Z$, such that $\gamma' \cdot c_{\delta,0}(a,b) = c_{\delta,0}(a,b) \cdot \gamma'$, then $2b\equiv 0 \bmod N$ or $b(\delta-1)\equiv 0 \bmod N$.  
     \end{lemma}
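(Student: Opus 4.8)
The plan is to prove Lemma~\ref{abelian_conditions_j1728} by direct matrix computation, exactly as in the proof of Lemma~\ref{abelian_conditions}, but now handling each of the four elements of $\Gamma'$ in turn. For a fixed $\gamma' \in \Gamma'$ and a fixed Cartan element $c_{\delta,0}(a,b) = \left(\begin{smallmatrix} a & b \\ \delta b & a \end{smallmatrix}\right)$ (using $\phi = 0$), I would compute both products $\gamma' \cdot c_{\delta,0}(a,b)$ and $c_{\delta,0}(a,b) \cdot \gamma'$ and set them equal entry-by-entry modulo $N$. The goal in each case is to extract from the resulting congruences the disjunctive conclusion $2b \equiv 0 \bmod N$ or $b(\delta - 1) \equiv 0 \bmod N$.

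Concretely, I would organize the proof into two groups. First, for the diagonal elements $c_1 = \left(\begin{smallmatrix} 1 & 0 \\ 0 & -1 \end{smallmatrix}\right)$ and $c_{-1} = \left(\begin{smallmatrix} -1 & 0 \\ 0 & 1 \end{smallmatrix}\right)$: conjugating $c_{\delta,0}(a,b)$ by such a diagonal matrix negates the off-diagonal entries, so the commutativity condition forces $b \equiv -b$ and $\delta b \equiv -\delta b \bmod N$, i.e. $2b \equiv 0 \bmod N$. Second, for the anti-diagonal elements $c_1' = \left(\begin{smallmatrix} 0 & 1 \\ 1 & 0 \end{smallmatrix}\right)$ and $c_{-1}'$: here conjugation swaps the two diagonal entries (which are both $a$, hence harmless) and rescales/swaps the off-diagonal pair $b$ and $\delta b$. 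Carrying out the multiplication for $c_1'$ gives, comparing entries, the congruence $\delta b \equiv b \bmod N$, that is $b(\delta - 1) \equiv 0 \bmod N$. The element $c_{-1}' = -c_1'$ yields the same conclusion since the sign cancels in the two products. Thus in every case one of the two stated congruences holds, which is precisely the disjunction claimed.

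I would present this cleanly by treating the two diagonal cases together (yielding $2b \equiv 0$) and the two anti-diagonal cases together (yielding $b(\delta-1) \equiv 0$), using a short display for one representative computation in each group and remarking that the sign-flipped partner gives the identical system. I expect no genuine obstacle here: this is a routine commutator calculation analogous to Lemma~\ref{abelian_conditions}, and the only mild subtlety is bookkeeping the four elements efficiently so as to avoid repeating essentially the same computation four times. The main thing to be careful about is that the conclusion is a \emph{disjunction}: the diagonal elements give the first alternative and the anti-diagonal elements give the second, and neither class forces both, so the statement must be read as "depending on which $\gamma'$ was used, one of the two congruences holds." I would make sure the phrasing of the conclusion reflects that the two alternatives arise from different members of $\Gamma'$ rather than claiming both simultaneously.
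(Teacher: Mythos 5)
Your proposal is correct and follows essentially the same route as the paper: the diagonal elements $c_{\pm 1}$ are handled exactly as in Lemma \ref{abelian_conditions} (yielding $2b\equiv 0 \bmod N$), and the anti-diagonal elements $c_{\pm 1}'$ are handled by the direct product comparison giving $b(\delta-1)\equiv 0 \bmod N$. Your reading of the conclusion as a disjunction depending on which $\gamma'$ occurs is also exactly how the paper's proof is structured.
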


\begin{proof}
    Suppose that $\gamma'$ and $c_{\delta,0}(a,b)\in G$, where $a,b\in \mathbb{Z}/N\mathbb{Z}$ and $a^2+ab\phi-\delta b^2 \in (\mathbb{Z}/N\mathbb{Z})^\times$, and suppose the two matrices commute. If $\gamma'=c_\varepsilon$ for $\varepsilon\in \{ \pm 1\}$ then Lemma \ref{abelian_conditions} shows that $2b\equiv 0 \bmod N$. If $\gamma'=c_\varepsilon'$, then
     \begin{align*}
        \left(\begin{array}{cc} a & b \\ \delta b & a\\\end{array}\right)\left(\begin{array}{cc} 0 & \varepsilon \\ \varepsilon & 0\\\end{array}\right) &\equiv \left(\begin{array}{cc} 0 & \varepsilon \\ \varepsilon & 0\\\end{array}\right)\left(\begin{array}{cc} a & b \\ \delta b & a\\\end{array}\right) \bmod N, \text{ and therefore} \\
        \left(\begin{array}{cc} \varepsilon b & \varepsilon a  \\ \varepsilon a & \varepsilon\delta b\\\end{array}\right) &\equiv \left(\begin{array}{cc} \varepsilon\delta b & \varepsilon a\\ \varepsilon a & \varepsilon b\\\end{array}\right) \bmod N,
    \end{align*}
    In order for the last equivalence to hold, we must have that $b(\delta-1)\equiv 0 \bmod N$. 
    Thus, either $2b\equiv 0 \bmod N$ or $b(\delta-1) \equiv 0 \bmod N$, as desired.
\end{proof}

The following proposition uses notation from \cite[Theorem 1.7]{lozano-galoiscm} for the classification of images. The images themselves will be spelled out in the proof.

\begin{prop}\label{thm-j1728-intro}\label{prop-j1728andell2}
    Let $E/\Q$ be an elliptic curve with $j(E)=1728$, and let $c\in \GQ$ be a complex conjugation, and $\gamma=\rho_{E,2^\infty}(c)$. Let $G_{E,2^\infty}$ be the image of $\rho_{E,2^\infty}$ and let $G_{E,K,2^\infty}=\rho_{E,2^\infty}(G_{\Q(i)})$.
    \begin{enumerate}
        \item[(i)] If $[\cC_{-1,0}(2^\infty):G_{E,K,2^\infty}]=1$, then the image $G_{E,2^n}$ is non-abelian if and only if $n\geq 2$. Here $G_{E,2}\cong \Z/2\Z$.

        \item[(ii)] If $[\cC_{-1,0}(2^\infty):G_{E,K,2^\infty}]=2$, then either $G_{E,2^\infty} = \langle \gamma, G_{2,a}\rangle$ or $\langle \gamma, G_{2,b}\rangle$:
        \begin{itemize}
            \item If $G_{E,2^\infty} = \langle \gamma, G_{2,a}\rangle$, then the image $G_{E,2^n}$ is non-abelian if and only if $n\geq 3$. Here $G_{E,2}\cong \{\operatorname{Id}\}$ or $\Z/2\Z$, and $G_{E,4}\cong (\Z/2\Z)^3$.

            \item If $G_{E,2^\infty} = \langle \gamma, G_{2,b}\rangle$, then the image $G_{E,2^n}$ is non-abelian if and only if $n\geq 2$. Here $G_{E,2}\cong \Z/2\Z$.
        \end{itemize}

        \item[(iii)] If $[\cC_{-1,0}(2^\infty):G_{E,K,2^\infty}]=4$, then either $G_{E,2^\infty} = \langle \gamma, G_{4,a}\rangle$, $\langle \gamma, G_{4,b}\rangle$, $\langle \gamma, G_{4,c}\rangle$, or $\langle \gamma, G_{4,d}\rangle$:
        \begin{itemize}
            \item If $G_{E,2^\infty} = \langle \gamma, G_{4,a}\rangle$ or $\langle \gamma, G_{4,b}\rangle$, then $G_{E,2^n}$ is non-abelian if and only if $n\geq 3$. Here $G_{E,2} \cong \{\operatorname{Id}\}$ or $\Z/2\Z$, and $G_{E,4}\cong (\Z/2\Z)^2$

            \item If $G_{E,2^\infty} = \langle \gamma, G_{4,c}\rangle$ or $\langle \gamma, G_{4,d}\rangle$, then $G_{E,2^n}$ is non-abelian if and only if $n\geq 2$. Here $G_{E,2} \cong \Z/2\Z$.
        \end{itemize}
    \end{enumerate}
\end{prop}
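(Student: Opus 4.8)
The plan is to exploit that for $j(E)=1728$ we have $K=\Q(i)$, $\Delta_K=-4$, $f=1$, so $\delta=\Delta_Kf^2/4=-1$ and $\phi=0$; thus $G_{E,K,2^\infty}$ lies in the \emph{abelian} Cartan group $\cC_{-1,0}(2^\infty)$ (Lemma \ref{lem-cartanisabelian}), and the only possible source of noncommutativity is the interaction of the extra generator $\gamma'\in\Gamma'$ with the Cartan part. The first step is a single clean computation: for each of the four matrices $\gamma'\in\Gamma'$ one checks directly that $\gamma'\,c_{-1,0}(a,b)\,{\gamma'}^{-1}=c_{-1,0}(a,-b)$ (i.e.\ conjugation acts as complex conjugation $a+bi\mapsto a-bi$ on $\Z_2[i]$). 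Since $\delta-1=-2$ here, this also explains why the two alternatives $2b\equiv 0$ and $b(\delta-1)\equiv 0$ in Lemma \ref{abelian_conditions_j1728} collapse into the single sharp criterion: $\gamma'$ commutes with $c_{-1,0}(a,b)$ in $G_{E,2^n}$ if and only if $2b\equiv 0\bmod 2^n$.

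Next I would reduce abelianness of $G_{E,2^n}$ to a condition on one Cartan generator. Writing $G_{E,2^\infty}=\langle\gamma',G_{E,K,2^\infty}\rangle$ with $G_{E,K,2^\infty}$ abelian, and noting that in each of $G_{2,a},G_{2,b},G_{4,a},G_{4,b},G_{4,c},G_{4,d}$ the group $G_{E,K,2^\infty}$ is generated by scalar matrices (which are central) together with a \emph{single} nonscalar Cartan matrix $c_{-1,0}(a,b)$, it follows that $G_{E,2^n}$ is abelian if and only if $\gamma'$ commutes with the reduction mod $2^n$ of that nonscalar generator, i.e.\ if and only if $2b\equiv 0\bmod 2^n$. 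Reading off $b$ then does the bulk of the work: $G_{2,b}$ ($b=1$), $G_{4,c}$ ($b=-1$), $G_{4,d}$ ($b=1$) have $2b=\pm2$, so noncommutativity occurs exactly for $n\ge 2$; while $G_{2,a}$ ($b=2$), $G_{4,a}$ ($b=2$), $G_{4,b}$ ($b=-2$) have $2b=\pm4$, so noncommutativity occurs exactly for $n\ge 3$. In case (i), where $G_{E,K,2^\infty}$ is the full Cartan, the element $c_{-1,0}(0,1)$ has $b=1$, which forces noncommutativity for all $n\ge 2$, while mod $2$ everything is abelian.

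The remaining task is to pin down the isomorphism types at the abelian levels, which I would do by explicitly reducing the generators modulo $2$ and modulo $4$. Modulo $2$ every scalar and every even-$b$ Cartan generator reduces to $\mathrm{Id}$, whereas an odd-$b$ Cartan generator reduces to $c_1'$; since $\gamma'$ itself reduces into $\{\mathrm{Id},c_1'\}$, one gets $G_{E,2}\cong\Z/2\Z$ whenever an odd-$b$ generator is present (case (i), $G_{2,b}$, $G_{4,c}$, $G_{4,d}$) and $G_{E,2}\cong\{\mathrm{Id}\}$ or $\Z/2\Z$ otherwise ($G_{2,a}$, $G_{4,a}$, $G_{4,b}$). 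Modulo $4$ the even-$b$ groups reduce to $\langle 3\cdot\mathrm{Id},\,c_{-1,0}(a,b)\rangle\cong(\Z/2\Z)^2$ for $G_{2,a}$ and to $\langle c_{-1,0}(a,b)\rangle\cong\Z/2\Z$ for $G_{4,a},G_{4,b}$ (since $5\equiv 1\bmod 4$); adjoining $\gamma'$, which has order $2$, commutes with everything (as $2b\equiv 0\bmod 4$), and is not already contained in the reduction, doubles the order and keeps every element $2$-torsion, yielding $G_{E,4}\cong(\Z/2\Z)^3$ for $G_{2,a}$ and $G_{E,4}\cong(\Z/2\Z)^2$ for $G_{4,a},G_{4,b}$.

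The only point demanding genuine care — the ``main obstacle'' — is certifying these isomorphism types exactly: one must verify that the reduction of $\gamma'$ does not already lie in the reduction of $G_{E,K,2^n}$, so that adjoining it truly doubles the group. I would confirm this by listing the finitely many matrices in each reduced group and checking each of the four candidates $\gamma'\bmod 4$ against them. Everything else is a bounded, mechanical verification, unified by the single conjugation identity $\gamma'\,c_{-1,0}(a,b)\,{\gamma'}^{-1}=c_{-1,0}(a,-b)$ established at the outset.
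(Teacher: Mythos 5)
Your proof is correct, and it reaches the same conclusions as the paper's, but by a noticeably cleaner route at the one genuinely delicate step. The paper handles the cases $G_{2,a}$, $G_{4,a}$, $G_{4,b}$ (where $G_{E,4}$ is abelian) by explicitly listing all sixteen lifts of the four matrices in $\Gamma'$ modulo $8$ and then invoking a finite Magma computation to certify that $G_{E,8}$ is non-abelian; you replace this entirely with the identity $\gamma'\,c_{-1,0}(a,b)\,\gamma'^{-1}=c_{-1,0}(a,-b)$, which converts every commutation question into the single congruence $2b\equiv 0\bmod 2^n$ and lets you read off the threshold $n\geq 3$ versus $n\geq 2$ directly from $b=\pm 2$ versus $b=\pm 1$. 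This is a genuine improvement: it is computation-free, it unifies cases (i)--(iii) under one criterion, and it explains structurally why the two alternatives in Lemma \ref{abelian_conditions_j1728} collapse when $\delta=-1$. The one point you should make explicit is the reason the paper bothers with the lifts at all: the classification being cited only pins down $\gamma=\rho_{E,2^\infty}(c)$ modulo $4$, so modulo $8$ the extra generator need not literally equal one of the four matrices in $\Gamma'$. Your argument survives this because every element of the nontrivial coset $\mathcal{N}_{-1,0}(2^\infty)\setminus \cC_{-1,0}(2^\infty)$ is of the form $\gamma' h$ with $h$ in the (abelian) Cartan, hence conjugates the Cartan in exactly the same way ($b\mapsto -b$); adding that one sentence makes the level-$8$ non-abelianness claim airtight without any enumeration. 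The mod-$2$ and mod-$4$ isomorphism-type computations in your last two paragraphs agree with the paper's and are handled in essentially the same way.
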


\begin{proof}
    Let $E/\Q$ be an elliptic curve with $j(E)=1728$. Thus, $E$ has CM by $\Of=\Z[i]$, where $K=\Q(i)$. Since $\Delta_K=-4$ and $f=1$, we set $\delta =-1$ and $\phi=0$. Let $c\in \GQ$ be a complex conjugation, and $\gamma=\rho_{E,2^\infty}(c)$. The image of $\rho_{E,2^\infty}$ is given in this case by \cite[Theorem 1.7]{lozano-galoiscm} and tells us that the image $G_{E,2^\infty} = \langle c_{\gamma'}, G_{E,K,2^\infty}\rangle$ where 
    \[
    \gamma' \in \left\{ c_1=\left(\begin{array}{cc} 1 & 0\\ 0 & -1\\\end{array}\right),c_{-1}=\left(\begin{array}{cc} -1 & 0\\ 0 & 1\\\end{array}\right),c_1'=\left(\begin{array}{cc} 0 & 1\\ 1 & 0\\\end{array}\right),c_{-1}'=\left(\begin{array}{cc} 0 & -1\\ -1 & 0\\\end{array}\right) \right\} = \Gamma', 
    \]
    such that  
	$\gamma \equiv \gamma' \bmod 4,$ and $G_{E,K,2^\infty}=\rho_{E,2^\infty}(G_{\Q(i)})$.
 
 There are three cases to consider according to the index of $G_{E,K,2^\infty}$ in $\cC_{-1,0}(2^\infty)$, which correspond accordingly to the cases of the statement of \cite[Theorem 1.7]{lozano-galoiscm}.  


     \begin{enumerate}
         \item[(i)] If $[\cC_{-1,0}(2^\infty):G_{E,K,2^\infty}]=1$, then $G_{E,2^\infty}= \mathcal{N}_{-1,0}(2^\infty)= \langle \gamma', G_1\rangle$, where
         \[
         G_1= \left\{\left(\begin{array}{cc} a & b\\ -b & a\\\end{array}\right)\in \GL(2,\Z_2) : a^2+b^2\not\equiv 0 \bmod 2 \right\}.
         \]
         By Theorem \ref{abelian_n=2} and since $\phi=0$, we have that $G_{E,2}\cong \Z/2\Z$ is abelian and the image $G_{E,2^n}$ is non-abelian for $n\geq 2$.
    \end{enumerate}

    For parts (ii) and (iii), there will be cases where we need to check if the image $G_{E,8}$ is abelian. To check if the image $G_{E,8}$ is abelian, we need to first lift the four matrices in $\Gamma'$ mod 8. Let $\pi_8\colon \mathcal{N}_{-1,0}(8) \to \mathcal{N}_{-1,0}(4)$. We want to consider the images $G_{E,8} = \langle \gamma'', G_{E,K,8}\rangle$, where $\gamma'' \in \pi_8^{-1}(\Gamma')=\Gamma''$. Let $\Gamma'' =\pi_8^{-1}(c_1)\cup \pi_8^{-1}(c_{-1})\cup \pi_8^{-1}(c_1')\cup \pi_8^{-1}(c_{-1}')$. With a little computation, one can show that
    \begin{align*}
        \pi_8^{-1}(c_1) &= \left\{ \left(\begin{array}{cc} 5 & 4\\ 4 & 3\\\end{array}\right), \left(\begin{array}{cc} 1 & 4\\ 4 & 7\\\end{array}\right), \left(\begin{array}{cc} 5 & 0\\ 0 & 3\\\end{array}\right), \left(\begin{array}{cc} 1 & 0\\ 0 & 7\\\end{array}\right) \right\}, \\
        \pi_8^{-1}(c_{-1}) &= \left\{ \left(\begin{array}{cc} 7 & 4\\ 4 & 1\\\end{array}\right), \left(\begin{array}{cc} 7 & 0\\ 0 & 1\\\end{array}\right), \left(\begin{array}{cc} 3 & 0\\ 0 & 5\\\end{array}\right), \left(\begin{array}{cc} 3 & 4\\ 4 & 5\\\end{array}\right) \right\}, \\
        \pi_8^{-1}(c_1') &= \left\{ \left(\begin{array}{cc} 4 & 1\\ 1 & 4\\\end{array}\right), \left(\begin{array}{cc} 0 & 1\\ 1 & 0\\\end{array}\right), \left(\begin{array}{cc} 4 & 5\\ 5 & 4\\\end{array}\right), \left(\begin{array}{cc} 0 & 5\\ 5 & 0\\\end{array}\right) \right\},\\
        \pi_8^{-1}(c_{-1}') &= \left\{ \left(\begin{array}{cc} 4 & 7\\ 7 & 4\\\end{array}\right), \left(\begin{array}{cc} 0 & 7\\ 7 & 0\\\end{array}\right), \left(\begin{array}{cc} 0 & 3\\ 3 & 0\\\end{array}\right), \left(\begin{array}{cc} 4 & 3\\ 3 & 4\\\end{array}\right) \right\}.
    \end{align*}

    \begin{enumerate}
         \item[(ii)] If $[\cC_{-1,0}(2^\infty):G_{E,K,2^\infty}]=2$, then either $G_{E,2^\infty} = \langle \gamma', G_{2,a}\rangle$ or $G_{E,2^\infty} = \langle \gamma', G_{2,b}\rangle$, where 
         \[
         G_{2,a}=\left\langle -\operatorname{Id}, 3\cdot \operatorname{Id},\left(\begin{array}{cc} 1 & 2\\ -2 & 1\\\end{array}\right) \right\rangle \ \text{ and } \ G_{2,b}=\left\langle -\operatorname{Id}, 3\cdot \operatorname{Id},\left(\begin{array}{cc} 2 & 1\\ -1 & 2\\\end{array}\right) \right\rangle.
         \]
         \begin{itemize}
             \item Consider the case $G_{E,2^\infty} = \langle \gamma', G_{2,a}\rangle$. Observe that $G_{2,a}$ reduces to the identity modulo $2$, thus  $G_{E,2} = \langle \gamma', G_{2,a}\rangle\cong \{\operatorname{Id}\}$ or $\Z/2\Z$ (the isomorphism type depends on the choice of $\gamma'$) and it is abelian. For $N=4$, we have that $G_{2,a}=\langle -\operatorname{Id},c_{-1,0}(1,2)\rangle$, and all of these matrices commute with any of the choices for $\gamma'\in \Gamma'$. Thus, we have that the image $G_{E,4}$ is abelian. Moreover, one can easily check that $G_{E,4}\cong (\Z/2\Z)^3$ in all cases.
             
             Finally, we show that $G_{E,8}$ is non-abelian. Let $\gamma'' \in \Gamma''$, so $\gamma'' \equiv \gamma' \bmod 4$. Consider the image $G_{E,8}=\langle \gamma'', G_{2,a}\rangle$. A finite Magma 
 (\cite{magma}) computation shows that the image $G_{E,8}$ is non-abelian. Therefore, the image $G_{E,2^n}$ is non-abelian for $n\geq 3$.

             \item Consider the case $G_{E,2^\infty} = \langle \gamma', G_{2,b}\rangle$. Here $G_{E,2}\equiv \{\operatorname{Id},c_{-1,0}(0,1)\}\cong \Z/2\Z$ is abelian. If we consider $c_{-1,0}(2,1)\bmod 4$, then $b\equiv 1 \bmod 4$. Since $b\in (\Z/4\Z)^\times$, by Lemma \ref{abelian_conditions_j1728}, the image $G_{E,4}$ is non-abelian. Therefore, the image $G_{E,2^n}$ is non-abelian for $n\geq 2$.
         \end{itemize}

         \item[(iii)] If $[\cC_{-1,0}(2^\infty):G_{E,K,2^\infty}]=4$, then either $G_{E,2^\infty} = \langle \gamma', G_{4,a}\rangle$, $G_{E,2^\infty} = \langle \gamma', G_{4,b}\rangle$, $G_{E,2^\infty} = \langle \gamma', G_{4,c}\rangle$, or $G_{E,2^\infty} = \langle \gamma', G_{4,d}\rangle$, where
         \[
         G_{4,a}=\left\langle 5\cdot \operatorname{Id},\left(\begin{array}{cc} 1 & 2\\ -2 & 1\\\end{array}\right) \right\rangle, \  G_{4,b}=\left\langle 5\cdot \operatorname{Id},\left(\begin{array}{cc} -1 & -2\\ 2 & -1\\\end{array}\right) \right\rangle,
         \]
	   \[
         G_{4,c}= \left\langle -3\cdot \operatorname{Id},\left(\begin{array}{cc} 2 & -1\\ 1 & 2\\\end{array}\right) \right\rangle, \ \text{ and } \ G_{4,d}=\left\langle -3\cdot \operatorname{Id},\left(\begin{array}{cc} -2 & 1\\ -1 & -2\\\end{array}\right) \right\rangle.
         \]
         \begin{itemize}
             \item Consider the case $G_{E,2^\infty} = \langle \gamma', G_{4,a}\rangle$. Observe that $G_{4,a}$ reduces to the identity modulo $2$, thus  $G_{E,2} = \langle \gamma', G_{2,a}\rangle\cong \{\operatorname{Id}\}$ or $\Z/2\Z$ (the isomorphism type depends on the choice of $\gamma'$), and it is abelian. For $N=4$, we have that $G_{4,a}=\langle c_{-1,0}(1,2)\rangle$, and this matrix commutes with any of the choices for $\gamma'\in \Gamma'$. Thus, we have that the image $G_{E,4}$ is abelian. Moreover, one can easily check that $G_{E,4}\cong (\Z/2\Z)^2$ in all cases.

             Finally, we show that $G_{E,8}$ is non-abelian. Let $\gamma'' \in \Gamma''$, so $\gamma'' \equiv \gamma' \bmod 4$. Consider the image $G_{E,8}=\langle \gamma'', G_{2,a}\rangle$. A finite Magma computation shows that the image $G_{E,8}$ is non-abelian. Therefore, the image $G_{E,2^n}$ is non-abelian for $n\geq 3$. 

             \item Consider the case $G_{E,2^\infty} = \langle \gamma', G_{4,b}\rangle$. The same arguments as in the previous bullet point shows that $G_{E,2} \cong \{\operatorname{Id}\}$ or $\Z/2\Z$, and $G_{E,4}\cong (\Z/2\Z)^2$, but $G_{E,2^n}$ is non-abelian for $n\geq 3$. 

             \item Consider the case $G_{E,2^\infty} = \langle \gamma', G_{4,c}\rangle$. Here $G_{E,2}\equiv \{\operatorname{Id},c_{-1,0}(0,1)\}\cong \Z/2\Z$ is abelian.
             
             However, if we consider $c_{-1,0}(2,-1) \bmod 4$, then $b\equiv -1 \bmod 4$. Since $b\in (\Z/4\Z)^\times$, by Lemma \ref{abelian_conditions_j1728}, the image $G_{E,4}$ is non-abelian. Therefore, the image $G_{E,2^n}$ is non-abelian for $n\geq 2$.

             \item Consider the case $G_{E,2^\infty} = \langle \gamma', G_{4,d}\rangle$. The same arguments as in the previous bullet point shows that $G_{E,2} \cong \Z/2\Z$, but $G_{E,2^n}$ is non-abelian for $n\geq 2$.
         \end{itemize}
     \end{enumerate}
This concludes the proof.
\end{proof}

The following lemma determines the models of elliptic curves with $j_{K,f}=1728$ that have abelian $4$-division field. We first cite a result that we will use in the proof.

\begin{cor}[\cite{gonzalez-jimenez-lozano-robledo}, Cor. 2.5]\label{cor-E4}
 Let $E/F$ be an elliptic curve defined over a number field $F$, given by 
$$E:y^2 = (x-\alpha)(x-\beta)(x-\gamma)$$
with $\alpha,\beta,\gamma\in \overline{F}$. Then,
$$F(E[4])=F\left(\sqrt{\pm(\alpha-\beta)},\sqrt{\pm(\alpha-\gamma)},\sqrt{\pm(\beta-\gamma)}\right)=F\left(\sqrt{-1},\sqrt{\alpha-\beta},\sqrt{\alpha-\gamma},\sqrt{\beta-\gamma}\right).$$
\end{cor}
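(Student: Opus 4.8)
The plan is to reduce the computation of $F(E[4])$ to an explicit halving of $2$-torsion points. First I would record two preliminary reductions. The equality of the two displayed fields on the right is immediate: since $\sqrt{-(\alpha-\beta)}=\sqrt{-1}\,\sqrt{\alpha-\beta}$ (and likewise for the other differences), adjoining all six roots $\sqrt{\pm(\alpha-\beta)},\sqrt{\pm(\alpha-\gamma)},\sqrt{\pm(\beta-\gamma)}$ is the same as adjoining $\sqrt{-1}$ together with the three ``positive'' roots. Moreover this field contains $F(E[2])=F(\alpha,\beta,\gamma)$, because $\alpha+\beta+\gamma\in F$ (it is a coefficient of the cubic) while $\alpha-\beta$ and $\alpha-\gamma$ are squares of adjoined elements, so each of $\alpha,\beta,\gamma$ is recovered by solving a linear system. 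Thus it suffices to prove $F(E[4])=F(\sqrt{-1},\sqrt{\alpha-\beta},\sqrt{\alpha-\gamma},\sqrt{\beta-\gamma})$.

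The structural input is that $E[4]$ is generated as a group by any two points $Q_1,Q_2$ with $2Q_1=T_\alpha:=(\alpha,0)$ and $2Q_2=T_\beta:=(\beta,0)$. Indeed, doubling a relation $aQ_1+bQ_2=O$ gives $aT_\alpha+bT_\beta=O$, which forces $a,b$ even since $T_\alpha,T_\beta$ are independent of order $2$; repeating forces $a,b\equiv 0\bmod 4$, so $Q_1,Q_2$ are independent of order $4$ and $\langle Q_1,Q_2\rangle=E[4]$. Since the coordinates of any sum of points are $F$-rational functions of the coordinates of the summands, $F(E[4])=F\big(x(Q_1),y(Q_1),x(Q_2),y(Q_2)\big)$.

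The core step is the explicit halving of $T_\alpha$. A point $Q=(x_0,y_0)$ satisfies $2Q=T_\alpha$ exactly when the tangent line $\ell$ to $E$ at $Q$ meets $E$ again at $T_\alpha$ (using $-T_\alpha=T_\alpha$). Writing this as $f(x)-\ell(x)^2=(x-x_0)^2(x-\alpha)$ with $f(x)=(x-\alpha)(x-\beta)(x-\gamma)$ and comparing coefficients, I would obtain the quadratic $x_0^2-2\alpha x_0+(\beta+\gamma)\alpha-\beta\gamma=0$, i.e.
$$x_0=\alpha\pm\sqrt{(\alpha-\beta)(\alpha-\gamma)}.$$
Substituting into $y_0^2=f(x_0)$ and writing $u=\sqrt{\alpha-\beta}$, $v=\sqrt{\alpha-\gamma}$, a direct factorization gives $y_0=\pm uv(u\pm v)$, the inner sign matching that in $x_0$. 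Hence the halves of $T_\alpha$ have coordinates in $F(u,v)=F(\sqrt{\alpha-\beta},\sqrt{\alpha-\gamma})$, and symmetrically the halves of $T_\beta$ have coordinates in $F(\sqrt{\beta-\alpha},\sqrt{\beta-\gamma})\subseteq F(\sqrt{-1},\sqrt{\alpha-\beta},\sqrt{\beta-\gamma})$.

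Both inclusions then follow. For ``$\subseteq$'', the previous paragraph places the coordinates of $Q_1,Q_2$ inside $F(\sqrt{-1},\sqrt{\alpha-\beta},\sqrt{\alpha-\gamma},\sqrt{\beta-\gamma})$, whence $F(E[4])$ lies in it. For ``$\supseteq$'', the Weil pairing gives $\sqrt{-1}=\zeta_4\in F(E[4])$; and since $x(Q_1)-\alpha=\pm uv\in F(E[4])$ is nonzero, the two halves $Q_1=(\alpha+uv,\,uv(u+v))$ and $Q_1'=(\alpha-uv,\,uv(u-v))$ of $T_\alpha$ (both in $E[4]$) yield $y(Q_1)/(uv)=u+v$ and $y(Q_1')/(uv)=u-v$, so $u=\sqrt{\alpha-\beta}$ and $v=\sqrt{\alpha-\gamma}$ lie in $F(E[4])$; repeating with $T_\beta$ gives $\sqrt{\beta-\gamma}\in F(E[4])$. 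I expect the main obstacle to be the halving computation of the third paragraph, specifically the sign bookkeeping in factoring $y_0^2=f(x_0)$ and the minor degenerate case $\alpha=0$ in the coefficient comparison, which I would sidestep by translating $x\mapsto x+t$ (this changes neither the root differences nor $F(E[4])$) or by verifying directly via the duplication formula that the claimed $x_0$ halves $T_\alpha$.
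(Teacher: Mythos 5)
Your proof is correct. The paper offers no proof of this corollary at all---it is imported by citation from \cite{gonzalez-jimenez-lozano-robledo}---and your argument (generate $E[4]$ by halves $Q_1,Q_2$ of $(\alpha,0)$ and $(\beta,0)$, compute $x(Q_1)=\alpha\pm\sqrt{(\alpha-\beta)(\alpha-\gamma)}$ and $y(Q_1)=\pm uv(u\pm v)$ explicitly, then read off both inclusions, the sign ambiguities being harmless since $\pm(u+v)$ and $\pm(u-v)$ together with $-1\in F$ still recover $u$ and $v$) is precisely the standard halving computation underlying Corollary 2.5 of that reference, so there is nothing to add.
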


Now we are ready for the lemma, which was already proved as part of the proof of \cite[Theorem 4.3]{gonzalez-jimenez-lozano-robledo}, but we include it here for completeness. 

\begin{lemma}\label{lem-4abelianj1728}
 Let $E/\Q$ be an elliptic curve with $j(E)=j_{K,f}=1728$ given by $y^2=x^3+sx$, such that $s\in\Z$ is a $4$-th-power-free integer. Then, $\Q(E[4])/\Q$ is abelian if and only if $s=\pm t^2$ for some other square-free integer $t$. Moreover,
 \begin{enumerate}
 \item $\Q(E[4])=\Q(\zeta_8)$ and $\Gal(\Q(E[4])/\Q)\cong (\Z/2\Z)^2$ if and only if $s\in \{\pm 1,\pm 4\}$, and
 \item $\Q(E[4])=\Q(\zeta_8,\sqrt{t})$ and $\Gal(\Q(E[4])/\Q)\cong (\Z/2\Z)^3$ if and only if $s=\pm t^2$ for some square-free integer $t\not\in \{\pm 1,\pm 2\}$.
 \end{enumerate}
\end{lemma}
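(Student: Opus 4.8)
The plan is to first compute $\Q(E[4])$ explicitly and then decide abelianness by pure field theory. Write $E:y^2=x^3+sx=x(x^2+s)$, so the cubic has roots $\alpha=0$, $\beta=\sqrt{-s}$, $\gamma=-\sqrt{-s}$ (note $s\neq0$ since $E$ is elliptic). Apply Corollary~\ref{cor-E4}: since $\alpha-\beta=-\sqrt{-s}$, $\alpha-\gamma=\sqrt{-s}$ and $\beta-\gamma=2\sqrt{-s}$, put $\theta=\sqrt[4]{-s}$ so that $\theta^2=\sqrt{-s}$. Then a square root of $-\sqrt{-s}$ is $i\theta$ and a square root of $2\sqrt{-s}$ is $\sqrt2\,\theta$, whence
\[
\Q(E[4])=\Q(i,\theta,\sqrt2\,\theta)=\Q(i,\sqrt2,\theta)=\Q(\zeta_8,\sqrt[4]{-s}),
\]
using $\sqrt2=(\sqrt2\,\theta)/\theta$ and $\Q(i,\sqrt2)=\Q(\zeta_8)$. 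This reduces the whole problem to deciding when the single Kummer-type extension $L:=\Q(\zeta_8,\sqrt[4]{-s})$ is abelian over $\Q$.

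For the forward implication, assume $s=\pm t^2$ with $t$ square-free. If $s=t^2$ then $(\zeta_8\sqrt t)^4=\zeta_8^4 t^2=-t^2$, so $\sqrt[4]{-s}=\zeta_8\sqrt t$; if $s=-t^2$ then $\sqrt[4]{-s}=\sqrt t$. Either way $L=\Q(\zeta_8,\sqrt t)$, a compositum of the abelian extensions $\Q(\zeta_8)/\Q$ and $\Q(\sqrt t)/\Q$, hence abelian. To separate the two isomorphism types I would use that the quadratic subfields of $\Q(\zeta_8)$ are exactly $\Q(i)$, $\Q(\sqrt2)$ and $\Q(\sqrt{-2})$: thus $\sqrt t\in\Q(\zeta_8)$ iff the square-free $t\in\{\pm1,\pm2\}$, i.e. iff $s\in\{\pm1,\pm4\}$, giving $L=\Q(\zeta_8)$ with $\Gal(L/\Q)\cong(\Z/2\Z)^2$ (part (1)); otherwise $[L:\Q]=8$ and $\Gal(L/\Q)\cong(\Z/2\Z)^3$ (part (2)).

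For the converse I would analyze $d:=[L:\Q(\zeta_8)]\in\{1,2,4\}$, the order of $-s$ in $\Q(\zeta_8)^\times/(\Q(\zeta_8)^\times)^4$. If $d=4$, take a lift $\sigma\in\Gal(L/\Q)$ of the automorphism $\zeta_8\mapsto\zeta_8^3$ (so $\sigma(i)=-i$) and the generator $h$ of $\Gal(L/\Q(\zeta_8))$ with $h(\theta)=i\theta$; a short computation gives $\sigma h\sigma^{-1}=h^{-1}\neq h$, so $L/\Q$ is non-abelian. Hence abelianness forces $d\le2$, equivalently $\theta^2=\sqrt{-s}\in\Q(\zeta_8)$, so the square-free part of $-s$ lies in $\{\pm1,\pm2\}$. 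If that part were $\pm2$, a standard criterion shows $x^4+s$ is irreducible over $\Q$ and $\sqrt[4]{-s}\notin\Q(\zeta_8)$ (otherwise $\Q(\zeta_8)$, being Galois over $\Q$, would contain all four roots and hence the degree-$8$ splitting field), so $L=\Q(i,\sqrt[4]{-s})$ is the splitting field of an irreducible quartic with $[L:\Q]=8$, forcing $\Gal(L/\Q)\cong D_4$, which is non-abelian. Therefore the square-free part of $-s$ is $\pm1$, i.e. $-s=\pm m^2$; since $s$ is $4$th-power-free, $m$ is square-free, and $s=\pm t^2$ with $t=|m|$.

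The main obstacle is the converse direction, and precisely the elimination of the square-free-part $\pm2$ cases: these satisfy the necessary condition $\sqrt{-s}\in\Q(\zeta_8)$ and so cannot be ruled out by a degree count alone, and one must instead recognize $L$ as the $D_4$ splitting field of $x^4+s$ to see that it is non-abelian. The two technical points requiring care are the conjugation argument excluding $d=4$ and the non-obvious non-memberships $\sqrt[4]{\pm2}\notin\Q(\zeta_8)$.
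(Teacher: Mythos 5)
Your proposal is correct and follows essentially the same route as the paper: both use Corollary~\ref{cor-E4} to identify $\Q(E[4])=\Q(\zeta_8,\sqrt[4]{-s})$ and then sort by the square-free part of $s$. The paper simply asserts the resulting four cases (including that the excluded cases give $D_4$ and $D_4\times\Z/2\Z$), whereas you verify the non-abelian cases directly; just note that your parenthetical justification that $\sqrt[4]{-s}\notin\Q(\zeta_8)$ when the square-free part of $-s$ is $\pm 2$ should be replaced by a genuine argument (e.g.\ the automorphism of $\Q(\zeta_8)$ fixing $i$ and negating $\sqrt{\pm 2}$ would have to square to something negating $\theta$), since as written it presupposes the degree you are trying to establish.
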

\begin{proof}
    If $j(E)=1728$, then $E$ has a model $y^2=x^3+sx$, for some $s\in \Z$ which can be chosen (and we choose) to be $4$-th-power free. Corollary \ref{cor-E4} implies that 
$$\Q(E[4])=\Q\left(\sqrt[4]{-s},\sqrt{-\sqrt{-s}},\sqrt{2\sqrt{-s}}\right)=\Q\left(i,\sqrt{2},\sqrt[4]{-s}\right)=\Q\left(\zeta_8,\sqrt[4]{-s}\right).$$
Then, $G=\Gal(\Q(E[4])/\Q)$ can have $4$ isomorphism types according to the value of $s$:
\begin{enumerate}
\item If $s=\pm 1$ or $\pm 4$, then $\Q(E[4])=\Q(\zeta_8)$, and $G\cong (\Z/2\Z)^2$.
\item If $s=\pm t^2$, for some square-free $t\in \Z$ with $t\notin \{\pm 1,\pm 2\}$, then $\Q(E[4])=\Q(\zeta_8,\sqrt{t})$, and $G\cong (\Z/2\Z)^3$.
\item If $s=\pm 2t^2$, for some square-free $t\in \Z$, then $G\cong D_4$, which is non-abelian.
\item Otherwise, $G\cong D_4\times \Z/2\Z$, which is also non-abelian of order $16$.
\end{enumerate}
Thus, the only values of $s$ that make $\Q(E[4])/\Q$ abelian are $s=\pm t^2$, for some other square-free integer $t$, and this concludes the proof.
\end{proof}

The following proposition will treat the image of the $2$-adic Galois representation attached to an elliptic curve $E/\Q(j_{K,f})$ with CM by $\mathcal{O}_{K,f}$, with $j_{K,f}= 0$. But we first need a lemma.

\begin{lemma}\label{lem-abelian_conditions_j0}
     Let $N\geq 2$ and let  $G\subseteq \mathcal{N}_{\delta,1}(N)$ be a subgroup (so here $\phi=1$). Let
     \[
    \gamma' \in \left\{ c_1'=\left(\begin{array}{cc} 0 & 1\\ 1 & 0\\\end{array}\right),c_{-1}'=\left(\begin{array}{cc} 0 & -1\\ -1 & 0\\\end{array}\right) \right\} = \Gamma', 
    \]
     If $\gamma', c_{\delta,1}(a,b) \in G$, for some $a,b\in \Z/N\Z$, such that $\gamma' \cdot c_{\delta,1}(a,b) = c_{\delta,1}(a,b) \cdot \gamma'$, then $b\equiv 0 \bmod N$.  
     \end{lemma}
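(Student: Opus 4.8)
The plan is to imitate the proof of Lemma \ref{abelian_conditions_j1728}, specializing to $\phi=1$ and to the two anti-diagonal matrices in $\Gamma'$, and to observe that the presence of a nonzero $\phi$ now forces the stronger conclusion $b\equiv 0 \bmod N$ rather than a disjunction. First I would fix $\varepsilon\in\{\pm 1\}$, write $\gamma'=c_\varepsilon'=\left(\begin{smallmatrix} 0 & \varepsilon\\ \varepsilon & 0\end{smallmatrix}\right)$, and expand $c_{\delta,1}(a,b)=\left(\begin{smallmatrix} a+b & b\\ \delta b & a\end{smallmatrix}\right)$, using $\phi=1$ in the top-left entry.

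Then I would compute the two products explicitly:
\begin{align*}
c_{\delta,1}(a,b)\,\gamma' &= \begin{pmatrix} a+b & b\\ \delta b & a\end{pmatrix}\begin{pmatrix} 0 & \varepsilon\\ \varepsilon & 0\end{pmatrix} = \begin{pmatrix} \varepsilon b & \varepsilon(a+b)\\ \varepsilon a & \varepsilon\delta b\end{pmatrix}, \\
\gamma'\,c_{\delta,1}(a,b) &= \begin{pmatrix} 0 & \varepsilon\\ \varepsilon & 0\end{pmatrix}\begin{pmatrix} a+b & b\\ \delta b & a\end{pmatrix} = \begin{pmatrix} \varepsilon\delta b & \varepsilon a\\ \varepsilon(a+b) & \varepsilon b\end{pmatrix}.
\end{align*}
Equating the two matrices mod $N$ and reading off the $(1,2)$ entry gives $\varepsilon(a+b)\equiv \varepsilon a \bmod N$, i.e.\ $\varepsilon b\equiv 0 \bmod N$; since $\varepsilon\in\{\pm 1\}$ is a unit, this yields $b\equiv 0 \bmod N$, as claimed. (The $(2,1)$ entry yields the same relation, while the diagonal entries only reproduce the weaker $b(\delta-1)\equiv 0$.)

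There is essentially no obstacle here, as the statement is a direct commutator computation; the only point worth flagging is the contrast with Lemma \ref{abelian_conditions_j1728}. There $\phi=0$, so the Cartan element has no $a$-contribution in its $(1,1)$ slot and the commutation relation only pins down $2b$ or $b(\delta-1)$; here the $\phi=1$ term injects $a+b$ into the $(1,1)$ entry of $c_{\delta,1}(a,b)$, and it is exactly this term that, after multiplication by the swap $\gamma'$, lands in an off-diagonal position and isolates $b$. I would verify that the computation goes through verbatim for both $\varepsilon=1$ and $\varepsilon=-1$, since the factor $\varepsilon$ is a unit and cancels uniformly in each entry.
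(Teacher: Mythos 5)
Your proof is correct and is essentially identical to the paper's: both expand the two products $c_{\delta,1}(a,b)\gamma'$ and $\gamma'c_{\delta,1}(a,b)$ for $\gamma'=c_\varepsilon'$ and read off $b\equiv 0\bmod N$ from the off-diagonal entries, where the $\phi=1$ term places $a+b$ against $a$. (The only quibble is a side remark: in Lemma \ref{abelian_conditions_j1728} with $\phi=0$ the $(1,1)$ entry of the Cartan element is $a$, so what is missing there is the $b$-contribution, not the $a$-contribution.)
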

     \begin{proof} Let $\gamma'=c_\varepsilon'$ and suppose it commutes with $c_{\delta,1}(a,b)$. Then:
           \begin{align*}
       \left(\begin{array}{cc} a+b & b \\ \delta b & a\\\end{array}\right)\left(\begin{array}{cc} 0 & \varepsilon \\ \varepsilon & 0\\\end{array}\right) &\equiv \left(\begin{array}{cc} 0 & \varepsilon \\ \varepsilon & 0\\\end{array}\right)\left(\begin{array}{cc} a+b & b \\ \delta b & a\\\end{array}\right) \bmod N, \text{ and therefore}\\
        \left(\begin{array}{cc} \varepsilon b & \varepsilon(a+b) \\ \varepsilon a & \delta \varepsilon b\\\end{array}\right) &\equiv \left(\begin{array}{cc} \delta \varepsilon b & \varepsilon a\\ \varepsilon(a+b) & \varepsilon b\\\end{array}\right) \bmod N, 
    \end{align*}
    and therefore $b\equiv 0 \bmod N$, as claimed.
     \end{proof}

\begin{prop}\label{thm-jzero-intro}\label{cor-2adicimageforj0}\label{prop-j0andell2}
    Let $E/\Q$ be an elliptic curve with $j(E)=0$, and let $c\in \GQ$ be a complex conjugation, and $\gamma=\rho_{E,2^\infty}(c)$. Let $G_{E,2^\infty}$ be the image of $\rho_{E,2^\infty}$ and let $G_{E,K,2^\infty}=\rho_{E,2^\infty}(G_{\Q(\sqrt{-3})})$.
    \begin{itemize}
        \item[(i)] If $[\cC_{-1,1}(2^\infty):G_{E,K,2^\infty}]=3$, then the image $G_{E,2^n}$ is non-abelian if and only if $n\geq 2$. Here $G_{E,2}\cong \Z/2\Z$ and $E/\Q$ can be given by a Weierstrass model of the form $y^2=x^3+s^3$ for some non-zero $s\in\Z$.
        
        \item[(ii)] If $[\cC_{-1,1}(2^\infty):G_{E,K,2^\infty}]=1$, then the image $G_{E,2^n}$ is non-abelian if and only if $n\geq 1$. Here $G_{E,2}\cong S_3$ and $E/\Q$ can be given by a Weierstrass model of the form $y^2=x^3+s$ for some non-zero integer $s\in\Z$ that is not a cube of an integer.
    \end{itemize}
\end{prop}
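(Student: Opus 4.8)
The plan is to read off the two possible shapes of $G_{E,2^\infty}$ from the classification of the $2$-adic image of a $j=0$ curve in \cite[Theorem 1.8]{lozano-galoiscm}, reduce modulo $2$ and modulo $4$ to decide abelianness, and close with a splitting-field computation that pins down the Weierstrass model. First I would record the CM data: since $j(E)=0$ the curve has CM by $\OK=\Z[(1+\sqrt{-3})/2]$ with $K=\Q(\sqrt{-3})$, $\Delta_K=-3$, and $f=1$; as $\Delta_Kf^2\equiv 1\bmod 4$ the associated constants are $\delta=-1$ and $\phi=1$, so the ambient group is $\mathcal{N}_{-1,1}(2^\infty)$ and a Cartan element has the form $c_{-1,1}(a,b)$. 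By \cite[Theorem 1.8]{lozano-galoiscm}, up to conjugation and the choice of $\gamma'\in\{c_1',c_{-1}'\}$ (with $\gamma\equiv\gamma'\bmod 4$), either $G_{E,2^\infty}=\langle\gamma',\cC_{-1,1}(2^\infty)^3\rangle$ has index $3$ in $\mathcal{N}_{-1,1}(2^\infty)$ (case (i)), where $\cC_{-1,1}(2^\infty)^3=\{c_{-1,1}(a,b):a\text{ odd},\ b\equiv 0\bmod 2\}$, or $G_{E,2^\infty}=\mathcal{N}_{-1,1}(2^\infty)$ has index $1$ (case (ii)).

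For case (i) I would reduce modulo $2$: since $\cC_{-1,1}(2)\cong(\OK/2\OK)^\times\cong\F_4^\times\cong\Z/3\Z$, its cube subgroup is trivial mod $2$, while $c_1'\equiv c_{-1}'\bmod 2$ has order $2$, so $G_{E,2}=\langle\gamma'\bmod 2\rangle\cong\Z/2\Z$ is abelian. To obtain non-abelianness for $n\geq 2$ it suffices to treat $G_{E,4}$, because if $G_{E,2^n}$ were abelian for some $n\geq 2$ then its quotient $G_{E,4}$ would be abelian too. I would exhibit $c_{-1,1}(1,2)\in G_{E,4}$ (it lies in the cube subgroup since $a=1$ is odd and $b=2$ is even, and its norm $a^2+ab+b^2=7$ is a unit mod $4$); since $b=2\not\equiv 0\bmod 4$, Lemma \ref{lem-abelian_conditions_j0} shows $c_{-1,1}(1,2)$ does not commute with $\gamma'$, so $G_{E,4}$, and hence $G_{E,2^n}$ for every $n\geq 2$, is non-abelian.

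For case (ii) I would simply reduce $\mathcal{N}_{-1,1}(2^\infty)$ modulo $2$ to obtain $G_{E,2}\cong\mathcal{N}_{-1,1}(2)$. Since $\delta\equiv\phi\equiv 1\bmod 2$, Lemma \ref{lem-abelianincase2} gives $\mathcal{N}_{-1,1}(2)\cong S_3$, which is already non-abelian; hence $G_{E,2^n}$ is non-abelian for all $n\geq 1$.

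It remains to identify the Weierstrass models, which I expect to be the main obstacle, since this is where the abstract index data must be translated into the arithmetic of the defining equation. Every $j=0$ curve over $\Q$ has a model $y^2=x^3+s$ with nonzero $s\in\Z$, well defined up to sixth powers, and $\Q(E[2])$ is the splitting field of $x^3+s$; a direct computation shows $\Gal(\Q(E[2])/\Q)\cong\Z/2\Z$ exactly when $s$ is a perfect cube in $\Q$ (equivalently in $\Z$, a condition invariant under sixth-power twists) and $\cong S_3$ otherwise. Matching with the reductions above, case (i) forces $G_{E,2}\cong\Z/2\Z$ and hence $s=t^3$ for some nonzero $t\in\Z$, giving the model $y^2=x^3+t^3$; case (ii) forces $G_{E,2}\cong S_3$ and hence $s$ a non-cube, giving $y^2=x^3+s$ with $s$ not an integer cube. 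The care needed in this last step is precisely to verify that the two image classes of \cite[Theorem 1.8]{lozano-galoiscm} are separated by the isomorphism type of $G_{E,2}$, so that the index $3$ versus index $1$ dichotomy corresponds exactly to the cube versus non-cube dichotomy for $s$; the mod-$2$ and mod-$4$ matrix verifications themselves are routine.
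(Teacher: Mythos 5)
Your proposal is correct and follows essentially the same route as the paper: read the two possible $2$-adic images off \cite[Theorem 1.8]{lozano-galoiscm}, get $G_{E,2}\cong\Z/2\Z$ in the index-$3$ case and $G_{E,2}\cong\mathcal{N}_{-1,1}(2)\cong S_3$ in the index-$1$ case, and kill abelianness for $n\geq 2$ in case (i) by exhibiting $c_{-1,1}(1,2)\in G_{E,4}$ with $b\not\equiv 0\bmod 4$ via Lemma \ref{lem-abelian_conditions_j0} (the paper uses the generator $c_{-1,1}(-3,6)\equiv c_{-1,1}(1,2)\bmod 4$). The only divergence is cosmetic: for the Weierstrass models the paper cites \cite[Prop.~1.15.(iv)]{zywina}, whereas you derive the cube/non-cube dichotomy directly from $\Q(E[2])$ being the splitting field of $x^3+s$; your argument is self-contained and equally valid.
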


\begin{proof}

 Let $E/\Q$ be an elliptic curve with $j(E)=0$. Thus, $E$ has CM by $\Of$, where $K=\Q(\sqrt{-3})$. Since $\Delta_K=-3$ and $f=1$, we set $\delta =-1$ and $\phi=1$. The image of $\rho_{E,2^\infty}$ is given in this case by \cite[Theorem 1.8]{lozano-galoiscm} and tells us that the image $G_{E,2^\infty} = \langle c_{\gamma'}, G_{E,K,2^\infty}\rangle$ where
    $$\gamma'\in \left\{ \left(\begin{array}{cc} 0 & 1\\ 1 & 0\\\end{array}\right),\left(\begin{array}{cc} 0 & -1\\ -1 & 0\\\end{array}\right)\right\},$$ such that $\gamma\equiv \gamma'\bmod 4$. 
    \begin{itemize}
        \item[(i)] If $[\cC_{-1,1}(2^\infty):G_{E,K,2^\infty}]=3$, then the image is
        \[
            G_{E,2^\infty} =\left\langle \gamma', -\operatorname{Id}, \left(\begin{array}{cc} 7 & 4\\ -4 & 3\\\end{array}\right), \left(\begin{array}{cc} 3 & 6\\ -6 & -3\\\end{array}\right)\right\rangle.
        \]
        First note that $G_{E,2}\equiv \{\gamma'\}\cong \Z/2\Z$ is abelian.  Now consider the image $G_{E,4}$. Since $c_{-1,1}(-3,6) \equiv c_{-1,1}(1,2) \bmod 4$ belongs to $G_{E,4}$ and $b\equiv 6\not\equiv 0 \bmod 4$, Lemma \ref{lem-abelian_conditions_j0} implies that $G_{E,4}$ must be non-abelian. Therefore, the image $G_{E,2^n}$ is non-abelian for $n\geq 2$.

        \item[(ii)] If $[\cC_{-1,1}(2^\infty):G_{E,K,2^\infty}]=1$, then the image is 
        \[
            G_{E,2^\infty} =\mathcal{N}_{-1,1}(2^\infty)=\left\langle \gamma', -\operatorname{Id}, \left(\begin{array}{cc} 7 & 4\\ -4 & 3\\\end{array}\right), \left(\begin{array}{cc} 2 & 1\\ -1 & 1\\\end{array}\right)\right\rangle.
        \]
        Observe that $G_{E,2^n}=\mathcal{N}_{-1,1}(2^n)$ for all $n\geq 1$, and $(\delta,\phi)\equiv (1,1)\bmod 2$. By Theorem \ref{thm-fullnormalizerabelian}, the image $G_{E,2^n}$ is non-abelian for all $n\geq 1$. 
    \end{itemize}
    The Weierstrass models for each case follow from Proposition 1.15.(iv) of \cite{zywina}. This concludes the proof.
\end{proof}

To end this section, we summarize our findings in one statement.

\begin{theorem}\label{thm-primepowercase}
    Let $E/\mathbb{Q}(j_{K,f})$ be an elliptic curve with CM by $\mathcal{O}_{K,f}$, $f\geq 1$, let $\ell$ be a prime, and let $n\geq 1$ such that the image $G_{E,\ell^n}$ of $\rho_{E,\ell^n}$ is abelian. Then, $\ell^n=2$, $3$, or $4$. 
\end{theorem}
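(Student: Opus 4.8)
The plan is to treat Theorem~\ref{thm-primepowercase} as a consolidation of the prime-power results established throughout this section. The one structural observation needed at the outset is that abelianness descends along reduction: for $m\leq n$ the reduction-modulo-$\ell^m$ homomorphism carries $G_{E,\ell^n}$ onto $G_{E,\ell^m}$, so if $G_{E,\ell^n}$ is abelian then so is every $G_{E,\ell^m}$ with $m\leq n$. Contrapositively, exhibiting a single level $m\leq n$ at which the image is non-abelian forces $G_{E,\ell^n}$ to be non-abelian as well. With this reduction in hand, I would split into the cases $\ell$ odd and $\ell=2$, and within each case subdivide according to the CM invariant ($j_{K,f}=0$, $j_{K,f}=1728$, or neither), matching each possibility to the proposition that governs it.

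For $\ell$ an odd prime I would first partition according to whether $\ell\mid f\Delta_K$. If $\ell\nmid f\Delta_K$ and $j_{K,f}\neq 0$, then $\ell\nmid 2\Delta_K f$ (since $\ell$ is odd), so Proposition~\ref{prop-goodrednprimes} shows $G_{E,\ell}$ is non-abelian; if instead $j_{K,f}=0$, then $f\Delta_K=-3$ forces $\ell\geq 5$, and Proposition~\ref{prop-jzero-goodredn} again gives $G_{E,\ell}$ non-abelian. If $\ell\mid f\Delta_K$, then Proposition~\ref{prop-abelianforp3} applies: it shows $G_{E,\ell^n}$ is non-abelian for all $n\geq 2$, and that $G_{E,\ell}$ can be abelian only when $j_{K,f}=0$ and $\ell=3$. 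Hence the only odd prime power for which an abelian image can occur is $\ell^n=3$.

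For $\ell=2$ I would subdivide by the $j$-invariant. When $j_{K,f}\neq 0,1728$, Proposition~\ref{prop-jnot01728andell2} shows $G_{E,2^n}$ is non-abelian for $n\geq 2$, leaving only $2^n=2$. When $j(E)=0$, Proposition~\ref{prop-j0andell2} likewise shows $G_{E,2^n}$ is non-abelian for $n\geq 2$ (indeed for all $n\geq 1$ in one of its two sub-cases), again leaving only $2^n=2$. When $j(E)=1728$, Proposition~\ref{prop-j1728andell2} shows $G_{E,2^n}$ is non-abelian precisely for $n\geq 3$, so here $2^n\in\{2,4\}$. Taking the union over the three sub-cases yields $2^n\in\{2,4\}$, and combining this with the odd-prime analysis gives $\ell^n\in\{2,3,4\}$, as claimed.

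Since each individual assertion is already available from an earlier proposition, the argument is essentially bookkeeping, and the main point requiring care---the genuine ``obstacle''---is verifying that the earlier propositions jointly exhaust every pair $(\ell,\mathcal{O}_{K,f})$, with no configuration slipping through that might harbor an unconsidered abelian image. Concretely, one must confirm that the trichotomy $j_{K,f}=0$ / $j_{K,f}=1728$ / otherwise, crossed with the dichotomy $\ell\mid f\Delta_K$ or not, is genuinely complete, and the boundary cases should be checked explicitly: $j_{K,f}=1728$ has $f\Delta_K=-4$, so no odd prime divides $f\Delta_K$ and the odd-$\ell$ analysis routes entirely through Proposition~\ref{prop-goodrednprimes}; whereas $j_{K,f}=0$ has $f\Delta_K=-3$, so the only odd $\ell$ dividing $f\Delta_K$ is $\ell=3$, which is precisely the single exceptional case isolated above.
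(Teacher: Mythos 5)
Your proposal is correct and follows essentially the same route as the paper: both arguments reduce to the same case division (odd $\ell$ split by whether $\ell \mid f\Delta_K$, then $\ell=2$ split by $j$-invariant) and invoke Propositions \ref{prop-goodrednprimes}, \ref{prop-jzero-goodredn}, \ref{prop-abelianforp3}, \ref{prop-jnot01728andell2}, \ref{prop-j1728andell2}, and \ref{prop-j0andell2} in the same way, with abelianness descending along the surjective reduction maps. The only cosmetic difference is that the paper first disposes of the case $G_{E,\ell^n}\cong\mathcal{N}_{\delta,\phi}(\Z/\ell^n\Z)$ via Theorem \ref{thm-fullnormalizerabelian}, a step your bookkeeping absorbs into the individual propositions.
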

\begin{proof}
    Let $E/\Q(j_{K,f})$ be an elliptic curve as in the statement, and suppose that $G_{E,\ell^n}$ is abelian. By Theorem \ref{thm-cmrep-intro-alvaro}, there is a $\Z/\ell^n\Z$-basis of $E[\ell^n]$ such that the image $G_{E,\ell^n}$ of $\rho_{E,\ell^n}$ is a subgroup of $\mathcal{N}_{\delta,\phi}(\Z/\ell^n\Z)$. By Theorem    \ref{thm-fullnormalizerabelian}, if $G_{E,\ell^n}\cong \mathcal{N}_{\delta,\phi}(\Z/\ell^n\Z)$, then $\ell^n=2$. So we may assume that $G_{E,\ell^n}$ is a proper subgroup of $ \mathcal{N}_{\delta,\phi}(\Z/\ell^n\Z)$.

    Further, if $\ell$ is not a divisor of $2\Delta_K f$ and $j_{K,f}\neq 0$, then Prop. \ref{prop-goodrednprimes} shows that $G_{E,p}$ is non-abelian. If $j_{K,f}=0$ and $\ell$ does not divide $2\Delta_K f=-6$, then Prop. \ref{prop-jzero-goodredn} shows that $G_{E,\ell}$ is non-abelian. Thus, we may assume that $\ell$ is a divisor of $2\Delta_K f$. 

    If $\ell>2$ is a divisor of $\Delta_K f$, and $G_{E,\ell^n}$ is abelian, then Prop. \ref{prop-abelianforp3} shows that $j_{K,f}=0$ and $\ell^n=3$, and $G_{E,9}$ is never abelian.

    It remains to treat the case of $\ell=2$. If $j_{K,f}\neq 0,1728$ and $G_{E,2^n}$ is abelian, then Prop. \ref{prop-jnot01728andell2} shows that $2^n=2$ and $G_{E,4}$ is never abelian. If $j_{K,f}=1728$, then Prop. \ref{prop-j1728andell2} shows that $2^n\leq 4$. And if $j_{K,f}=0$, then Prop. \ref{prop-j0andell2} shows that $2^n=2$.

    Hence, in all cases $\ell^n=2$, $3$, or $4$, and this concludes the proof.
\end{proof}


\section{Proof that the image $G_{E,N}$  is non-abelian for $N\geq 5$}\label{proof-thm-main1}\label{sec-proof-main-theorem1}

In Section \ref{applying_results}, we have shown that if the image $G_{E,N}$ is abelian for $N=\ell^n$, for some prime $\ell$, then $N=2,3$, or $4$. Here we shall show that $G_{E,N}$ is non-abelian for all $N\geq 5$. We shall begin by showing that $G_{E,N}$ is non-abelian for $N=6$.

\begin{prop}\label{n=6}
    Let $E/\mathbb{Q}(j_{K,f})$ be an elliptic curve with CM by $\mathcal{O}_{K,f}$, $f\geq 1$. Then, the image $G_{E,6}$ is non-abelian. 
\end{prop}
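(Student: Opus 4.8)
The plan is to reduce the claim to the prime-power cases already established in Section~\ref{applying_results}. Since $6=2\cdot 3$, the Chinese Remainder Theorem gives a natural identification $\mathcal{N}_{\delta,\phi}(6)\cong \mathcal{N}_{\delta,\phi}(2)\times \mathcal{N}_{\delta,\phi}(3)$, and correspondingly the reduction maps induce $G_{E,6}\hookrightarrow G_{E,2}\times G_{E,3}$ via $\pi_2\colon G_{E,6}\to G_{E,2}$ and $\pi_3\colon G_{E,6}\to G_{E,3}$. First I would observe that if $G_{E,6}$ were abelian, then each quotient image $G_{E,2}=\pi_2(G_{E,6})$ and $G_{E,3}=\pi_3(G_{E,6})$ would also be abelian, since a quotient of an abelian group is abelian.

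Next I would invoke Theorem~\ref{thm-primepowercase} (or directly Propositions~\ref{prop-abelianforp3}, \ref{prop-jnot01728andell2}, \ref{prop-j1728andell2}, and \ref{prop-j0andell2}) to pin down the $j$-invariant forced by each abelian constraint. If $G_{E,3}$ is abelian, then Proposition~\ref{prop-abelianforp3} forces $j_{K,f}=0$. On the other hand, if $G_{E,2}$ is abelian for a curve with $j_{K,f}=0$, I would check what that requires: for $j_{K,f}=0$ we have $\Delta_K=-3$, $f=1$, so $\Delta_K f^2\equiv 1\bmod 4$ with $\delta=-1$ and $\phi=1$, giving $(\delta,\phi)\equiv(1,1)\bmod 2$. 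By Lemma~\ref{lem-abelianincase2} (or Theorem~\ref{thm-fullnormalizerabelian}), in this case $\mathcal{N}_{\delta,\phi}(2)\cong S_3$ is non-abelian, and indeed Proposition~\ref{prop-j0andell2} records that for $j(E)=0$ the image $G_{E,2}$ is non-abelian (isomorphic to $S_3$ in case (ii), and $\Z/2\Z$ only in case (i) — but I must confirm which occurs).

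The cleanest route is to exploit the incompatibility of $j$-invariants directly: abelianness of $G_{E,6}$ would force $G_{E,3}$ abelian, hence $j_{K,f}=0$ by Proposition~\ref{prop-abelianforp3}; but $G_{E,3}$ abelian with $j_{K,f}=0$ means (cases (1),(2) of Proposition~\ref{prop-abelianforp3}) that $E/\Q$ has the form $y^2=x^3+d$ with $-4d$ a cube. For such a curve Proposition~\ref{prop-j0andell2} governs the $2$-adic image. I expect the main obstacle to be ruling out the seemingly favorable subcase where $G_{E,2}\cong\Z/2\Z$ is abelian (Proposition~\ref{prop-j0andell2}(i)): here both $G_{E,2}$ and $G_{E,3}$ can individually be abelian, so a purely $j$-invariant argument does not immediately finish the job, and I must argue that $G_{E,6}$ itself is still non-abelian. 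To handle this, I would not merely use that the two factor images are abelian, but rather locate a single commuting obstruction inside $\mathcal{N}_{\delta,\phi}(6)$: by Remark~\ref{rem-surjectivity} and the explicit generators from Proposition~\ref{prop-j0andell2}(i), I would exhibit two elements of $G_{E,6}$ whose reductions show they fail to commute. Concretely, using $\phi=1$, I would produce a Cartan element $c_{\delta,1}(a,b)\in G_{E,6}$ with $b$ a unit modulo $3$ together with $c_1'\in G_{E,6}$, and invoke Lemma~\ref{lem-abelian_conditions_j0} (which forces $b\equiv 0\bmod 3$ for commutativity) to derive a contradiction modulo $3$.

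Thus the key steps, in order, are: (1) split $G_{E,6}$ via CRT and reduce to the two prime-power images being abelian quotients; (2) apply Proposition~\ref{prop-abelianforp3} to force $j_{K,f}=0$; (3) in the delicate case where both $G_{E,2}$ and $G_{E,3}$ are separately abelian, lift back to $N=6$ and use Lemma~\ref{lem-abelian_conditions_j0} together with Remark~\ref{rem-surjectivity} to exhibit non-commuting elements, yielding the contradiction. The main difficulty is precisely this last step, since the factorwise-abelian subcase is not excluded by $j$-invariant considerations alone and requires examining the genuine mod-$6$ image.
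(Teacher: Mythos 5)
Your setup is right and matches the paper's: reduce mod $2$ and mod $3$, use Proposition~\ref{prop-abelianforp3} to force $j_{K,f}=0$ with $E:y^2=x^3+d$ and $-4d$ a cube, and observe that the remaining danger is the subcase where $G_{E,2}$ is also abelian. But your proposed resolution of that subcase cannot work. By the Chinese Remainder Theorem, $G_{E,6}$ embeds into $G_{E,2}\times G_{E,3}$, and two elements of $\GL(2,\Z/6\Z)$ commute if and only if their reductions mod $2$ and mod $3$ both commute. Hence if $G_{E,2}$ and $G_{E,3}$ are both abelian, then $G_{E,6}$ is automatically abelian: there is no ``genuine mod-$6$ obstruction'' left to find. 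In particular, you cannot produce a Cartan element $c_{\delta,1}(a,b)\in G_{E,6}$ with $b$ a unit mod $3$ failing to commute with $c_1'$, because its mod-$3$ reduction would already witness non-abelianness of $G_{E,3}$, contradicting the hypothesis of the subcase you are in. The plan to ``lift back to $N=6$'' and apply Lemma~\ref{lem-abelian_conditions_j0} is therefore a dead end.

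The missing idea is arithmetic, not group-theoretic, and you actually quote both halves of it without combining them. Proposition~\ref{prop-j0andell2}(i) says that $G_{E,2}$ abelian with $j(E)=0$ forces a model $y^2=x^3+s^3$, i.e.\ $d$ is a cube; Proposition~\ref{prop-abelianforp3} says that $G_{E,3}$ abelian forces $-4d$ to be a cube. If both held, $-4$ would be a cube in $\Z$, which is absurd. This is exactly how the paper concludes: the two Weierstrass-model conditions are incompatible, so $G_{E,2}$ and $G_{E,3}$ can never be simultaneously abelian, and hence $G_{E,6}$ is never abelian.
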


\begin{proof}
    Recall that $\mathcal{N}_{\delta,\phi}(6)$ is never abelian by Theorem \ref{abelian_n=2}. Suppose there exists an elliptic curve $E/\mathbb{Q}(j_{K,f})$ with CM by $\mathcal{O}_{K,f}$, $f\geq 1$, such that the image $G_{E,6}\subsetneq \mathcal{N}_{\delta,\phi}(6)$ is abelian. If we reduce the image $G_{E,6} \bmod 3$, then we get the mod-$3$ image $G_{E,3}$, which therefore must also be abelian, and similarly $G_{E,2} \equiv G_{E,6} \bmod 2$ must be abelian as well. If the image $G_{E,N}$ is abelian for $N=3$, then Theorem \ref{thm-primepowercase} (more concretely Prop. \ref{cor-abelianforp3}) tells us that $j_{K,f}=0$, so let us assume that $j_{K,f}=0$. Moreover, Prop. \ref{cor-abelianforp3} shows that if $j_{K,f}=0$ and $G_{E,3}$ is abelian, then we must have $E/\Q$ given by $y^2=x^3+d$ with $d$ a non-zero integer such that $-4d$ is a perfect cube.

    Since $G_{E,2}$ must be abelian, Prop. \ref{cor-2adicimageforj0} shows that the mod-$2$ image of $E$ must have index $3$ in $\mathcal{N}_{\delta,\phi}(2)$, and $E$ is given by $y^2=x^3+d$ for some non-zero integer $d$ which is a perfect cube. However, if $d$ is a cube, and $-4d$ is also a cube, it would follow that $-4$ is a cube in $\Z$, so we have reached a contradiction.  Hence $G_{E,6}$ cannot be abelian.
\end{proof}

Now we will show that $G_{E,N}$ is non-abelian for all $N\geq 5$ and complete the proof of  Theorem \ref{thm-main}.

\begin{proof}[Proof of Theorem \ref{thm-main}]
    Let $N\geq 5$, let $G_{E,N}$ be the image of $\rho_{E,N}$, and suppose for a contradiction that $G_{E,N}$ is abelian. Let $\ell$ be a prime divisor of $N$, and let $n=\nu_\ell(N)$ be the $\ell$-adic valuation of $N$, so that $\ell^n$ is the highest power of $\ell$ that divides $N$. In particular, $G_{E,\ell^n}$ is also abelian. Theorem \ref{thm-primepowercase} shows that if $G_{E,\ell^n}$ is abelian, then $\ell^n=2,3,$ or $4$. Thus,
    \begin{itemize}
        \item[(a)] $\ell = 2$ or $3$,
        \item[(b)] if $\ell=3$, then $n=1$, and
        \item[(c)] if $\ell=2$ then, $1\leq n \leq 2$,
    \end{itemize}
    Since $N\geq 5$, it follows that $N=6$ or $12$. However, if $G_{E,12}$ is abelian, then so is $G_{E,6}$ because $6$ is a divisor of $12$ and there is a natural projection map $\mathcal{N}_{\delta,\phi}(\Z/12\Z)\to \mathcal{N}_{\delta,\phi}(\Z/6\Z)$. But Prop. \ref{n=6} shows that $G_{E,N}$ is non-abelian when $N=6$. Therefore, $G_{E,N}$ is non-abelian for any $N$ divisible by 6, and $G_{E,N}$ cannot be abelian for any $N\geq 5$. 

    About the specific conditions for abelian image when $N=2$, $3$, or $4$: 
    \begin{enumerate}
        \item If $N=2$ and $j_{K,f}\neq 0,1728$, then Prop. \ref{prop-jnot01728andell2} shows that $G_{E,2}$ is abelian if and only if $G_{E,2}\cong \Z/2\Z$ and either
      \begin{enumerate}
            \item $\Delta_Kf^2\equiv 0 \bmod 4$, or 
            \item  $\Delta_K\equiv 1 \bmod 8$ and $f\equiv 1 \bmod 2$.
        \end{enumerate}
        If $j_{K,f}=1728$, then $G_{E,2}$ is always abelian (see Prop. \ref{prop-j1728andell2}) with $G_{E,2}\cong \Z/2\Z$ or $\{0\}$ according to whether $E$ is given by $y^2=x^3-dx$ with $d$ a non-square or a square in $\Z$. Prop. \ref{prop-j0andell2} shows that if $j_{K,f}=0$, then $G_{E,2}$ is abelian if and only if $E/\Q$ is given by a form $y^2=x^3+d$ with $d$ a cube in $\Z$.

        \item If $N=3$, then Prop. \ref{prop-abelianforp3} shows that the only possibility is that $j_{K,f}=0$ and $E/\Q$ is given by $y^2=x^3+d$ such that $-4d$ is a cube in $\Z$.

        \item Finally, if $N=4$, then Prop. \ref{prop-j1728andell2} shows that $j_{K,f}=1728$, the index $[\mathcal{N}_{-1,0}(\Z_{2^\infty}):G_{E,2^\infty}]$ is $2$ or $4$, and Lemma \ref{lem-4abelianj1728} describes the Weierstrass equations of elliptic curves with $j_{K,f}=1728$ and abelian $4$-division field.
    \end{enumerate}
This concludes the proof of the main Theorem \ref{thm-main}.
\end{proof}



\section{Cyclotomic division fields}\label{proof-thm-main2}\label{sec-proof-main-thm2}

In this section we prove Theorem \ref{thm-main2}, i.e., we classify elliptic curves with CM that are defined over their minimal field of definition and the $N$-division field coincides with the $N$-th cyclotomic extension of $\Q(j_{K,f})$.

\begin{theorem}
    Let $E/\mathbb{Q}(j_{K,f})$ be an elliptic curve with CM by $\mathcal{O}_{K,f}$, $f \geq 1$, and let $N\geq 2$. Then, $\mathbb{Q}(j_{K,f}, E[N]) = \mathbb{Q}(j_{K,f},\zeta_N)$ if and only if 
    \begin{enumerate}
        \item $N=2$, and $j_{K,f}=1728$ with $E/\Q$ given by a form $y^2=x^3-d^2x$ for some integer $d$, or
        \item $N=3$, and $j_{K,f}=0$ with $E/\Q$ given by a form $y^2=x^3+d$ for some integer $d$ such that $d$ or $-3d$ is a square and $-4d$ is a cube. 
    \end{enumerate}
\end{theorem}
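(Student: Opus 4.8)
The plan is to reduce everything to Theorem~\ref{thm-main1} by means of a single structural observation about the determinant. Recall that the Weil pairing gives $\Q(j_{K,f},\zeta_N)\subseteq \Q(j_{K,f},E[N])$ for every $N$, and that the determinant of the mod-$N$ representation is precisely the mod-$N$ cyclotomic character. Hence the map $\det\colon G_{E,N}\to (\Z/N\Z)^\times$ has image $\Gal(\Q(j_{K,f},\zeta_N)/\Q(j_{K,f}))$, and its kernel is the fixing group of $\Q(j_{K,f},\zeta_N)$ inside $G_{E,N}$. I would record this as the working criterion
\[
\Q(j_{K,f},E[N])=\Q(j_{K,f},\zeta_N)\iff \det\colon G_{E,N}\to(\Z/N\Z)^\times \text{ is injective.}
\]
In particular, an equality forces $G_{E,N}$ to embed into the abelian group $(\Z/N\Z)^\times$, so $G_{E,N}$ is abelian; by Theorem~\ref{thm-main1} this already restricts us to $N\in\{2,3,4\}$ with $G_{E,N}\cong(\Z/2\Z)^k$.

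Next I would run the injectivity test for $N=2,3,4$ against the explicit groups of Theorem~\ref{thm-main1}. For $N=4$ we have $(\Z/4\Z)^\times\cong\Z/2\Z$, whereas Theorem~\ref{thm-main1}(3) gives $|G_{E,4}|\geq 4$ whenever $G_{E,4}$ is abelian; so $\det$ cannot be injective and $N=4$ contributes nothing. For $N=2$ the group $(\Z/2\Z)^\times$ is trivial, so injectivity means $G_{E,2}=\{1\}$, i.e. $E$ acquires full $2$-torsion over $\Q(j_{K,f})$. Scanning Theorem~\ref{thm-main1}(1), the trivial group occurs only in case (1)(b), namely $j_{K,f}=1728$ with $E\colon y^2=x^3-dx$ and $d$ a square, which is exactly the form $y^2=x^3-d^2x$ of case (1) of the statement. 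For $N=3$ we again have $(\Z/3\Z)^\times\cong\Z/2\Z$, so injectivity forces $|G_{E,3}|\leq 2$; by Theorem~\ref{thm-main1}(2) the only abelian possibility of order at most $2$ is $G_{E,3}\cong\Z/2\Z$, arising for $j(E)=0$, $E\colon y^2=x^3+d$, with $-4d$ a cube and $d$ or $-3d$ a square, matching case (2).

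The one genuinely substantive step I expect is confirming, in the $N=3$ case with $G_{E,3}\cong\Z/2\Z$, that $\det$ really is injective rather than collapsing to a nontrivial kernel of order $2$. Here I would invoke that the cyclotomic factor is already nontrivial: since $j(E)=0$ we have $\Q(j_{K,f})=\Q$, and $\Q(\zeta_3)=\Q(\sqrt{-3})\neq\Q$, so $\Gal(\Q(\zeta_3)/\Q)$ has order $2$. As this group is the image of $\det$ on $G_{E,3}$ and $|G_{E,3}|=2$, the map $\det$ is an isomorphism and $\Q(E[3])=\Q(\zeta_3)$. The analogous point is vacuous for $N=2$, where the cyclotomic field equals the base field and $G_{E,2}$ is trivial. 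Everything else is bookkeeping layered on top of Theorem~\ref{thm-main1}, since the hard classification of abelian division fields has already been completed there.
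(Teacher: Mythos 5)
Your proposal is correct and follows essentially the same route as the paper: both reduce to Theorem \ref{thm-main1} to force $N\in\{2,3,4\}$ and then compare the size of $G_{E,N}$ with the degree $[\Q(j_{K,f},\zeta_N):\Q(j_{K,f})]$ in each case, your ``$\det$ is injective'' criterion being just a repackaging of that degree comparison. The only cosmetic difference is in the $N=3$ endgame, where you use that $\det$ realizes the cyclotomic character while the paper cites the containment $\Q(\sqrt{-3})\subseteq\Q(E[3])$; both yield the same order-$2$ count.
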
 

\begin{proof}
    Let $E/\Q(j_{K,f})$ with CM, and suppose $N\geq 2$ such that $\Q(j_{K,f},E[N]) = \Q(j_{K,f},\zeta_N)$. In particular, $\Q(j_{K,f},E[N])/\Q(j_{K,f})$ is abelian and so is $G_{E,N}$, and therefore, by Theorem \ref{thm-main}, we must have $N=2$, $3$, or $4$.
    \begin{enumerate}
        \item If $N=2$, then $\Q(j_{K,f},\zeta_N)=\Q(j_{K,f})$ and therefore $G_{E,2}$ is trivial. By Theorem \ref{thm-main}, this can only occur when $j_{K,f}=1728$, and $E/\Q$ is given by a model $y^2=x^3-d^2x$ for some non-zero integer $d$.

        \item If $N=3$, then $\Q(j_{K,f},\zeta_N)=\Q(j_{K,f},\sqrt{-3})$ is a trivial or quadratic extension of $\Q(j_{K,f})$.  Prop. \ref{prop-abelianforp3} shows that the only possibility is that $j(E)=0$ and $E/\Q$ is given by $y^2=x^3+d$ such that $-4d$ is a cube in $\Z$. Moreover, $G_{E,3}\cong \Z/2\Z$ occurs precisely whenever $d$ or $-3d$ is a square and $-4d$ is a cube. Finally, since $K=\Q(\sqrt{-3})\subseteq \Q(E[3])$ (by \cite[Lemma 2.2]{lozano-galoiscm}), it follows that $\Q(E[3])=K=\Q(\sqrt{-3})=\Q(\zeta_3)$ in such case.

        \item If $N=4$, then $\Q(j_{K,f},\zeta_N)=\Q(j_{K,f},i)$ is a trivial or quadratic extension of $\Q(j_{K,f})$. However, Theorem \ref{thm-main} and Prop. \ref{prop-j1728andell2} show that if $G_{E,4}$ is abelian, then $G_{E,4}\cong (\Z/2\Z)^2$ or $(\Z/2\Z)^3$, and it is never trivial or isomorphic to $\Z/2\Z.$ Hence, $\Q(j_{K,f},E[4])=\Q(j_{K,f},\zeta_4)$ is impossible.
    \end{enumerate}
    This concludes the proof of the theorem.
\end{proof}


\bibliography{bibliography}
\bibliographystyle{plain}


\end{document}